\theoremstyle{definition}
\newtheorem{theo}{Theorem}[section]
\newtheorem{defi}[theo]{Definition}
\newtheorem{prop}[theo]{Proposition}
\newtheorem{cor}[theo]{Corollary}
\newtheorem{lemma}[theo]{Lemma}
\newtheorem{exa}[theo]{Example}
\newtheorem{rem}[theo]{Remark}
\numberwithin{equation}{section}
\newcommand{\N}{{\mathbb N}}
\newcommand{\F}{{\mathbb F}}
\newcommand{\cC}{{\mathcal C}}
\newcommand{\cF}{{\mathcal F}}
\newcommand{\cM}{{\mathcal M}}
\newcommand{\cT}{{\mathcal T}}
\newcommand{\cL}{{\mathcal L}}
\newcommand{\cS}{{\mathcal S}}
\newcommand{\cN}{{\mathcal N}}
\newcommand{\cU}{{\mathcal U}}
\newcommand{\cV}{{\mathcal V}}
\newcommand{\qM}{$q$-matroid}
\newcommand{\T}{\mbox{$^{\sf T}$}}
\newcommand{\subspace}[1]{\mbox{$\langle{#1}\rangle$}}
\newcommand{\Aut}{\mbox{\rm Aut}}
\newcommand{\CovFP}{\mbox{${\rm Cov}_{{\mathcal F}'}$}}
\newcommand{\rk}{{\rm rk}}
\newcommand{\im}{\mbox{${\rm im}\,$}}
\newcommand{\GL}{{\rm GL}}
\newcommand{\rowsp}{\mbox{\rm rs}}
\newcommand{\colsp}{\mbox{\rm cs}}
\newcommand{\diag}{\mbox{\rm diag}}
\newcommand{\id}{{\rm id}}
\newcommand{\CC}{\textbf{C}}
\newcommand{\qMat}{$q$-\textbf{Mat}}
\newcommand{\qMats}{$q$-\textbf{Mat}$^{\sf s}$}
\newcommand{\qMatw}{$q$-\textbf{Mat}$^{\sf w}$}
\newcommand{\qMatrp}{$q$-\textbf{Mat}$^{\sf rp}$}
\newcommand{\qMatls}{$q$-\textbf{Mat}$^{\mbox{\scriptsize\sf l-s}}$}
\newcommand{\qMatlw}{$q$-\textbf{Mat}$^{\mbox{\scriptsize\sf l-w}}$}
\newcommand{\qMatlrp}{$q$-\textbf{Mat}$^{\mbox{\scriptsize\sf l-rp}}$}
\newcommand{\qMatd}{$q$-\textbf{Mat}$^{\Delta}$}
\newcounter{alp}
\newcounter{ara}
\newcounter{rom}
\newenvironment{alphalist}{\begin{list}{(\alph{alp})\hfill}{\usecounter{alp}
     \topsep-1.4ex \labelwidth.7cm \leftmargin.7cm \labelsep0cm
     \rightmargin0cm \parsep0ex \itemsep.5ex
     \partopsep-1.4ex}}{\end{list}}
\newenvironment{arabiclist}{\begin{list}{(\arabic{ara})\hfill}{\usecounter{ara}
     \topsep-1ex \labelwidth.7cm \leftmargin.7cm \labelsep0cm
     \rightmargin0cm \parsep0ex \itemsep.5ex
     \partopsep-1ex}}{\end{list}}
\newenvironment{Flist}{\begin{list}{(F\arabic{ara})\hfill}{\usecounter{ara}
     \topsep-1ex \labelwidth.8cm \leftmargin.8cm \labelsep0cm
     \rightmargin0cm \parsep0ex \itemsep.5ex
     \partopsep-1ex}}{\end{list}}
\newenvironment{mylist2}{\begin{list}{(\arabic{ara})\hfill}{\usecounter{ara}
     \topsep-1ex \labelwidth.88cm \leftmargin.88cm \labelsep0cm
     \rightmargin0cm \parsep0ex \itemsep.5ex
     \partopsep-1ex}}{\end{list}}
\let\@fnsymbol\@arabic
\begin{document}

\title{Coproducts in Categories of $q$-Matroids}
\author{Heide Gluesing-Luerssen\thanks{Department of Mathematics, University of Kentucky, Lexington KY 40506-0027, USA; heide.gl@uky.edu.}\quad and Benjamin Jany\thanks{Department of Mathematics, University of Kentucky, Lexington KY 40506-0027, USA; benjamin.jany@uky.edu.}}

\date{March 8, 2023}
\maketitle
	
\begin{abstract}
\noindent $q$-Matroids form the $q$-analogue of classical matroids.
In this paper we introduce various types of maps between \qM{}s.
These maps are not necessarily linear, but they map subspaces to subspaces and respect the \qM{} structure in certain ways.
The various types of maps give rise to different categories of \qM{}s. We show that only one of these categories possesses a coproduct.
This is the category where the morphisms are linear weak maps, that is,  the rank of the image of any subspace
is not larger than the rank of the subspace itself.
The coproduct in this category is the very recently introduced direct sum of \qM{}s.
\end{abstract}

\textbf{Keywords:} \qM, representability, strong maps, weak maps, coproduct.

\section{Introduction}\label{S-Intro}
Due to their close relation to rank-metric codes, \qM{}s have gained a lot of attention in recent years.
They were introduced in \cite{JuPe18}, and their generalization to $q$-polymatroids appeared first in
\cite{GJLR19} and \cite{Shi19}.
While $\F_{q^m}$-linear rank-metric codes induce \qM{}s, $\F_q$-linear rank-metric codes give rise to the
more general $q$-polymatroids.

A \qM{} is the $q$-analogue of a classical matroid:
Its ground space is a finite-dimensional vector space over some finite field $\F_q$, and the rank function is defined on the
lattice of subspaces. Its properties are the natural generalization of the rank function for classical matroids.

Even though the theory of \qM{}s is still in the beginnings, many results have already been derived.
Most importantly, an abundance of cryptomorphisms have been worked out in \cite{BCJ22} (and partly in \cite{JuPe18,BCIJS23}).
They may be regarded as the natural, yet non-trivial, $q$-analogues of the well-known cryptomorphisms for classical matroids.

In this paper we will discuss an aspect of \qM{}s where the theory diverges significantly from that for classical matroids:
maps between \qM{}s and the resulting categories.
Our study of maps and categories is motivated by the question of whether \qM{} constructions can be characterized (or in fact found) via
suitable universal properties; see \cite{HePa18} for the case of classical matroids.
We will see that this is indeed the case for the direct sum.

Just like for classical matroids we define maps between \qM{}s as maps between the ground spaces that respect the \qM{}
structures in a particular way.
We take the most general approach and do not require the maps to be linear.
However, in order to be compatible with \qM{} structures, they have to map subspaces to subspaces.
In other words, they induce maps between the corresponding subspace lattices.
We call such maps $\cL$\emph{-maps}.
As for classical matroids  (see \cite[Ch.~8 and~9]{Wh86}), there are various ways how $\cL$-maps can respect the \qM{} structure.
We call an $\cL$-map \emph{weak} if for every subspace the rank is greater than or equal to the rank of its image, and it is
\emph{rank-preserving} if we have equality for all subspaces.
An $\cL$-map is \emph{strong} if the pre-image of a flat is a flat.
In addition to these properties we will also pay attention to the special case where the $\cL$-maps are linear.
The theory of $\cL$-maps and maps between \qM{}s will be worked out in \cref{S-Maps}.

It is well known that the direct sum of classical matroids is a coproduct in both the category with strong maps as
morphisms and the category with weak maps as morphisms; see \cite[Ex.~8.6, p.~244]{Wh86} (which refers to \cite{CrRo70}).
In \cref{S-NoCopr} we will show that --- in contrast to the matroid case --- the category of \qM{}s with strong maps does not
admit a coproduct, and the same is true for the category with linear strong maps and the category
with weak maps (trivially, the category with linear or non-linear rank-preserving maps does not admit a coproduct).

The main positive result of this paper concerns the direct sum of \qM{}s presented in \cite{CeJu21}.
In \cref{S-lw} we prove that the direct sum is a coproduct in the category with linear weak maps.
The appearance of \cite{CeJu21}, which happened while the first version of this paper was in preparation, is a very
fortunate incidence because the main motivation for our approach has in fact been the
quest for a direct sum.
For \qM{}s this is not a straightforward concept; in particular, none of the many ways of defining the direct sum of
classical matroids has a well-defined $q$-analogue.
Furthermore, the negative results mentioned in the previous paragraph suggest that any meaningful notion of direct sum of \qM{}s
does not behave well with respect to strong maps, i.e., with respect to flats.

The fact that the direct sum in \cite{CeJu21} does behave well with respect to linear weak maps may be interpreted as follows.
For \qM{}s $\cM_i=(E_i,\rho_i),\,i=1,2,$ consider the collection of all \qM{}s with ground space $E_1\oplus E_2$ and whose restriction to~$E_i$ equals~$\cM_i$.
The coproduct property  implies that in this collection the direct sum is the unique \qM{} with the most independent spaces,
that is,  the unique one with the ``least amount of conditions''.
This is indeed a property any notion of direct sum should satisfy.
Therefore, together with the evidence provided in \cite{CeJu21}, our paper contributes to settling on the right notion
of direct sum for \qM{}s.
Further properties of the direct sum have been established in \cite{GLJ23DSCyc}, where,
among other things, it has been shown that every \qM{} decomposes into a direct sum of
indecomposable \qM{}s in a unique way.

Finally, in the short \cref{S-Lclass} we discuss categories of $q$-matroids where the morphisms are the maps between subspace lattices
induced by $\cL$-maps.
It will be shown that this notion of maps between $q$-matroids is too weak to produce meaningful results.

\textbf{Notation:}
Throughout, let $\F=\F_q$ and~$E$ be a finite-dimensional $\F$-vector space.
We denote by $\cL(E)=(\cL(E),\leq,+,\cap)$  the subspace lattice of~$E$.
A $k$-space is a subspace of dimension~$k$.
For a matrix $M\in\F^{a\times b}$ we denote by $\rowsp(M)\in\cL(\F^b)$ and $\colsp(M)\in\cL(\F^a)$ the row space
and column space of~$M$, respectively.
We use the standard notation $[n]$ for the set $\{1,\ldots,n\}$.
For clarity we use  throughout the terminology \emph{classical matroid} for the `non-$q$-version' of matroids,
that is, matroids based on subsets of a finite ground set.

\section{Basic Notions of $q$-Matroids}

\begin{defi}\label{D-qMatroid}
A \emph{$q$-matroid with ground space~$E$} is a pair $(E,\rho)$, where $\rho: \cL(E)\longrightarrow\N_0$ is a map satisfying
\begin{mylist2}
\item[(R1)\hfill] Dimension-Boundedness: $0\leq\rho(V)\leq \dim V$  for all $V\in\cL(E)$;
\item[(R2)\hfill] Monotonicity: $V\leq W\Longrightarrow \rho(V)\leq \rho(W)$  for all $V,W\in\cL(E)$;
\item[(R3)\hfill] Submodularity: $\rho(V+W)+\rho(V\cap W)\leq \rho(V)+\rho(W)$ for all $V,W\in\cL(E)$.
\end{mylist2}
The value $\rho(V)$ is called the \emph{rank} of~$V$ and $\rho(M):=\rho(E)$ is the \emph{rank} of the \qM.
If $\rho$ is the zero map, we call $(E,\rho)$  the \emph{trivial \qM{} on}~$E$.
\end{defi}

A large class of \qM{}s are the representable ones.

\begin{defi}[\mbox{\cite[Sec.~5]{JuPe18}}]\label{D-ReprG}
Consider a field extension $\F_{q^m}$ of~$\F=\F_q$ and let $G\in\F_{q^m}^{k\times n}$ be a matrix of rank~$k$.
Define the map $\rho:\cL(\F^n)\longrightarrow\N_0$ via
\[
    \rho(V)=\rk (GY\T),\ \text{ where $Y\in\F^{y\times n}$ such that $V=\rowsp(Y)$}.
\]
Then~$\rho$ satisfies (R1)--(R3) and thus defines a \qM{} $\cM_G=(\F^n,\rho)$, which has rank~$k$.
We call $\cM_G$ the \qM{} \emph{represented by~$G$}.
Furthermore,  a \qM{} $\cM=(\F^n,\rho)$ of rank~$k$ is \emph{representable over~$\F_{q^m}$} if $\cM=\cM_G$ for some
$G\in\F_{q^m}^{k\times n}$ and it is \emph{representable} if it is representable over $\F_{q^m}$ for some~$m$.
\end{defi}

Not every \qM{} is representable. The first non-representable \qM{}s appeared in \cite[Sec.~4]{GLJ22Gen}.
However, the smallest non-representable \qM{} is the following one, found just recently in~\cite{CeJu21}.

\begin{exa}[\mbox{\cite[Sec.~3.3]{CeJu21}}]\label{E-NonRepr}
Consider $E=\F_2^4$ and let
\[
    \cV=\{\subspace{1000,\,0100},\subspace{0010,\,0001},\subspace{1001,\,0111},\subspace{1011,\,0110}\}
\]
(or any other partial spread of size $4$). Define $\rho(V)=1$ for $V\in\cV$ and $\rho(V)=\min\{2,\dim V\}$ otherwise.
It follows from \cite[Prop.~4.7]{GLJ22Gen} that this does indeed define a \qM{} $\cM=(\F_2^4,\rho)$.
In \cite[Sec.~3.3]{CeJu21} it has been shown that~$\cM$ is a non-representable \qM{}.
In \cite[Sec.~4]{GLJ22Gen} we prove that $\cM$ is not even representable over $\F_2^{4\times m}$  for any~$m$ in the sense of
\cite[Def.~4.1]{GLJ22Gen}.
\end{exa}

We return to general \qM{}s.
In \cite{BCJ22} (and partly in \cite{BCIJS23}) a variety of cryptomorphic definitions of \qM{}s have been derived.
We will need the one based on flats.
In order to formulate the corresponding results, we recall the following notions from lattice theory.
Let $(\cF,\leq)$ be any lattice. We denote by $F_1\vee F_2$ and $F_1\wedge F_2$ the \emph{join} and \emph{meet} of~$F_1$ and~$F_2$,
respectively.
We say that $F_1$ \emph{covers} $F_2$ if $F_2< F_1$ and there exists no $F\in\cF$ such that $F_2<F<F_1$.
The lattice is \emph{semi-modular} if it satisfies: whenever $F_1$ covers $F_1\wedge F_2$, then $F_1\vee F_2$ covers~$F_2$.

\begin{theo}[\mbox{\cite[Prop.~3.8 and Thm.~3.10]{BCIJS23}}]\label{T-AxFlats}
Let $\cM=(E,\rho)$ be a \qM. For $V\in\cL(E)$ define the \emph{closure} of~$V$ as
\[
    \overline{V}=\sum_{   \genfrac{}{}{0pt}{1}{\dim X=1}{\rho(V+X)=\rho(V)}     }\hspace*{-1.2em}X.
\]
$V$ is called a \emph{flat} if $\overline{V}=V$.
Thus~$V$ is a flat if and only if $\rho(V)<\rho(W)$ for every subspace~$W$ strictly containing~$V$.
Denote by $\cF:=\cF_{\rho}$ the collection of all flats of~$\cM$.
Then
\begin{Flist}
\item $E\in\cF$,
\item If $F_1,F_2\in\cF$, then $F_1\cap F_2\in\cF$,
\item For all $F\in\cF$ and all $1$-dimensional subspaces $X\in\cL(E)\setminus\cL(F)$, there exists a unique cover of~$F$ in~$\cF$ containing~$X$.
\end{Flist}
\end{theo}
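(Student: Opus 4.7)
The plan is to verify the three axioms (F1)--(F3) using the rank axioms (R1)--(R3) together with the following equivalent characterization of flats, which follows directly from the definition of closure: a subspace $V$ is a flat if and only if $\rho(V+X)>\rho(V)$ for every $1$-space $X$ with $X\not\leq V$. This equivalent form is what will be used throughout.

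Axiom (F1) is immediate, since there is no $1$-space outside $E$ (equivalently, no subspace strictly contains $E$), so the condition on the closure is vacuously satisfied.

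For (F2), let $F_1,F_2\in\cF$, set $V=F_1\cap F_2$, and let $X$ be a $1$-space with $X\not\leq V$. Then $X\not\leq F_1$ or $X\not\leq F_2$; assume the former. Since $F_1$ is a flat, $\rho(F_1+X)>\rho(F_1)$. The key observation is that $F_1\cap(V+X)=V$: the inclusion $\supseteq$ is clear, and for $\subseteq$, any element of the form $v+cx\in F_1$ with $v\in V\leq F_1$ forces $cx\in F_1$, hence $c=0$ because $X\not\leq F_1$. Submodularity (R3) applied to $F_1$ and $V+X$ then gives
\[
\rho(V+X)\geq \rho(F_1+X)+\rho(V)-\rho(F_1)>\rho(V),
\]
so $V$ is a flat. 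I expect this to be the main technical obstacle, since one has to choose the right pair of subspaces on which to apply submodularity and establish the auxiliary identity $F_1\cap(V+X)=V$.

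For (F3), I would prove existence and uniqueness separately. For existence, set $F'=\overline{F+\subspace{v}}$. By submodularity applied to $F$ and $\subspace{v}$, together with (R1), we have $\rho(F+\subspace{v})\leq\rho(F)+1$, and since $F$ is a flat and $v\notin F$, the inequality $\rho(F+\subspace{v})>\rho(F)$ holds, so $\rho(F')=\rho(F+\subspace{v})=\rho(F)+1$. Because $F'$ is a flat, any flat~$F''$ with $F<F''<F'$ would satisfy $\rho(F)<\rho(F'')<\rho(F')=\rho(F)+1$, which is impossible; hence $F'$ is a cover of~$F$ in~$\cF$, and it contains~$v$. For uniqueness, suppose $F_1,F_2\in\cF$ both cover~$F$ and contain~$v$. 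By (F2), $F_1\cap F_2\in\cF$, and $F\leq F_1\cap F_2\leq F_1$; since $v\in F_1\cap F_2\setminus F$, we have $F_1\cap F_2\neq F$, so the covering property of $F_1$ over~$F$ inside~$\cF$ forces $F_1\cap F_2=F_1$, i.e.\ $F_1\leq F_2$. By symmetry $F_1=F_2$. This completes the proof.
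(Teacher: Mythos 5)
A preliminary remark: the paper does not prove this theorem itself but imports it from \cite{BCIJS20}, so there is no in-paper argument to compare against; I am therefore assessing your proof on its own terms.

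Your arguments for (F1), for (F2), and for the uniqueness half of (F3) are correct and complete; in particular the auxiliary identity $F_1\cap(V+X)=V$ and the choice of the pair $F_1$, $V+X$ for submodularity in (F2) are exactly the right moves. The gap is in the existence half of (F3). You set $F'=\overline{F+\subspace{v}}$ and then use, without justification, two facts: that $\rho(F')=\rho(F+\subspace{v})$ (taking the closure does not change the rank) and that $F'$ is itself a flat (the closure operator is idempotent). Neither is immediate from the definition of $\overline{V}$ as a sum of $1$-spaces, and together they are the only place in the derivation of (F1)--(F3) where the closure has to interact nontrivially with submodularity, so they cannot simply be asserted. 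The missing lemma is: if $X_1,\dots,X_k$ are $1$-spaces with $\rho(W+X_i)=\rho(W)$ for all $i$, then $\rho(W+X_1+\dots+X_k)=\rho(W)$. (For $k=2$: by (R3), $\rho(W+X_1+X_2)+\rho\bigl((W+X_1)\cap(W+X_2)\bigr)\le\rho(W+X_1)+\rho(W+X_2)=2\rho(W)$, and the intersection contains $W$, so (R2) gives $\rho(W+X_1+X_2)\le\rho(W)$, hence equality; then induct.) This yields $\rho(\overline{W})=\rho(W)$ for every $W$, and idempotence follows because $\rho(\overline{W}+X)=\rho(\overline{W})$ forces $\rho(W+X)=\rho(W)$ by sandwiching with (R2), whence $X\le\overline{W}$. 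With this lemma inserted, your existence argument, and hence the whole proof, goes through.
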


The converse result tells us that any collection of subspaces satisfying (F1)--(F3) gives rise to a \qM.

\begin{theo}[\mbox{\cite[Cor.~3.11 and Thm.~3.13]{BCIJS23}}]\label{T-FlatsMatroid}
Let~$\cF\subseteq\cL(E)$ be a collection of subspaces satisfying (F1)--(F3) from \cref{T-AxFlats}.
For $V\in\cL(E)$ set $\overline{V}=\bigcap_{   \genfrac{}{}{0pt}{1}{F\in\cF}{V\subseteq F}    }F$.
\begin{alphalist}
\item  $(\cF,\subseteq)$ is a semi-modular lattice with the meet and join of two flats given by
$F_1\wedge F_2=F_1\cap F_2$ and $F_1\vee F_2=\overline{F_1+F_2}$, respectively.
\item  By semi-modularity all maximal chains between any two fixed elements in~$\cF$ have the same length
         (Jordan-Dedekind chain condition).
\item For any $F\in\cF$ denote by $\text{h}(F)$ the length of a maximal chain from~$0_{\cF}$ to $F$.
         Define the map $\rho_{\cF}:\cL(E)\longrightarrow\N_0,\ V\longmapsto \text{h}(\overline{V})$. Then
        $(E,\rho_{\cF})$ is a \qM.
\end{alphalist}
\end{theo}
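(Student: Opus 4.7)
The plan centers on the closure operator $\overline{\cdot}:\cL(E)\longrightarrow\cF$. First, iterating~(F2) shows that any finite intersection of flats is a flat, and~(F1) ensures the intersection defining $\overline{V}$ is nonempty; hence $\overline{V}$ is the (unique) smallest flat containing~$V$ for every $V\in\cL(E)$, and closure is monotonic and idempotent.

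For part~(a), $F_1\cap F_2\in\cF$ is the meet by~(F2), while $\overline{F_1+F_2}$ is the join, being the smallest flat containing both $F_1$ and $F_2$. The substantive step is semi-modularity. Assume $F_1$ covers $F_1\cap F_2$; then $F_1\not\subseteq F_2$, so pick $v\in F_1\setminus F_2$ and let $G$ be the unique cover of $F_2$ through~$v$ provided by~(F3). The pivotal trick is to examine $F_1\cap G$: by~(F2) it is a flat, it strictly contains $F_1\cap F_2$ (since $v\in F_1\cap G$), and it lies inside~$F_1$, so the covering hypothesis forces $F_1\cap G=F_1$, i.e.\ $F_1\subseteq G$. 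Consequently $F_1\vee F_2=\overline{F_1+F_2}\subseteq G$, and since $F_1\vee F_2$ strictly contains~$F_2$ (it contains~$v$) while~$G$ covers~$F_2$, we conclude $F_1\vee F_2=G$ covers~$F_2$, as required.

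Part~(b) is now the classical Jordan--Dedekind chain condition for a finite semi-modular lattice with a minimum element, obtained by the usual induction on maximal chain lengths. For part~(c), I would verify~(R1)--(R3) through the resulting height function~$h$. For~(R1) with $\dim V=d$, choose a basis $v_1,\ldots,v_d$ of~$V$, set $V_i=\subspace{v_1,\ldots,v_i}$, and follow the chain $\overline{V_0}\subseteq\overline{V_1}\subseteq\cdots\subseteq\overline{V_d}=\overline{V}$. At each step either $v_{i+1}\in\overline{V_i}$ and the closures coincide, or~(F3) yields a unique cover of $\overline{V_i}$ through~$v_{i+1}$, which must equal $\overline{V_{i+1}}$ by minimality of closure; removing the equalities leaves a saturated chain from $0_\cF$ to $\overline{V}$ of length~$\leq d$, so $\rho_\cF(V)=h(\overline{V})\leq d$ by~(b). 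Monotonicity~(R2) is immediate from monotonicity of closure and of~$h$. For~(R3), the identity $\overline{V+W}=\overline{V}\vee\overline{W}$ together with the inclusion $\overline{V\cap W}\subseteq\overline{V}\cap\overline{W}=\overline{V}\wedge\overline{W}$ (both by minimality of closure) reduces the claim to the standard submodular inequality $h(F_1\vee F_2)+h(F_1\wedge F_2)\leq h(F_1)+h(F_2)$ valid on any semi-modular lattice.

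The main obstacle I anticipate is the semi-modularity argument in~(a): it is the only place where~(F3) is used through both existence and uniqueness of the cover, and the trick of passing via $F_1\cap G$ rather than tackling $F_1\vee F_2$ head-on is not immediate. Once semi-modularity is in hand, the remaining parts reduce to routine closure-operator bookkeeping combined with standard semi-modular lattice theory.
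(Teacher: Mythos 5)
The paper does not prove this theorem; it is imported verbatim from \cite[Cor.~3.11 and Thm.~3.13]{BCIJS20}, so there is no internal proof to compare against. Judged on its own, your proof is correct and complete. The preliminary observations (finite intersections of flats are flats, $\overline{V}$ is the least flat containing $V$, closure is monotone and idempotent) are right, and the semi-modularity argument is sound: from ``$F_1$ covers $F_1\wedge F_2$'' you correctly get $F_1\not\subseteq F_2$, the cover $G$ of $F_2$ through $v\in F_1\setminus F_2$ supplied by (F3) satisfies $F_1\cap F_2<F_1\cap G\leq F_1$, the covering hypothesis forces $F_1\subseteq G$, and then $F_2<F_1\vee F_2\leq G$ pins down $F_1\vee F_2=G$. (Only the existence half of (F3) is used here; uniqueness enters nowhere in your argument, contrary to your closing remark, though this is harmless.) Part (c) is also handled correctly: the chain $\overline{V_0}\subseteq\cdots\subseteq\overline{V_d}$ in which each step is an equality or a cover gives (R1), monotonicity of closure and of $h$ gives (R2), and the identities $\overline{V+W}=\overline{V}\vee\overline{W}$ and $\overline{V\cap W}\subseteq\overline{V}\wedge\overline{W}$ reduce (R3) to the standard submodularity of the height function on a semi-modular lattice. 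This is the standard $q$-analogue of the classical flats cryptomorphism and is essentially the argument one finds in the cited source.
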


The above two processes are mutually inverse in the following sense.

\begin{theo}[\mbox{\cite[Thm.~3.13]{BCIJS23}}]\label{T-FlatsRank}\
\begin{alphalist}
\item Let $(E,\rho)$ be a \qM. Then $\rho_{\cF_{\rho}}=\rho$.
\item Let $\cF$ be a collection of subspaces satisfying (F1)--(F3). Then $\cF_{\rho_{\cF}}=\cF$.
\end{alphalist}
\end{theo}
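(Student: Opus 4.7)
The plan is to prove both parts via a common bridge: for every flat $F$, the rank $\rho(F)$ equals the height $h(F)$ in the flat lattice. Once that identity is established, the two directions become short order-theoretic arguments.

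For part~(a), start from $(E,\rho)$ and first collect two classical consequences of (R1)--(R3) which follow directly from the construction of $\overline{V}$ in \cref{T-AxFlats}: the rank closure $\overline{V}$ is itself a flat (so it is the smallest flat containing~$V$), and $\rho(V)=\rho(\overline{V})$. In particular $\overline{V}$ coincides with the intersection $\bigcap_{F\in\cF_\rho,\,V\subseteq F}F$ used in \cref{T-FlatsMatroid} to define $\rho_{\cF_\rho}$. It therefore suffices to prove $\rho(F)=h(F)$ for every $F\in\cF_\rho$. The key step is to show that whenever $F'$ covers $F$ in $\cF_\rho$, one has $\rho(F')=\rho(F)+1$: pick $v\in F'\setminus F$; submodularity applied to $F$ and $\langle v\rangle$ gives $\rho(F+\langle v\rangle)\leq \rho(F)+1$, and since $v\notin F=\overline{F}$ the inequality is strict above $\rho(F)$, so $\rho(F+\langle v\rangle)=\rho(F)+1$. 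The flat $\overline{F+\langle v\rangle}$ lies between $F$ and $F'$ and strictly contains~$F$, so the covering hypothesis forces it to equal~$F'$; using $\rho(F')=\rho(\overline{F+\langle v\rangle})=\rho(F+\langle v\rangle)$ gives the desired jump. Telescoping along a maximal chain from $0_{\cF_\rho}$ to~$F$ (which is well defined by the Jordan--Dedekind property of \cref{T-FlatsMatroid}(b)) yields $\rho(F)=h(F)$, and combining this with $\rho(V)=\rho(\overline{V})$ concludes part~(a).

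For part~(b), start with $\cF\subseteq\cL(E)$ satisfying (F1)--(F3) and form $\rho_\cF$ as in \cref{T-FlatsMatroid}. To show $\cF\subseteq\cF_{\rho_\cF}$, fix $F\in\cF$ and any $W\in\cL(E)$ with $W\supsetneq F$; then $\overline{W}\in\cF$ and $\overline{W}\supsetneq F$ (otherwise $W\subseteq F$). The Jordan--Dedekind chain condition gives $h(\overline{W})>h(F)$, hence $\rho_\cF(W)>\rho_\cF(F)$, which is exactly the characterization in \cref{T-AxFlats} of~$F$ being a flat of $(E,\rho_\cF)$. For the reverse inclusion, let $F'\in\cF_{\rho_\cF}$ and let $\overline{F'}\in\cF$ be its $\cF$-closure. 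If $F'\subsetneq\overline{F'}$, pick $v\in\overline{F'}\setminus F'$ and set $W=F'+\langle v\rangle\subseteq\overline{F'}$. Then $\overline{W}=\overline{F'}$, so $\rho_\cF(W)=h(\overline{F'})=\rho_\cF(F')$, contradicting the flat property of~$F'$ in $(E,\rho_\cF)$. Therefore $F'=\overline{F'}\in\cF$, giving $\cF_{\rho_\cF}\subseteq\cF$.

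The main obstacle is the rank-jump-by-one along covers in part~(a); it is where submodularity, the definition of a flat, and the uniqueness part of (F3) all have to be combined correctly. Once that identity is available, the argument in~(b) is formal, since it only exploits the chain condition recorded in \cref{T-FlatsMatroid}(b) and the idempotence of the $\cF$-closure operator.
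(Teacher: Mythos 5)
The paper offers no proof of this theorem at all: it is imported verbatim from \cite[Thm.~3.13]{BCIJS20}, so there is no internal argument to compare yours against. Judged on its own merits, your proof is correct and is the standard rank--flats cryptomorphism argument transported to the subspace lattice: the bridge identity $\rho(F)=h(F)$, obtained from the rank-jump-by-one along covering pairs of flats, is exactly the right key lemma, and both inclusions in part~(b) are handled correctly (strict monotonicity of the height function, which does follow from the Jordan--Dedekind condition in \cref{T-FlatsMatroid}(b), for one direction; idempotence of the $\cF$-closure for the other).

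The one place where you are terser than the argument warrants is the opening of part~(a). That $\rho(\overline{V})=\rho(V)$ and that $\overline{V}$ is itself a flat (hence the smallest flat containing $V$) do not quite ``follow directly from the construction''; each needs a short submodularity argument. For instance, if $\rho(V+X_1)=\rho(V+X_2)=\rho(V)$ for $1$-spaces $X_1,X_2$, then $\rho(V+X_1+X_2)+\rho(V)\leq\rho(V+X_1)+\rho(V+X_2)$ forces $\rho(V+X_1+X_2)=\rho(V)$, and iterating over all $1$-spaces in the defining sum gives $\rho(\overline{V})=\rho(V)$; a similar submodularity computation shows that any flat $F\supseteq V$ absorbs every $1$-space $X$ with $\rho(V+X)=\rho(V)$, whence $\overline{V}\leq F$ and the rank-closure coincides with the intersection-closure used in \cref{T-FlatsMatroid}. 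These auxiliary facts are themselves part of the material the paper cites from \cite[Prop.~3.8]{BCIJS20}, so you may legitimately treat them as known alongside \cref{T-AxFlats}; if the proof is meant to be self-contained they should be spelled out. With that caveat the argument is complete.
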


This last result allows us to define \qM{}s based on their flats, denoted by $(E,\cF)$, or based on the rank function, denoted by $(E,\rho)$.

\begin{exa}\label{E-Uniform}
Let $0\leq k\leq \dim E$. We denote by $\cU_k(E)$ the uniform \qM{} of rank~$k$ on~$E$, that is, its rank function is given by
$\rho(V)=\min\{k,\dim V\}$. The flats are given by $\cF(\cU_{k}(E))=\{V\leq E\mid \dim V\leq k-1\}\cup\{E\}$.
Note that $\cU_0(E)$ is the trivial \qM{} on~$E$.
It is well known (\cite[Ex.~31]{JuPe18} or \cite[Cor.~6.6]{GJLR19}) that $\cU_{k}(\F_q^n)$ is representable over any
$\F_{q^m}$ with $m\geq n$: choose any $G\in\F_{q^m}^{k\times n}$ that generates an $\F_{q^m}$-linear MRD code (of rank distance
$n-k+1$).
\end{exa}

\section{Maps Between $q$-Matroids}\label{S-Maps}
In this section we introduce maps between \qM{}s.
The candidates for such maps are (possibly nonlinear) maps between the ground spaces of the \qM{}s with the
property that they map subspaces to subspaces.
Such maps will be called $\cL$-maps.
By definition, $\cL$-maps induce maps between the associated subspace lattices.
As a consequence, one may choose $\cL$-maps or their induced maps as maps between \qM{}s.
In \cref{S-Lclass} we will briefly discuss the second option, while for now we focus on $\cL$-maps
themselves.
As maps between \qM{}s, they should respect the \qM{} structure.
This can be achieved in various ways, and we will introduce the options later in this section.

Throughout this section let $E_1,E_2$ be finite-dimensional $\F$-vector spaces.

\begin{defi}\label{D-MatMap}
Let $\phi:E_1\longrightarrow E_2$ be a map.
We call~$\phi$ an \emph{$\cL$-map} if $\phi(V)\in\cL(E_2)$ for all $V\in\cL(E_1)$.
The induced map from~$\cL(E_1)$ to $\cL(E_2)$ is denoted by~$\phi_\cL$.
A bijective $\cL$-map is called an \emph{$\cL$-isomorphism}.
Finally, $\cL$-maps $\phi,\,\psi$ from~$E_1$ to~$E_2$ are $\cL$-\emph{equivalent}, denoted by $\phi\sim_{\cL}\psi$, if $\phi_\cL=\psi_\cL$.
\end{defi}

An $\cL$-map $\phi:E_1\longrightarrow E_2$ is thus a possibly nonlinear map that maps
subspaces of~$E_1$ to subspaces of~$E_2$.
It satisfies $\phi(0)=0$ and
\begin{equation}\label{e-phiV}
     \phi(\subspace{v})=\subspace{\phi(v)}\ \text{ for all }\ v\in E_1.
\end{equation}
This follows from the fact that $\phi(\subspace{v})$ is a subspace of cardinality at most~$q$ containing $0$ and~$\phi(v)$.
Our definition of $\cL$-isomorphisms is justified by the following simple fact.

\begin{rem}\label{R-SIso}
Let $\phi:E_1\longrightarrow E_2$ be a bijective $\cL$-map. Then~$\phi^{-1}$ is also an $\cL$-map.
To see this, note that $\dim E_1=\dim E_2$ by bijectivity of~$\phi$ and thus the subspace lattices
$\cL(E_1)$ and $\cL(E_2)$ are isomorphic.
Hence $\phi^{-1}$ is also an $\cL$-map.
\end{rem}

Recall that a map $\phi:E_1\longrightarrow E_2$ is $\F$-semilinear if $\phi$ is additive and there exists $\sigma\in\Aut(\F)$ such that
$\phi(c v)=\sigma(c)\phi(v)$ for all $v\in E_1$ and $c\in\F$.
Clearly, any semi-linear map $\phi:E_1\longrightarrow E_2$ is an $\cL$-map.
Here are examples of non-semi-linear $\cL$-maps and of $\cL$-equivalent maps.
A more general construction of $\cL$-equivalent $\cL$-maps will be given in \cref{P-S1dim}(b).

\begin{exa}\label{E-qMMap}
\begin{alphalist}
\item Let $X\lneq E_1$ be any subspace and let $v_1,\ldots,v_{\ell}\in E_1$ be such that
         $\subspace{v_1},\ldots,\subspace{v_\ell}$ are the distinct lines in~$E_1$ that are not contained in~$X$.
         Choose $z\in E_2\setminus\{0\}$. Set
        \[
            \phi:E_1\longrightarrow E_2,\quad v\longmapsto\left\{\begin{array}{cl} 0,&\text{if }v\in X,\\[.5ex] \lambda z,&\text{if $v=\lambda v_i$ for some $i\in[\ell]$}.\end{array}\right.
        \]
        Then $\phi(V)=\{0\}$ for all $V\in\cL(X)$ and $\phi(V)=\subspace{z}$ for all $V\in\cL(E_1)\setminus\cL(X)$.
        Thus,~$\phi$ is an $\cL$-map.
         Furthermore, the pre-image of any subspace is a subspace.
        Indeed,  let $W\leq E_2$. Then $\phi^{-1}(W)=X$ if $z\not\in W$ and $\phi^{-1}(W)=E_1$ if $z\in W$.
        Note that~$\phi$ depends on the choice of the representatives~$v_i$ for the distinct lines.
        One can easily create examples where~$\phi$ is not semi-linear.
        (In fact, one can show that there always exist choices such that~$\phi$ is not semi-linear
        unless $\dim E_1=1$ or $[\dim  X=\dim E_1-1$ and $\F_q=\F_2]$.)
\item  Let $\phi:\F_2^3\longrightarrow\F_2^2$ be given by $\phi(v_1,v_2,0)=(v_1,v_2)$ and $\phi(v_1,v_2,1)=(0,0)$ for all $v_1,v_2\in\F_2$.
        Then $\phi$ is a nonlinear $\cL$-map. In this case, the pre-image of a subspace is not necessarily a subspace, for instance
        $\phi^{-1}(\{(0,0)\})$.
\item Clearly, if $\phi:E_1\longrightarrow E_2$ is an $\cL$-map, and $\psi=\lambda\phi$ for some $\lambda\in\F^*$, then~$\phi$ and~$\psi$ are
        $\cL$-equivalent $\cL$-maps.
\item Let $\F_4=\{0,1,\alpha,\alpha^2\}$ and consider the semi-linear map $\phi:\F_4^2\longrightarrow\F_4^2,\ (v_1,v_2)\longmapsto(v_1^2,v_2^2)$.
        Furthermore, let
        \[
            \psi:\F_4^2\longrightarrow\F_4^2,\quad (v_1,v_2)\longmapsto\left\{\begin{array}{cl}
                \alpha(v_1^2,v_2^2),&\text{if }(v_1,v_2)\in\subspace{(1,1)},\\[.5ex]
                (v_1^2,v_2^2),&\text{otherwise.}\end{array}\right.
        \]
        One easily verifies that~$\psi$ is an $\cL$-map and $\phi$ and~$\psi$ are $\cL$-equivalent.
\end{alphalist}
\end{exa}

We now discuss the relation between $\cL$-maps from~$E_1$ to~$E_2$  and lattice homomorphism from
$(\cL(E_1),\leq,+,\cap)$ to $(\cL(E_2),\leq,+,\cap)$.
To do so, recall the following notions and simple facts.
For further details see for instance \cite[Ch.~3]{Rom08} or \cite[Ch.~2]{DaPr02}.

\begin{rem}\label{R-LattHomo}
Let $(\cL_i,\leq_i,\vee_i,\wedge_i),\,i=1,2,$ be two lattices and  $\phi:\cL_1\longrightarrow\cL_2$ be a map.
Then~$\phi$ is a \emph{lattice homomorphism} if it is meet- and join-preserving, that is,
$\phi(a\wedge_1 b)=\phi(a)\wedge_2\phi(b)$ and $\phi(a\vee_1 b)=\phi(a)\vee_2\phi(b)$ for all $a,b\in\cL_1$.
If each~$\cL_i$ has a least element $0_i$ and a greatest element~$1_i$, we call
$\phi$ a $\{0,1\}$-\emph{lattice homomorphism} if it is a lattice homomorphism satisfying $\phi(0_1)=0_2$ and
$\phi(1_1)=1_2$.
Finally, we call $\phi$ \emph{order-preserving} if $a\leq_1 b$ implies $\phi(a)\leq_2\phi(b)$ for all $a,b\in\cL_1$.
We have the following facts.
\begin{alphalist}
\item Every lattice homomorphism is order-preserving \cite[Prop.~2.19]{DaPr02}. Clearly, every $\cL$-map~$\phi$ induces an order-preserving map~$\phi_{\cL}$.
\item Let $\phi:E_1\longrightarrow E_2$ be an $\cL$-map and $\phi(v_1)=\phi(v_2)\neq0$ for some linearly independent vectors $v_1,v_2\in E_1$.
        Then~$\phi_{\cL}$ is not meet-preserving and thus not a lattice homomorphism.
        An example of such a map $\phi$ is in \cref{E-qMMap}(a) if $\dim E_1>1$.
\item A lattice homomorphism need not be a $\{0,1\}$-lattice homomorphism.
        Consider for instance the embedding $\tau:\F_2^2\longrightarrow\F_2^3,\,(x,y)\longmapsto (x,y,0)$, and the map
        $\phi:\cL(\F_2^2)\longrightarrow\cL(\F_2^3)$ given by $\phi(V)=\tau(V)$.
\item A lattice isomorphism is a $\{0,1\}$-lattice isomorphism.
\item A $\{0,1\}$-lattice homomorphism need not be a lattice isomorphism. Consider, for instance, $\phi:\cL(\F_2^2)\longrightarrow\cL(\F_2^4)$,
        which maps $0$ to~$0$ and $\F_2^2$ to $\F_2^4$ and the three 1-spaces in $\F_2^2$ to the subspaces $\subspace{1000,0100},\,\subspace{0010,0001}$, and $\subspace{1010,0101}$.
\end{alphalist}
\end{rem}

Note that the map in \cref{R-LattHomo}(e) is not induced by an $\cL$-map (because $\dim V<\dim \phi(V)$).
For maps induced by $\cL$-maps we have some stronger statements.

\begin{prop}\label{P-LattHom}
Let $\phi:E_1\longrightarrow E_2$ be an $\cL$-map and $\phi_\cL:\cL(E_1)\longrightarrow\cL(E_2)$ be the induced map.
\begin{alphalist}
\item If~$\phi$ is injective, then $\phi_\cL$ is a lattice homomorphism.
\item Suppose $E_2\neq0$. Then $\phi$ is an $\cL$-isomorphism $\Longleftrightarrow \phi_\cL$ is a $\{0,1\}$-lattice homomorphism
        $\Longleftrightarrow\phi_\cL$ is a lattice isomorphism.
\end{alphalist}
\end{prop}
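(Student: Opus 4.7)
My plan for (a) is to verify meet- and join-preservation of $\phi_\cL$ separately. Meet-preservation is direct: $\phi(V\cap W)\subseteq\phi(V)\cap\phi(W)$ is order-preservation, while any $x\in\phi(V)\cap\phi(W)$ has the form $\phi(v)=\phi(w)$ for some $v\in V$, $w\in W$, and injectivity forces $v=w\in V\cap W$. For joins, the inclusion $\phi(V)+\phi(W)\subseteq\phi(V+W)$ is again order-preservation, and the reverse follows by dimension counting: injectivity combined with the fact that $\phi(U)$ is a subspace gives $\dim\phi(U)=\dim U$ for every $U\in\cL(E_1)$, and combining the modular law with the meet identity just proved then yields $\dim(\phi(V)+\phi(W))=\dim V+\dim W-\dim(V\cap W)=\dim(V+W)=\dim\phi(V+W)$.

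For (b) I would prove the three equivalences via a short cycle of implications among the conditions (i) $\phi$ is an $\cL$-isomorphism, (ii) $\phi_\cL$ is a $\{0,1\}$-lattice homomorphism, (iii) $\phi_\cL$ is a lattice isomorphism. The step (i)$\Rightarrow$(ii) follows from (a) together with $\phi_\cL(0)=\{\phi(0)\}=0$ and $\phi_\cL(E_1)=\phi(E_1)=E_2$ (surjectivity). The step (i)$\Rightarrow$(iii) additionally invokes \cref{R-SIso}: applying (a) to both $\phi$ and $\phi^{-1}$ produces mutually inverse lattice homomorphisms $\phi_\cL$ and $(\phi^{-1})_\cL$. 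The step (iii)$\Rightarrow$(ii) is automatic since a lattice isomorphism preserves top and bottom.

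The heart of the argument is (ii)$\Rightarrow$(i). Surjectivity of $\phi$ is immediate since $\phi(E_1)=\phi_\cL(E_1)=E_2$. For injectivity I would analyze the fibre $K:=\phi^{-1}(0)$ in two stages. First, join-preservation applied to $v,v'\in K$ gives $\phi_\cL(\subspace{v,v'})=0+0=0$, so $\phi$ vanishes on $\subspace{v,v'}$; hence $K$ is a subspace. Second, I claim $K=0$: otherwise pick $v\in K\setminus\{0\}$ and $w$ with $\phi(w)\neq 0$ (such $w$ exists since $\phi_\cL(E_1)=E_2\neq 0$). Then $v,w$ are independent, join-preservation yields $\phi(\subspace{v,w})=\subspace{\phi(w)}$ so $\phi(v+w)\in\subspace{\phi(w)}$, while meet-preservation applied to $\subspace{v+w}\cap\subspace{w}=0$ gives $\subspace{\phi(v+w)}\cap\subspace{\phi(w)}=0$, forcing $\phi(v+w)=0$; thus $v+w\in K$ and then $w=(v+w)-v\in K$, contradicting $\phi(w)\neq 0$. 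Finally I would rule out $\phi(v)=\phi(w)\neq 0$ with $v\neq w$ nonzero: if $\subspace{v}=\subspace{w}$, then $\phi|_{\subspace{v}}\colon\subspace{v}\to\subspace{\phi(v)}$ is a surjection between $q$-element sets by \eqref{e-phiV}, hence bijective, contradicting $v\neq w$; if $\subspace{v}\neq\subspace{w}$, meet-preservation on $\subspace{v}\cap\subspace{w}=0$ contradicts $\subspace{\phi(v)}=\subspace{\phi(w)}\neq 0$.

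I expect the main obstacle to be the proof that $K=0$, as this is the one step where both join- and meet-preservation must be combined; all the other pieces follow either from (a), from immediate features of lattice isomorphisms, or from the simple observation that $\phi$ is forced to be bijective on each line on which it is nonzero.
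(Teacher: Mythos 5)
Your proposal is correct. Part (a) matches the paper's argument essentially verbatim: meet-preservation from injectivity, join-preservation by the same dimension count. For part (b) the outer structure also agrees — everything reduces to showing that a $\{0,1\}$-lattice homomorphism $\phi_\cL$ forces $\phi$ to be bijective — but your argument for that key implication is genuinely different. The paper argues by counting: assuming $e_2=\dim E_2<\dim E_1=e_1$, meet-preservation forces any two $1$-spaces with equal image to be sent to $0$, so at least $(q^{e_1}-q^{e_2})/(q-1)$ of the $1$-spaces of $E_1$ map to $0$; a maximal-dimensional subspace $V$ with $\phi(V)=0$ contains too few $1$-spaces to account for all of these, and join-preservation with a $1$-space outside $V$ mapping to $0$ contradicts maximality, yielding $e_1=e_2$ and hence bijectivity of the surjection $\phi$. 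You instead prove injectivity directly and structurally: join-preservation makes the fibre $K=\phi^{-1}(0)$ a subspace; the intersection trick with $\subspace{v+w}\cap\subspace{w}=0$ combines meet- and join-preservation to show $K=0$; and meet-preservation on distinct lines, together with \eqref{e-phiV} restricted to a single line, shows the nonzero fibres are singletons. Your route avoids the counting estimate entirely and makes it more transparent exactly where each lattice axiom is used; the paper's route is shorter on the page and delivers $\dim E_1=\dim E_2$ as its headline intermediate conclusion. Both arguments are complete.
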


Note that if $E_2=0$, then the zero map induces a $\{0,1\}$-lattice homomorphism; thus the exceptional case in (b).

\begin{proof}
(a) Let $V_1,V_2\in\cL(E_1)$ and $\hat{V}_i=\phi(V_i)$. Then clearly $\phi(V_1\cap V_2)\subseteq \hat{V_1}\cap\hat{V_2}$. For the converse containment let $\hat{v}\in\hat{V_1}\cap\hat{V_2}$.
Then there exist $v_i\in V_i$ such that $\phi(v_1)=\hat{v}=\phi(v_2)$ and injectivity implies $v_1=v_2\in V_1\cap V_2$.
This shows $\phi(V_1\cap V_2)= \hat{V_1}\cap\hat{V_2}$ and thus $\phi_\cL$ is meet-preserving.
Next, we clearly have $\hat{V_1}+\hat{V_2}\subseteq\phi(V_1+ V_2)$ and equality follows from
$\dim(\hat{V_1}+\hat{V_2})=\dim\hat{V_1}+\dim\hat{V_2}-\dim(\hat{V_1}\cap\hat{V_2})=\dim V_1+\dim V_2-\dim(V_1\cap V_2)=
\dim(V_1+V_2)=\dim\phi(V_1+V_2)$. Thus~$\phi$ is join-preserving.
\\
(b) Let $E_2\neq0$. The only implication that remains to be proven  is that if $\phi_\cL$ is a $\{0,1\}$-lattice homomorphism then~$\phi$ is bijective.
Thus, let $\phi_\cL$ be a $\{0,1\}$-lattice homomorphism.
By assumption $\phi(E_1)=E_2$, and thus~$\phi$ is surjective and $\dim E_2\leq \dim E_1$.
Let $e_i=\dim E_i$. We have to show that $e_1=e_2$.
Assume by contradiction that $e_2<e_1$.
We proceed in several steps.
\\
i) Clearly~$\phi$ maps $1$-spaces of~$E_1$ to subspaces of~$E_2$ of dimension~$1$ or~$0$.
Suppose $\phi(V)=\phi(W)$ for some 1-spaces $V\neq W$.
Then the properties of a $\{0,1\}$-lattice homomorphism imply $\phi(V)=\phi(V)\cap\phi(W)=\phi(V\cap W)=\phi(0)=0$.
Since~$E_i$ has $(q^{e_i}-1)/(q-1)$ 1-spaces, we conclude that at most $(q^{e_2}-1)/(q-1)$ 1-spaces of~$E_1$ are mapped to $1$-spaces, and
thus~$\phi$ maps at least $(q^{e_1}-q^{e_2})/(q-1)$ 1-spaces to~$0$.
 \\
ii) Let now $V\in\cL(E_1)$ be a maximal-dimensional subspace such that $\phi(V)=0$.
Then $V\neq E_1$ since $E_2\neq0$.
Let $\dim V=v$.
The number of 1-spaces in~$V$ is $(q^v-1)/(q-1)$.
Using $e_2\leq e_1-1$ as well as $v\leq e_1-1$, we compute
\[
   \frac{q^{e_1}-q^{e_2}}{q-1}-\frac{q^v-1}{q-1}\geq \frac{q^{e_1}-q^{e_2}}{q-1}-\frac{q^{e_1-1}-1}{q-1}
   =q^{e_1-1}-\frac{q^{e_2}-1}{q-1}\geq1.
\]
Now~i) tells us that there exists at least one $1$-space $W\in\cL(E_1)$ such that $W\not\leq V$ and $\phi(W)=0$.
Now the join-preserving property implies $\phi(V+W)=\phi(V)+\phi(W)=0$, in contradiction to the maximality of~$V$.
All of this shows that $e_1=e_2$ and thus~$\phi$ is bijective.
\end{proof}

The $\cL$-isomorphisms from \cref{P-LattHom}(b) are, up to $\cL$-equivalence, semi-linear maps.
This is a consequence of the Fundamental Theorem of Projective Geometry;
see for instance \cite[Ch.~II.10]{Art57} or \cite[Thm.~1]{Put} (for the interesting original source of this theorem, which is attributed to von Staudt, one
may also consult \cite{Har08}).

\begin{theo}\label{T-FTPG}
Let $\dim E_1=\dim E_2=n$, where $n\geq3$. Let $\tau:\cL(E_1)\longrightarrow\cL(E_2)$ be a lattice isomorphism.
Then there exists a semi-linear map $\phi:E_1\longrightarrow E_2$ such that $\tau=\phi_\cL$.
\end{theo}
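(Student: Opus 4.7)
The plan is to construct a semi-linear map $\phi:E_1\to E_2$ directly from a chosen basis of~$E_1$ and then verify that $\phi_\cL=\tau$. Since $\tau$ is a lattice isomorphism it preserves height, so it restricts to a bijection on $k$-spaces for each~$k$. I would start by fixing a basis $e_1,\ldots,e_n$ of~$E_1$ and picking a nonzero vector $f_i$ in each 1-space $\tau(\subspace{e_i})$. Because $\tau$ is join- and dimension-preserving, the $f_i$ form a basis of~$E_2$.

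The central step is to extract a field automorphism $\sigma:\F\to\F$ from~$\tau$. Restrict $\tau$ to $\cL(\subspace{e_1,e_2})$: for each $c\in\F$, the 1-space $\tau(\subspace{e_1+ce_2})$ lies in $\subspace{f_1,f_2}$ and is distinct from $\subspace{f_2}$, so it has the form $\subspace{f_1+\sigma(c)f_2}$ for a unique $\sigma(c)\in\F$. The main obstacle will be proving that $\sigma$ respects addition and multiplication, since this must be extracted from the lattice structure alone. The standard tool is a three-dimensional incidence argument: using the auxiliary basis vector~$e_3$ (this is where the hypothesis $n\geq 3$ enters), both $c+d$ and $cd$ can be characterized as the unique scalar realizing the intersection of two specific 2-spaces inside $\subspace{e_1,e_2,e_3}$. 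Since $\tau$ preserves meets, joins, and dimension, the same incidence pattern is reproduced in $\subspace{f_1,f_2,f_3}$, forcing $\sigma(c+d)=\sigma(c)+\sigma(d)$ and $\sigma(cd)=\sigma(c)\sigma(d)$. The exceptional case $q=n=2$ is immediate because $\GL_2(\F_2)$ realizes every permutation of the three 1-spaces of~$\F_2^2$, so every lattice automorphism of $\cL(\F_2^2)$ comes from a linear map.

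With $\sigma$ in hand, define $\phi$ by $\phi\bigl(\sum_{i=1}^n c_i e_i\bigr)=\sum_{i=1}^n \sigma(c_i) f_i$. By construction $\phi$ is $\sigma$-semi-linear and hence an $\cL$-map; it is bijective because $\sigma$ is a bijection and the $f_i$ form a basis of~$E_2$.

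Finally, I would verify $\phi_\cL=\tau$. Since $\phi$ is bijective, \cref{P-LattHom}(a) gives that $\phi_\cL$ is a lattice isomorphism. The two maps agree on $\subspace{e_i}$ for each~$i$ and on every $\subspace{e_1+ce_2}$ with $c\in\F$ by the very definition of~$\sigma$. An induction on the number of nonzero coordinates of~$v$—invoking once more the three-dimensional incidence used to prove additivity of~$\sigma$, applied iteratively to the pairs of basis vectors $(e_1,e_j)$—shows $\phi_\cL(\subspace{v})=\tau(\subspace{v})$ for every nonzero $v\in E_1$. Because both maps are join-preserving and every subspace is the join of its 1-spaces, this equality propagates to all of $\cL(E_1)$, completing the proof.
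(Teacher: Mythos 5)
First, be aware that the paper does not actually prove this theorem: it is imported as the Fundamental Theorem of Projective Geometry with a pointer to \cite[Thm.~1]{Put}, so there is no in-paper argument to compare yours against. Your sketch follows the standard route of that reference (fix a basis, read off a field automorphism from the action on a projective line, use a third dimension to verify additivity and multiplicativity, then coordinatize), and your disposal of the exceptional case $q=n=2$ via $\GL_2(\F_2)\cong S_3$ acting as the full symmetric group on the three $1$-spaces is correct.

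There is, however, one concrete step that fails as written: the construction of $\sigma$. If the vectors $f_i\in\tau(\subspace{e_i})$ are chosen arbitrarily, the scalar $\sigma(c)$ defined by $\tau(\subspace{e_1+ce_2})=\subspace{f_1+\sigma(c)f_2}$ need not satisfy $\sigma(1)=1$. Take $E_1=E_2$, $\tau=\id$, $f_1=e_1$ and $f_2=\lambda e_2$ with $\lambda\neq1$: then $\tau(\subspace{e_1+ce_2})=\subspace{f_1+\lambda^{-1}cf_2}$, so $\sigma(c)=\lambda^{-1}c$, which is additive but not multiplicative; moreover $\phi(e_1+ce_2)=\sigma(1)f_1+\sigma(c)f_2$ then spans a line different from $\subspace{f_1+\sigma(c)f_2}$, so even the verification $\phi_\cL=\tau$ on these $1$-spaces breaks. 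The repair is the standard normalization you omitted: keep $f_1$ arbitrary but rescale each $f_i$ for $i\geq2$ so that $\tau(\subspace{e_1+e_i})=\subspace{f_1+f_i}$; this forces $\sigma(1)=1$ and the incidence argument for multiplicativity then goes through. A second, smaller omission of the same flavor: you define $\sigma$ only from the pair $(e_1,e_2)$ but then apply it to every coordinate in $\phi\bigl(\sum_i c_ie_i\bigr)=\sum_i\sigma(c_i)f_i$, so you must also check, inside the $3$-spaces $\subspace{e_1,e_2,e_j}$, that each pair $(e_1,e_j)$ yields the same automorphism. Both fixes are routine, but without them the multiplicativity of $\sigma$ and the final identity $\phi_\cL=\tau$ do not hold as stated.
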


\begin{cor}\label{T-SemiLinIso}
Let $\dim E_1=\dim E_2=n$, where $n\geq3$ or $q=n=2$.
Let $\tau:E_1\longrightarrow E_2$ be an $\cL$-map and
suppose there exist vectors $v_1,\ldots,v_n$ of $E_1$ such that
$\tau(v_1),\ldots,\tau(v_n)$ are linearly independent.
Then~$\tau$ is bijective and there exists a semi-linear isomorphism $\phi:E_1\longrightarrow E_2$
such that $\tau\sim_\cL\phi$.
In particular, if $q=2$, then $\tau=\phi$ is a linear isomorphism.
\end{cor}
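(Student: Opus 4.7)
The plan is to first establish that $\tau$ is bijective, then invoke the Fundamental Theorem of Projective Geometry (\cref{T-FTPG}) via \cref{P-LattHom}, and finally refine the conclusion in the special case $q=2$.

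First I would prove bijectivity. Since $\tau$ is an $\cL$-map, $\tau(E_1)$ is a subspace of $E_2$ that contains the linearly independent vectors $\tau(v_1),\ldots,\tau(v_n)$; as $\dim E_2=n$, this forces $\tau(E_1)=E_2$, so $\tau$ is surjective as a set map. Because $|E_1|=q^n=|E_2|$, a surjection between equal-cardinality finite sets is automatically a bijection, so $\tau$ is bijective.

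Next, being a bijective $\cL$-map, $\tau$ is an $\cL$-isomorphism, so by \cref{P-LattHom}(b) the induced map $\tau_\cL$ is a lattice isomorphism $\cL(E_1)\to\cL(E_2)$. The hypothesis on $n$ and $q$ is precisely what is needed to apply \cref{T-FTPG}, which produces a semi-linear map $\phi\colon E_1\to E_2$ with $\phi_\cL=\tau_\cL$; that is, $\tau\sim_\cL\phi$. A brief check then confirms that $\phi$ itself is bijective: its kernel is an $\F$-subspace (by semi-linearity) that is sent to $0$ by $\phi_\cL=\tau_\cL$, hence is $0$ by injectivity of $\tau_\cL$, while $\phi_\cL(E_1)=E_2$ delivers surjectivity.

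For $q=2$ I would observe that $\Aut(\F_2)=\{\id\}$, so any semi-linear map is automatically $\F$-linear, making $\phi$ a linear isomorphism. To upgrade $\tau\sim_\cL\phi$ to the pointwise identity $\tau=\phi$, I would exploit the fact that over $\F_2$ each $1$-dimensional subspace contains a unique nonzero vector: for any $v\in E_1\setminus\{0\}$, the equality $\langle\phi(v)\rangle=\phi_\cL(\langle v\rangle)=\tau_\cL(\langle v\rangle)=\langle\tau(v)\rangle$ forces $\phi(v)=\tau(v)$, and $\phi(0)=0=\tau(0)$ takes care of the origin. No step presents a serious obstacle; the main task is simply recognizing which previously established result does the heavy lifting at each stage, after which bijectivity combined with \cref{T-FTPG} delivers essentially everything, and the $q=2$ refinement reduces to the cardinality peculiarity of $1$-dimensional subspaces over $\F_2$.
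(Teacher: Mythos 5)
Your proposal is correct and follows essentially the same route as the paper: surjectivity of $\tau$ from the fact that $\tau(E_1)$ is a subspace containing $n$ linearly independent vectors, bijectivity by equal cardinality, and then \cref{T-FTPG} applied to the lattice isomorphism $\tau_\cL$. The only cosmetic difference is that the paper treats $n=q=2$ by a direct ad hoc argument and reserves \cref{T-FTPG} for $n\geq3$, whereas you invoke \cref{T-FTPG} uniformly (which its stated hypotheses permit) and spell out the $q=2$ pointwise identification $\tau=\phi$ more explicitly than the paper does.
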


\begin{proof}
Since $\tau(E_1)$ is a subspace and contains $\tau(v_1),\ldots,\tau(v_n)$, we conclude that $\tau(E_1)=E_2$.
Hence~$\tau$ is a bijection.
For $n=2=q$ it is clear that~$\tau$ is a linear isomorphism
(any nonzero vector in $E_1$ or $E_2$ is the sum of the other two nonzero vectors).
For $n\geq3$ we may use \cref{T-FTPG}.
\end{proof}

We return to general $\cL$-maps and list some basic facts.

\begin{prop}\label{P-EquivMaps}
Let $\phi,\psi:E_1\longrightarrow E_2$ be $\cL$-maps such that $\phi\sim_\cL\psi$.
\begin{alphalist}
\item For all $v\in E_1$ there exist $\lambda_v\in\F^*$ such that $\phi(v)=\lambda_v\psi(v)$.
\item $\phi^{-1}(V)=\psi^{-1}(V)$ for all $V\in\cL(E_2)$.
\item If $\psi$ is an $\cL$-isomorphism, then so is $\phi$, and $\phi^{-1}\sim_\cL\psi^{-1}$.
\item If $\psi$ is injective (resp.~surjective), then so is~$\phi$.
\item If $\phi$ and $\psi$ are linear, then $\phi=\lambda\psi$ for some $\lambda\in\F^*$.
\end{alphalist}
\end{prop}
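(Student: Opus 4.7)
The plan is to tackle the five parts in order, letting each build on the previous ones. The organizing observation throughout is that $\phi_\cL = \psi_\cL$ at the subspace level must translate into tight scalar-level comparisons between $\phi(v)$ and $\psi(v)$.

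For (a), applying $\phi_\cL = \psi_\cL$ to $\langle v \rangle$ and invoking \eqref{e-phiV} gives $\langle \phi(v) \rangle = \langle \psi(v) \rangle$, forcing $\phi(v)$ and $\psi(v)$ to lie on the same line; so $\phi(v) = \lambda_v \psi(v)$ for some $\lambda_v \in \F^*$ (take $\lambda_v = 1$ in the degenerate case where both sides vanish). Part (b) then follows immediately: since $\lambda_v \in \F^*$ and $V \in \cL(E_2)$ is closed under nonzero scalars, $\phi(v) \in V \Longleftrightarrow \psi(v) \in V$. For (c), provided $E_2 \neq 0$ (the other case is vacuous), \cref{P-LattHom}(b) says ``$\psi$ an $\cL$-isomorphism'' is equivalent to ``$\psi_\cL$ a lattice isomorphism''; since $\phi_\cL = \psi_\cL$, the same proposition returns $\phi$ as an $\cL$-isomorphism. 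The statement $\phi^{-1} \sim_\cL \psi^{-1}$ then reduces to checking the identity $(\phi^{-1})_\cL = (\phi_\cL)^{-1}$ for bijective $\cL$-maps, which together with its $\psi$-analogue and $\phi_\cL = \psi_\cL$ yields the claim.

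For (d), surjectivity is free: $\phi(E_1) = \phi_\cL(E_1) = \psi_\cL(E_1) = E_2$. Injectivity needs two ingredients: (i) for any $\cL$-map, cardinality forces $\phi|_{\langle v \rangle}$ to be a bijection onto $\langle \phi(v) \rangle$ whenever $\phi(v) \neq 0$, since both have $q$ elements and the map is onto by \eqref{e-phiV}; (ii) if $\psi$ is injective then $\psi_\cL$ is injective (a vector in $V_1 \setminus V_2$ maps, by injectivity of $\psi$, into $\psi(V_1) \setminus \psi(V_2)$), so $\phi_\cL$ is also injective. Now from $\phi(v) = \phi(w)$ we obtain $\phi_\cL(\langle v \rangle) = \phi_\cL(\langle w \rangle)$; injectivity of $\phi_\cL$ yields $\langle v \rangle = \langle w \rangle$, and then (i) (or, if $\phi(v) = 0$, applying injective $\phi_\cL$ to $\phi_\cL(\langle v \rangle) = \phi_\cL(0)$) gives $v = w$.

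Part (e) is the main obstacle, since (a) only furnishes a priori $v$-dependent scalars $\lambda_v$ and we must show they collapse to a single value. Assuming $\psi \neq 0$ (otherwise $\phi_\cL = 0$ forces $\phi = 0$ and any $\lambda$ works), fix $v_0$ with $\psi(v_0) \neq 0$ and set $\lambda := \lambda_{v_0}$. For $w$ with $\psi(v_0), \psi(w)$ $\F$-linearly independent, expanding $\phi(v_0 + w)$ in two ways -- by $\F$-linearity as $\lambda \psi(v_0) + \lambda_w \psi(w)$, by (a) as $\lambda_{v_0 + w}(\psi(v_0) + \psi(w))$ -- pins $\lambda_w = \lambda$ via independence. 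For the remaining case $\psi(w) = c\psi(v_0)$ with $c \in \F^*$, write $w = c v_0 + u$ with $u \in \ker\psi$; then $\phi(u) = \lambda_u \psi(u) = 0$, and $\F$-linearity of $\phi$ gives $\phi(w) = c\phi(v_0) = \lambda \psi(w)$. The subtle point is precisely this linearly-dependent case: one cannot compare $\lambda$ and $\lambda_w$ directly, and must route through $\ker \psi$ to conclude.
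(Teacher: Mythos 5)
Your proposal is correct and follows essentially the same route as the paper's proof: (a) via~\eqref{e-phiV}, (b) from the scalars of (a), (c)--(d) from bijectivity/surjectivity at the subspace level, and (e) by forcing the pointwise scalars of (a) to coincide using linearity, linear independence of images under $\psi$, and a separate treatment of $\ker\psi$. The only cosmetic differences are that the paper obtains injectivity in (d) by simply applying (c) to the corestriction $\psi:E_1\longrightarrow\psi(E_1)$ rather than arguing directly with $\phi_\cL$ on $1$-spaces, and in (e) it works with a basis of a complement of $\ker\psi$ rather than a fixed reference vector $v_0$; both variants are sound.
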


\begin{proof}
(a) It suffices to consider $v\neq0$.
Note that $\phi(v)=0\Longleftrightarrow\psi(v)=0$, and in this case we may choose $\lambda_v=1$.
In the case $\psi(v)\neq0\neq\phi(v)$ the result follows from~\eqref{e-phiV}.
\\
(b) For $v\in E_1$ let $\lambda_v\in\F^*$ be as in~(a).
For any $V\in\cL(E_2)$ we have
$\phi^{-1}(V)=\{v\in E_1\mid \phi(v)\in V\}=\{v\in E_1\mid \lambda_v\psi(v)\in V\}=\{v\in E_1\mid \psi(v)\in V\}=\psi^{-1}(V)$.
\\
(c) If~$\psi$ is an $\cL$-isomorphism, then the lattices $\cL(E_1)$ and $\cL(E_2)$ are isomorphic (see \cref{R-SIso}).
Next, $E_2=\psi(E_1)=\phi(E_1)$, and thus~$\phi$ is bijective as well.
Furthermore, $\psi_\cL$ and $\phi_\cL$ are lattice isomorphisms from~$\cL(E_1)$ to $\cL(E_2)$ satisfying
$(\psi^{-1})_\cL=(\psi_\cL)^{-1}=(\phi_\cL)^{-1}=(\phi^{-1})_\cL$.
\\
(d) The statement about surjectivity is clear since $\phi(E_1)=\psi(E_1)$. For injectivity use~(c) with $E_2=\psi(E_1)$.
\\
(e)
Let $W=\ker\psi$, hence also $W=\ker\phi$, and let $E_1=V\oplus W$.
Let $v_1,\ldots,v_r$ be a basis of~$V$.
By~(a) there exist $\lambda_i\in\F^*$ such that $\phi(v_i)=\lambda_i\psi(v_i)$ for $i\in[r]$.
Furthermore, $\phi(v_1+v_i)=\hat{\lambda}\psi(v_1+v_i)$ for some $\hat{\lambda}\in\F^*$.
Linearity implies $\psi(\lambda_1v_1+\lambda_iv_i)=\phi(v_1+v_i)=\hat{\lambda}\psi(v_1+v_i)=\psi(\hat{\lambda}v_1+\hat{\lambda}v_i)$, and injectivity of $\psi$ on~$V$ yields $\lambda_1=\lambda_i=\hat{\lambda}$ for all $i\in[r]$.
Hence $\phi|_V=\hat{\lambda}\psi|_V$.
Now we obtain $\phi(v+w)=\phi(v)+0=\hat{\lambda}\psi(v)+0=\hat{\lambda}\psi(v)+\hat{\lambda}\psi(w)=\hat{\lambda}\psi(v+w)$ for all
$v\in V$ and $w\in W$, and this proves the desired statement.
\end{proof}

The following results show how we may alter an $\cL$-map without changing the induced lattice homomorphism.
Part~(a) also shows that $\cL$-maps are $\cL$-equivalent if they agree on the 1-spaces.

\begin{prop}\label{P-S1dim}
Let $\psi:E_1\longrightarrow E_2$ be an $\cL$-map.
\begin{alphalist}
\item Let $\phi:E_1\longrightarrow E_2$ be a map such that $\phi(0)=0$ and $\phi(\subspace{v})=\psi(\subspace{v})$ for all
        $v\in E_1$. Then $\phi$ is an $\cL$-map and $\phi\sim_\cL\psi$.
\item Suppose~$\psi$ is an $\cL$-isomorphism. Fix $w\in E_1\setminus0$ and $\tau\in\F^*$ and set $\hat{w}=\psi^{-1}(\tau\psi(w))$.
Then $\subspace{\hat{w}}=\subspace{w}$.
Define the map $\phi:E_1\longrightarrow E_2$ via
\[
   \phi(v)=\psi(v)\ \text{ for } v\in E_1\setminus \subspace{w},\quad
   \phi(\mu w)=\psi(\mu \hat{w})\ \text{ for }\mu\in\F.
\]
Then $\phi$ is an $\cL$-isomorphism and $\phi\sim_\cL\psi$.
If $\tau=1$, then $\phi=\psi$.
\end{alphalist}
\end{prop}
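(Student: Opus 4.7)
The plan is to prove part~(a) first and then use it as the main tool to prove part~(b). The key observation for~(a) is that any subspace $V\in\cL(E_1)$ is the union of its 1-spaces together with $\{0\}$, so the hypothesis $\phi(\subspace{v})=\psi(\subspace{v})$ for all $v\in E_1$ already forces $\phi$ and $\psi$ to agree, as set-valued maps, on every subspace. Concretely, I would show $\phi(V)=\psi(V)$ by proving both containments: for $\phi(V)\subseteq\psi(V)$, note that if $v\in V$, then $\phi(v)\in\phi(\subspace{v})=\psi(\subspace{v})\subseteq\psi(V)$; for the reverse, take $y=\psi(v)\in\psi(V)$, observe $y\in\psi(\subspace{v})=\phi(\subspace{v})$, and conclude $y=\phi(v')$ for some $v'\in\subspace{v}\subseteq V$. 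Since $\psi(V)\in\cL(E_2)$, this simultaneously shows that $\phi$ is an $\cL$-map and that $\phi_\cL=\psi_\cL$, which is exactly $\phi\sim_\cL\psi$.

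For part~(b), I would first verify the claim $\subspace{\hat{w}}=\subspace{w}$. Since $\psi$ is an $\cL$-isomorphism we have $\psi(\subspace{w})=\subspace{\psi(w)}$ by~\eqref{e-phiV}, and $\tau\psi(w)\in\subspace{\psi(w)}$, so $\hat{w}=\psi^{-1}(\tau\psi(w))\in\subspace{w}$. Because $w\neq0$ and $\psi$ is bijective with $\psi(0)=0$, we have $\psi(w)\neq0$, hence $\tau\psi(w)\neq0$ and thus $\hat{w}\neq0$, giving $\subspace{\hat{w}}=\subspace{w}$.

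Next, I would reduce the bulk of~(b) to part~(a) by checking that the modified map $\phi$ satisfies $\phi(\subspace{v})=\psi(\subspace{v})$ for every $v\in E_1$. There are two cases: if $v\notin\subspace{w}$, then every nonzero multiple of $v$ also lies outside $\subspace{w}$, so $\phi$ coincides with $\psi$ on all of $\subspace{v}$ by definition; if instead $v\in\subspace{w}\setminus0$, then $\subspace{v}=\subspace{w}$ and $\phi(\subspace{w})=\{\psi(\mu\hat{w}):\mu\in\F\}=\psi(\subspace{\hat{w}})=\psi(\subspace{w})$, using the equality $\subspace{\hat{w}}=\subspace{w}$ established above. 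Together with $\phi(0)=0$, part~(a) then yields that $\phi$ is an $\cL$-map with $\phi\sim_\cL\psi$, and \cref{P-EquivMaps}(c) upgrades this to the conclusion that $\phi$ is an $\cL$-isomorphism. Finally, if $\tau=1$, then $\hat{w}=\psi^{-1}(\psi(w))=w$, so $\phi(\mu w)=\psi(\mu w)$ for all $\mu\in\F$, and $\phi=\psi$ on all of~$E_1$.

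The proof is essentially bookkeeping; the only mild subtlety is the reverse containment $\psi(V)\subseteq\phi(V)$ in part~(a), where one must be careful to produce a preimage in $\subspace{v}$ rather than merely in~$V$, so that the 1-space hypothesis can be applied. Everything else, including the verification in part~(b), follows routinely once the 1-space reduction of~(a) is in hand.
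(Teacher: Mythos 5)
Your proposal is correct and follows essentially the same route as the paper: part (a) via the observation that a subspace is the union of its 1-spaces, and part (b) by verifying the hypothesis of (a) for $\phi$ through the same two-case analysis ($v\notin\subspace{w}$ versus $v\in\subspace{w}\setminus0$). The only cosmetic differences are that you deduce $\subspace{\hat{w}}=\subspace{w}$ from $\hat{w}\in\subspace{w}\setminus0$ rather than from injectivity applied to $\psi(\subspace{w})=\psi(\subspace{\hat{w}})$, and you invoke \cref{P-EquivMaps}(c) for bijectivity of $\phi$ where the paper notes it directly.
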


\begin{proof}
(a) is immediate from $\phi(V)=\bigcup_{v\in V}\phi(\subspace{v})=\bigcup_{v\in V}\psi(\subspace{v})=\psi(V)$ for all $V\in\cL(E_1)$.
\\
(b) Bijectivity of~$\psi$ implies $\hat{w}\neq0$ and $\psi(\hat{w})=\tau\psi(w)$, thus
$\psi(\subspace{w})=\subspace{\psi(w)}=\subspace{\psi(\hat{w})}=\psi(\subspace{\hat{w}})$.
This in turn implies $\subspace{w}=\subspace{\hat{w}}$.
The map $\phi$ is clearly bijective and satisfies $\phi(0)=0$.
We show that~$\phi$ satisfies the condition of (a).
First let $v\not\in\subspace{w}$. Then also $\lambda v\not\in \subspace{w}$ for all $\lambda\in\F^*$ and
$\phi(\subspace{v})=\{\phi(\lambda v)\mid\lambda\in\F\}=\{\psi(\lambda v)\mid \lambda\in\F\}=\psi(\subspace{v})$.
Next, for $v=\mu w$ with $\mu\neq0$ we have
$\phi(\subspace{v})=\{\phi(\lambda\mu w)\mid \lambda\in\F\}=\{\psi(\lambda\mu\hat{w})\mid \lambda\in\F\}
  =\psi(\subspace{\hat{w}})=\psi(\subspace{w})=\psi(\subspace{v})$.
Thus we may apply~(a) and the statement follows.
\end{proof}

We now turn to $\cL$-maps between \qM{}s. There are different options for such a map to respect the \qM{} structure.
For~(a) and~(b) below we adopt the terminology known for classical matroids; see, e.g.\  \cite[Def.~8.1.1 and Def.~9.1.1]{Wh86}.
Our notion of rank-preserving maps in~(c), however, is different from rank-preserving weak maps for classical matroids:
the latter are weak maps that preserve the rank of the matroid; see \cite[p.~260]{Wh86}.
The definition below will be convenient for us.

\begin{defi}\label{D-StrMap}
Let $\cM_i=(E_i,\rho_i)$ be \qM{}s with flats $\cF_i:=\cF(\cM_i)$.
Let $\phi:E_1\longrightarrow E_2$ be an $\cL$-map. We define the following \emph{types}.
\begin{alphalist}
\item $\phi$ is a \emph{strong map} from $\cM_1$ to $\cM_2$ if $\phi^{-1}(F)\in\cF(\cM_1)$
          for all $F\in\cF(\cM_2)$ (this implies in particular that $\phi^{-1}(F)$ is a subspace of~$E_1$).
\item  $\phi$ is a \emph{weak map} from $\cM_1$ to $\cM_2$ if $\rho_2(\phi(V))\leq\rho_1(V)$ for all $V\in\cL(E_1)$.
\item $\phi$ is \emph{rank-preserving} from $\cM_1$ to $\cM_2$ if $\rho_2(\phi(V))=\rho_1(V)$ for all $V\in\cL(E_1)$.
\end{alphalist}
For any $\cL$-map $\phi:E_1\longrightarrow E_2$ we will also use the notation $\phi:\cM_1\longrightarrow\cM_2$.
This allows us to discuss its type.
\end{defi}

Note that each of the types above are actually properties of the induced map $\phi_\cL$.
This raises the question as to whether one should define maps between \qM{}s $(E_1,\rho_1)$ and $(E_2,\rho_2)$ as maps
(with certain properties) between the underlying subspace lattices $\cL(E_1)$ and $\cL(E_2)$.
However lattice homomorphisms are too restrictive as they exclude some non-injective maps  (see also \cref{R-LattHomo}(b)), while
simply order-preserving maps appear to be too general.
In \cref{S-Lclass} we will briefly consider the setting where the maps are those induced by $\cL$-maps.
Note that the distinction of maps between the ground spaces versus maps between the subspace lattices does not occur for classical
matroids because a map on a set~$S$ is uniquely determined by its induced map on the subset lattice of~$S$.

We return to \cref{D-StrMap}.
Clearly, the composition of maps of the same type is again a map of that type.
Furthermore, $\cL$-equivalent $\cL$-maps are of the same type (see \cref{P-EquivMaps}(b) for strong maps).
Note, however, that if~$\phi:\cM_1\longrightarrow\cM_2$ is a bijective strong (resp.\ weak) map, then $\phi^{-1}:\cM_2\longrightarrow\cM_1$
may not be strong (resp.\ weak):
take for instance the identity map $\cU_{k}(\F^n)\longrightarrow\cU_{k-1}(\F^n)$.
Being rank-preserving and being strong are not related:
there exist strong maps that are not rank-preserving (e.g., the identity map from any nontrivial \qM{} $\cM=(E,\rho)$ to the trivial \qM{} on~$E$)
and there exist rank-preserving maps that are not strong (e.g., $\phi:\F_2^2\longrightarrow\F_2^2,\ (x,y)\longmapsto (x,0)$ and where
$\cM_1$ and $\cM_2$ are the \qM{}s on $\F_2^2$ of rank~$1$ with $\subspace{e_2}$ and $\subspace{e_1+e_2}$ as the unique flat of rank~$0$, respectively).
Unsurprisingly, weak maps are in general not strong: take for instance the identity on $\F_2^4$, which induces a weak,
but not strong map from $\cU_{2}(\F_2^4)$ to the \qM{} $\cM$ from \cref{E-NonRepr}.
However, it can be shown that -- just like in the classical case \cite[Lemma~8.1.7]{Wh86} -- strong maps are weak.
This and other properties and characterizations of strong maps can be found in~\cite{Ja22}. We do not need those facts in this paper.
The following simple result will suffice for our considerations.

\begin{prop}\label{P-StroWeak}
Let $\cM_i=(E_i,\rho_i)$ be \qM{}s and $\phi:E_1\longrightarrow E_2$ be an $\cL$-isomorphism.
Consider~$\phi$ as a map from $\cM_1$ to $\cM_2$.
Then
 \[
             \phi \text{ and }\phi^{-1}\text{ are weak maps}\Longleftrightarrow \phi\text{ is rank-preserving }\Longleftrightarrow
             \phi\text{ and }\phi^{-1}\text{ are strong maps.}
\]
\end{prop}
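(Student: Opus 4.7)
The plan is to prove the two biconditionals separately, treating the weak-versus-rank-preserving equivalence first and the rank-preserving-versus-strong equivalence second.

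For the equivalence $\phi, \phi^{-1}$ weak $\Leftrightarrow$ $\phi$ rank-preserving, the forward direction follows by applying the weak inequality for $\phi^{-1}$ to the subspace $\phi(V)\in\cL(E_2)$: this gives $\rho_1(V)=\rho_1(\phi^{-1}(\phi(V)))\leq\rho_2(\phi(V))$, and combining with the weak inequality $\rho_2(\phi(V))\leq\rho_1(V)$ produces equality. The converse is immediate: rank-preservation of $\phi$ is stronger than weakness of $\phi$, and substituting $V=\phi^{-1}(W)$ into the rank-preserving equation shows $\phi^{-1}$ is also rank-preserving, hence weak.

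For the equivalence rank-preserving $\Leftrightarrow$ $\phi, \phi^{-1}$ strong, the direction $(\Rightarrow)$ uses the characterization from \cref{T-AxFlats} that $V\in\cF$ iff $\rho(V)<\rho(W)$ for every $W\supsetneq V$. Given $F\in\cF(\cM_2)$, set $V=\phi^{-1}(F)$; this is a subspace by \cref{R-SIso}. For any $W\in\cL(E_1)$ with $W\supsetneq V$, injectivity of $\phi$ gives $\phi(W)\supsetneq F$, so $\rho_2(\phi(W))>\rho_2(F)$ since $F$ is a flat. Applying rank-preservation twice converts this to $\rho_1(W)>\rho_1(V)$, hence $V\in\cF(\cM_1)$. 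Since $\phi^{-1}$ is also rank-preserving (by Equivalence~1), the same argument shows $\phi^{-1}$ is strong.

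For the direction $(\Leftarrow)$, the two strong conditions give $\phi_\cL(\cF(\cM_1))\subseteq\cF(\cM_2)$ and $\phi^{-1}_\cL(\cF(\cM_2))\subseteq\cF(\cM_1)$, so $\phi_\cL$ restricts to an order-preserving bijection $\cF(\cM_1)\to\cF(\cM_2)$ whose inverse is also order-preserving. Using that $\phi_\cL$ is a lattice isomorphism of subspace lattices (\cref{P-LattHom}(a)), together with the intersection formula for closure from \cref{T-FlatsMatroid}, one verifies $\phi(\overline{V}^{(1)})=\overline{\phi(V)}^{(2)}$ for all $V\in\cL(E_1)$. Moreover, a lattice isomorphism between the flat lattices maps maximal chains to maximal chains, so by the Jordan-Dedekind chain condition (\cref{T-FlatsMatroid}(b)) it preserves the height function: $\text{h}_1(F)=\text{h}_2(\phi(F))$ for all $F\in\cF(\cM_1)$. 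Combining these with $\rho_i(V)=\text{h}_i(\overline{V})$ yields
\[
\rho_2(\phi(V))=\text{h}_2(\overline{\phi(V)}^{(2)})=\text{h}_2(\phi(\overline{V}^{(1)}))=\text{h}_1(\overline{V}^{(1)})=\rho_1(V).
\]

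The main obstacle will be the direction $(\Leftarrow)$ of the second equivalence: one must carefully juggle three structures (the subspace lattice, the flat lattice, and the closure operator) and verify that $\phi_\cL$ restricts to a lattice isomorphism on flats that additionally commutes with closure. All the needed tools are in place from \cref{T-AxFlats,T-FlatsMatroid,P-LattHom}, but they must be assembled in the right order.
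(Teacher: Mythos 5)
Your proof is correct and follows essentially the same route as the paper's: the first equivalence by composing the two weak inequalities, the forward direction of the second via the rank characterization of flats from \cref{T-AxFlats}, and the backward direction via the induced isomorphism of flat lattices together with the height function from \cref{T-FlatsMatroid,T-FlatsRank}. You simply make explicit the step the paper leaves implicit, namely that $\phi$ commutes with the closure operator so that the equality of ranks extends from flats to arbitrary subspaces.
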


\begin{proof}
Recall from \cref{R-SIso} that $\phi^{-1}$ is also an $\cL$-map.
The first equivalence is clear. Consider the second equivalence.
\\
 ``$\Rightarrow$'' Let $\phi$ be a rank-preserving isomorphism. Then $\rho_2(\phi(V))=\rho_1(V)$ and $\dim\phi(V)=\dim V$ for all $V\in\cL(E_1)$.
Using \cref{T-AxFlats} we conclude that~$V$ is a flat in~$\cM_1$ iff~$\phi(V)$ is a flat in~$\cM_2$.
\\
``$\Leftarrow$'' Let now $\phi$ and $\phi^{-1}$ be strong maps.
Then
$\phi(\cF(\cM_1))=\cF(\cM_2)$, that is, $\phi$ induces an isomorphism between the lattices of flats
of~$\cM_1$ and~$\cM_2$.
Using the height function on these lattices (see \cref{T-FlatsMatroid}(c) and \cref{T-FlatsRank}(a))
we conclude that $\rho_1(F)=\rho_2(\phi(F))$ for all $F\in\cF(\cM_1)$ and thus $\rho_1(V)=\rho_2(\phi(V))$ for all $V\in\cL(E_1)$.
Hence~$\phi$ is rank-preserving.
\end{proof}

Just like there exist linear maps between any vector spaces (over the same field), there exist weak and strong maps between any \qM{}s~$\cM_1$ and~$\cM_2$.

\begin{exa}\label{E-WSMaps}
Let $\cM_i=(E_i,\rho_i),\,i=1,2,$ be \qM{}s.
\begin{alphalist}
\item The zero map $\phi_0$ from~$E_1$ to~$E_2$ is a weak map from~$\cM_1$ to~$\cM_2$. \
        It is also a strong map since $\phi_0^{-1}(W)=E_1\in\cF(\cM_1)$ for all $W\leq E_2$.
\item Let $X=\{x\in E_1\mid \rho_1(\subspace{x})=0\}$, that is,~$X$ is the closure of $\{0\}$ in~$\cM_1$ in the sense of \cref{T-AxFlats}.
        In particular, $X\leq E_1$.
        Let $z\in E_2\setminus\{0\}$.
        As in \cref{E-qMMap}(a) choose vectors $v_1,\ldots,v_\ell\in E_1\setminus X$ such that $\subspace{v_1},\ldots,\subspace{v_\ell}$ are the distinct lines in $E$ that are not in~$X$ and
        consider the map~$\phi$ as in that example.
        Then~$\phi$ is a weak map. Indeed, for any $V\leq X$ we have $\rho_1(V)=0=\rho_2(\{0\})=\rho_2(\phi(V))$, while for $V\in\cL(E_1)\setminus\cL(X)$
        we have $\rho_1(V)\geq 1\geq\rho_2(\subspace{z})=\rho_2(\phi(V))$.
        Moreover, the pre-images in \cref{E-qMMap}(a) show that~$\phi$ is strong.
        Note that if $X=E_1$, i.e.,~$\cM_1$ is the trivial \qM{}, then~$\phi$ is the zero map.
\end{alphalist}
\end{exa}

We now turn to minors of \qM{}s and determine the type of the corresponding maps.
For restrictions and contractions of \qM{}s, discussed next, see for instance \cite[Def.~5.1 and Thm.~5.2]{GLJ22Gen}.

\begin{prop}\label{P-DelContr}
Let $\cM=(E,\rho)$ be a $q$-matroid and let $X\leq E$.
\begin{alphalist}
\item Let $\cM|_X$ be the restriction of~$\cM$ to~$X$.
         Then the embedding $\iota:X\longrightarrow E,\ x\longmapsto x$, is a linear strong and rank-preserving (hence weak)
         map from $\cM|_X$ to~$\cM$.
\item Let $\cM/X$ be the contraction of~$X$ from~$\cM$. Then the projection $\pi:E\longrightarrow E/X,\ x\longmapsto x+X$ is a
        linear strong and weak map from~$\cM$ to $\cM/X$.
\end{alphalist}
\end{prop}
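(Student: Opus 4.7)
The plan is straightforward verification from the definitions of restriction and contraction. Both $\iota$ and $\pi$ are linear, hence automatically $\cL$-maps, so only the strong/weak/rank-preserving properties require checking.

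For part~(a), recall that $\cM|_X=(X,\rho|_X)$ where $\rho|_X$ is simply the restriction of $\rho$ to $\cL(X)$. Rank-preservation is then immediate from $\rho(\iota(V))=\rho(V)=\rho|_X(V)$ for every $V\leq X$. For the strong property, the task is to show that for any flat $F$ of $\cM$, the preimage $\iota^{-1}(F)=F\cap X$ is a flat of $\cM|_X$. I would invoke the characterization in \cref{T-AxFlats}: $F\cap X$ is a flat of $\cM|_X$ iff $\rho(F\cap X)<\rho(W)$ for every $W$ with $F\cap X<W\leq X$. The key identity is that for such a $W$ one has $F\cap W=F\cap X$, since $W\leq X$ forces $F\cap W\leq F\cap X$ while $F\cap X\leq W$ gives the reverse inclusion. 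Picking any $w\in W\setminus(F\cap X)$ yields $w\in X\setminus F$, so because $F$ is a flat of $\cM$ we have $\rho(F+\subspace{w})>\rho(F)$, whence $\rho(F+W)>\rho(F)$. Applying submodularity (R3) to $F$ and $W$ and substituting $F\cap W=F\cap X$ then gives $\rho(W)\geq\rho(F+W)+\rho(F\cap X)-\rho(F)>\rho(F\cap X)$, as required.

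For part~(b), recall that $\cM/X=(E/X,\rho_{/X})$ with $\rho_{/X}(W)=\rho(\pi^{-1}(W))-\rho(X)$ for $W\leq E/X$. The weak property becomes $\rho(V+X)-\rho(X)\leq\rho(V)$ for every $V\leq E$, which follows at once from submodularity of $\rho$ applied to $V$ and $X$ together with $\rho(V\cap X)\geq 0$. For the strong property, I would first establish the standard fact that a subspace $F'\leq E/X$ is a flat of $\cM/X$ if and only if $F'=F/X$ for some flat $F$ of $\cM$ with $X\leq F$. This is an easy application of \cref{T-AxFlats}: if $F\supseteq X$ is a flat of $\cM$ and $\pi(v)\notin F/X$, then $v\notin F$ and the strict increase $\rho(F+\subspace{v})>\rho(F)$ descends to a strict increase of $\rho_{/X}$; conversely, starting from a flat $F'$ of $\cM/X$ and pulling back, any $v\notin F:=\pi^{-1}(F')$ gives $\pi(v)\notin F'$ and the same computation transfers back. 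Granting this, $\pi^{-1}(F')=F$ is a flat of $\cM$ for every flat $F'$ of $\cM/X$, so $\pi$ is strong.

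The only real step requiring care is the submodularity argument in~(a) producing the strict inequality; the remaining parts are essentially bookkeeping against the definitions of restriction and contraction.
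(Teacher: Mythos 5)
Your proof is correct. Part (b) matches the paper's argument essentially verbatim: weakness is the same submodularity estimate $\rho(V+X)-\rho(X)\leq\rho(V)-\rho(V\cap X)$, and for strongness the paper likewise just transfers the strict rank increase through the identity $\tilde{\rho}(\pi(V))=\rho(V+X)-\rho(X)$ together with $\pi^{-1}(F+\subspace{v+X})=\pi^{-1}(F)+\subspace{v}$; you package this as the flat correspondence $F'\mapsto\pi^{-1}(F')$, of which only one direction is actually needed. Part (a) is where you genuinely diverge. To show $F\cap X\in\cF(\cM|_X)$ the paper argues via the closure operator: any $v\in X$ with $\rho((F\cap X)+\subspace{v})=\rho(F\cap X)$ lies in $\overline{F\cap X}^{\cM}\subseteq\overline{F}^{\cM}=F$, hence in $F\cap X$. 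This leans on monotonicity of the closure operator, which the paper invokes without proof. Your route instead verifies the rank characterization of flats from \cref{T-AxFlats} directly: for $F\cap X<W\leq X$ you observe $F\cap W=F\cap X$, force $\rho(F+W)>\rho(F)$ from a single vector $w\in W\setminus F$, and then one application of (R3) yields $\rho(W)>\rho(F\cap X)$. Both arguments are valid; yours is more self-contained, using only (R2)--(R3) and the stated flat criterion, while the paper's is shorter at the cost of citing a closure property established elsewhere.
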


\begin{proof}
(a)
Recall that $\cM|_X=(X,\hat{\rho})$, where $\hat{\rho}(V)=\rho(V)$ for all $V\leq X$.
This shows that~$\iota$ is rank-preserving.
Let $F\in\cF(\cM)$.
Then $\iota^{-1}(F)=F\cap X$ and $\overline{F\cap X}^{\cM}\subseteq\overline{F}^{\cM}=F$, where $\overline{A}^{\cM}$
denotes the closure of the space~$A$ in the $q$-matroid~$\cM$ (for the closure see \cref{T-AxFlats}).
Thus for any $v\in X$ the identity $\rho((F\cap X)+\subspace{v})=\rho(F\cap X)$ implies $v\in F$.
Hence  $\overline{F\cap X}^{\cM|_X}=F\cap X$ and thus $\iota^{-1}(F)\in\cF(\cM|_X)$.
\\
(b) Recall that $\cM/X=(E/X,\tilde{\rho})$, where $\tilde{\rho}(\pi(V))=\rho(V+X)-\rho(X)$.
Thus by submodularity $\tilde{\rho}(\pi(V))\leq\rho(V)-\rho(V\cap X)$, showing that~$\pi$ is weak.
In order to show that~$\pi$ is strong, let $F\in\cF(\cM/X)$. Thus
$\tilde{\rho}(F+\subspace{v+X})>\tilde{\rho}(F)$ for all $v+X\in (E/X)\setminus F$.
Since $\pi^{-1}(F+\subspace{v+X})=\pi^{-1}(F)+\subspace{v}$, this implies $\rho(\pi^{-1}(F)+\subspace{v})>\rho(\pi^{-1}(F))$ for all
$v\in E\setminus\pi^{-1}(F)$. Thus $\pi^{-1}(F)\in\cF(\cM)$.
\end{proof}

Restricting an $\cL$-map to its image does not change its type.

\begin{prop}\label{P-ImageStr}
Let $\cM_i=(E_i,\rho_i)$ be $q$-matroids and $\phi:\cM_1\longrightarrow \cM_2$ be a strong (resp.\ weak or rank-preserving)
map.  Let $X:=\im\phi$.
Then $X$ is a subspace of~$E_2$ and we call the restriction $\cM_2|_X$ the \emph{image of~$\phi$}.
The map $\hat{\phi}: E_1\longrightarrow X,\ v\longmapsto \phi(v)$, is a strong (resp.\ weak or rank-preserving)
map from~$\cM_1$ to $\cM_2|_X$.
In other words, $\phi$ restricts to a map $\cM_1\longrightarrow\cM_2|_X$ of the same type.
\end{prop}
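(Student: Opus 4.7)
The plan is to verify the three claims in order: first that $X = \im\phi$ is a subspace (so that $\cM_2|_X$ makes sense), second that $\hat\phi$ is an $\cL$-map into $X$, and third that $\hat\phi$ inherits the type of $\phi$. The first point is immediate since $\phi$ is an $\cL$-map and $E_1\in\cL(E_1)$, hence $X=\phi(E_1)\in\cL(E_2)$. For the second point, given $V\in\cL(E_1)$ one has $\hat\phi(V)=\phi(V)\in\cL(E_2)$ with $\hat\phi(V)\subseteq X$, so $\hat\phi(V)\in\cL(X)$.

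For the weak and rank-preserving cases the argument is almost a tautology. By definition $\cM_2|_X=(X,\hat\rho_2)$ with $\hat\rho_2=\rho_2|_{\cL(X)}$ (this is part of \cite[Thm.~5.2]{GLJ21} recalled in the proof of \cref{P-DelContr}(a)). Hence for $V\in\cL(E_1)$,
\[
    \hat\rho_2(\hat\phi(V))=\rho_2(\phi(V)),
\]
so the inequality $\rho_2(\phi(V))\leq\rho_1(V)$ (respectively the equality) transfers verbatim to $\hat\phi$.

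The main work, and the part I expect to be slightly delicate, is the strong case. Here I will use the standard characterization of the flats of a restriction: $F\in\cF(\cM_2|_X)$ if and only if $F=G\cap X$ for some $G\in\cF(\cM_2)$. In one direction, given $F\in\cF(\cM_2|_X)$ one sets $G:=\overline{F}^{\cM_2}\in\cF(\cM_2)$; then $F\subseteq G\cap X$, and any $v\in (G\cap X)\setminus F$ would satisfy $\rho_2(F+\subspace{v})=\rho_2(F)$, contradicting the flat property of $F$ in $\cM_2|_X$. Once this characterization is in hand, the key computation is
\[
    \hat\phi^{-1}(F)=\phi^{-1}(F)=\phi^{-1}(G\cap X)=\phi^{-1}(G)\cap\phi^{-1}(X)=\phi^{-1}(G),
\]
where the final equality uses $\im\phi\subseteq X$, so $\phi^{-1}(X)=E_1$. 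Since $\phi$ is strong from $\cM_1$ to $\cM_2$, the right-hand side lies in $\cF(\cM_1)$, which completes the argument.

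The only real obstacle is the flats-of-a-restriction characterization, and this is a short lattice-theoretic observation using \cref{T-AxFlats} that can be inserted inline; the rest of the proof is formal manipulation of images and preimages together with the fact that $\hat\phi$ and $\phi$ agree as set-maps and that $\hat\rho_2$ is just the restriction of $\rho_2$.
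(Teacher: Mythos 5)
Your proof is correct and follows essentially the same route as the paper's: the weak/rank-preserving cases are immediate from $\hat\rho_2=\rho_2|_{\cL(X)}$, and for the strong case both arguments identify $\hat\phi^{-1}(F)$ with $\phi^{-1}\bigl(\overline{F}^{\cM_2}\bigr)$ — your formulation $F=\overline{F}^{\cM_2}\cap X$ is just a repackaging of the paper's observation that $\overline{F}^{\cM_2}\setminus F\subseteq E_2\setminus X$. (Both versions quietly use the standard fact that $\rho_2(\overline{F}^{\cM_2})=\rho_2(F)$, so no new gap is introduced.)
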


\begin{proof}
The statement is clear for weak and rank-preserving maps.
Let now~$\phi$ be strong and $F\in\cF(\cM_2|_X)$.
Then $\rho_2(F+\subspace{v})>\rho_2(F)$ for all $v\in X\setminus F$.
Hence the closure $\overline{F}^{\cM_2}$ satisfies $\overline{F}^{\cM_2}\setminus F\subseteq E_2\setminus X$.
Using that $\im\phi =X$, we obtain $\phi^{-1}(\overline{F}^{\cM_2})=\hat{\phi}^{-1}(F)$.
Since the former is a flat in~$\cM_1$, we conclude that~$\hat{\phi}$ is a strong map.
\end{proof}

Finally we record the simple observation that representability is not preserved under strong or weak bijective maps.
Take for instance the identity map on~$\F_2^4$.
It induces a bijective strong and weak map from the representable \qM{}
$\cU_{4}(\F_2^4)$ (see  \cref{E-Uniform}) to the non-representable \qM{} in \cref{E-NonRepr}.

\section{Non-Existence of Coproducts in Categories of $q$-Matroids}\label{S-NoCopr}

In this section we consider categories of \qM{}s with various types of morphisms.
We will show that --  with one exception -- none of these categories has a coproduct.

\begin{defi}\label{D-qMatCat}
We denote by \qMats, \qMatrp, \qMatw, \qMatls, \qMatlrp{}, and \qMatlw{} the categories with \qM{}s as objects and
where the morphisms are the strong, rank-preserving, weak,  linear strong, linear rank-preserving and linear weak maps, respectively.
\end{defi}

In this section we show that none of the first  5 categories has a coproduct, while in the next section we establish the existence of a
coproduct in \qMatlw. It is in fact the direct sum as introduced recently in \cite{CeJu21}.
The non-existence of a coproduct in \qMats{} and \qMatls{} stands in contrast to the case of classical matroids, where the direct sum
(see \cite[Sec.~4.2]{Ox11}) forms a
coproduct in the category with strong maps as morphisms, see \cite[Ex.~8.6, p.~244]{Wh86} (which goes back to \cite{CrRo70}).
We know from \cref{P-StroWeak} that isomorphisms in the first three categories coincide, and so do those in the second three categories.
This gives rise to the following notions of isomorphic \qM{}s.

\begin{defi}\label{D-IsomMatr}
We call \qM{}s $\cM_1$ and $\cM_2$ \emph{isomorphic} if they are isomorphic in the category \qMatrp, that is,
there exists a rank-preserving $\cL$-isomorphism $\phi:\cM_1\longrightarrow\cM_2$ (equivalently, $\phi$ and $\phi^{-1}$ are strong).
$\cM_1$ and~$\cM_2$ are \emph{linearly isomorphic}, denoted by $\cM_1\cong\cM_2$, if they are isomorphic in the category \qMatlrp.
\end{defi}

Due to \cref{T-FTPG} the above notion of isomorphism coincides with lattice-equivalence in \cite[Def.~5]{CeJu21}.
The same theorem tells us that any rank-preserving $\cL$-isomorphism is induced by a semi-linear map.

\begin{rem}\label{R-ConstrIsom}
Let $\phi:E_1\longrightarrow E_2$ be an $\cL$-isomorphism and $\cM_1=(E_1,\rho_1)$ be a \qM{}.
Define $\cM_2=(E_2,\rho_2)$ via $\rho_2(V):=\rho_1(\phi^{-1}(V))$ for all $V\in\cL(E_2)$.
Then $\cM_2$ is a \qM{} and isomorphic to~$\cM_1$.
The flats of $\cM_2$ are given by $\cF(\cM_2)=\{\phi(F)\mid F\in\cF(\cM_1)\}$.
\end{rem}

The following linear maps will be used throughout this paper. For $E=E_1\oplus E_2$ let
\begin{equation}\label{e-iota12}
   \iota_i:E_i\longrightarrow E,\ x \longrightarrow x
\end{equation}
be the natural embeddings. If $E_i=\F^{n_i}$ and $E=\F^{n_1+n_2}$ we define the maps as
\begin{equation}\label{e-iota12F}
   \iota_1:\F^{n_1}:\longrightarrow \F^{n_1+n_2},\ x \longrightarrow (x,0),\qquad
   \iota_2:\F^{n_2}:\longrightarrow \F^{n_1+n_2},\ y \longrightarrow (0,y).
\end{equation}

Next we present a simple construction that will be crucial later on.
It shows that representable $q$-matroids~$\cM_1$ and $\cM_2$ can be embedded in a $q$-matroid~$\cM$ in such a way that the ground spaces of~$\cM_i$ form a direct sum of the ground spaces of~$\cM$.
However, as we will illustrate by an example below, the resulting \qM{}~$\cM$ is not uniquely determined by the \qM{}s~$\cM_1$
and~$\cM_2$, but rather depends on the representing matrices (which has also been observed in \cite[Sec.~3.2]{CeJu21}).
It is exactly this non-uniqueness that allows us to prove the non-existence of coproducts.

\begin{prop}\label{P-DirProdMat}
Let $\F_{q^m}$ be a field extension of~$\F=\F_q$ and $G_i\in\F_{q^m}^{a_i\times n_i},\,i=1,2,$ be matrices of full row rank.
Set
\[
    G=\begin{pmatrix}G_1&0\\0&G_2\end{pmatrix}\in\F_{q^m}^{(a_1+a_2)\times(n_1+n_2)}.
\]
Denote by $\cM_i=(\F^{n_i},\rho_i),\,i=1,2,$ and $\cN=(\F^{n_1+n_2},\rho)$ the $q$-matroids represented by $G_1,G_2$, and~$G$, thus
$\rho_i(\rowsp(Y))=\rk(G_iY\T)$ for $Y\in\F^{y\times n_i}$ and $\rho(\rowsp(Y))=\rk(GY\T)$ for $Y\in\F^{y\times(n_1+n_2)}$.
Then $\iota_i:\cM_i\longrightarrow\cN,\,i=1,2$, is a linear, rank-preserving, and
strong map with image $\cN|_{\F^{n_i}}$.
Thus $\cN|_{\F^{n_i}}$ is linearly isomorphic to~$\cM_i$ for $i=1,2$.
\end{prop}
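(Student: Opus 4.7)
The plan is to verify the three properties of $\iota_i$ (linearity, rank-preserving, strong) in sequence, and then read off the image statement from \cref{P-ImageStr}. Linearity is immediate from the formulas in \eqref{e-iota12F}, so the real work is the other two properties.

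For rank-preservation, the key is the block-diagonal form of~$G$. Given a subspace $V=\rowsp(Y)$ of~$\F^{n_1}$ with $Y\in\F^{y\times n_1}$, the image $\iota_1(V)$ equals $\rowsp((Y\mid 0))$ in $\F^{n_1+n_2}$, and I would simply compute
\[
    \rho(\iota_1(V))=\rk\bigl(G(Y\mid 0)\T\bigr)=\rk\begin{pmatrix}G_1Y\T\\ 0\end{pmatrix}=\rk(G_1Y\T)=\rho_1(V).
\]
The argument for $\iota_2$ is identical with the roles of the blocks interchanged. This gives rank-preserving (and hence weak).

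For strongness, rather than computing preimages of flats directly, I would factor $\iota_i$ through its image and apply two previously established results. Let $X_i=\iota_i(\F^{n_i})$ and write $\iota_i=j_i\circ\phi_i$, where $\phi_i:\F^{n_i}\longrightarrow X_i$ is $\iota_i$ with codomain restricted to~$X_i$ and $j_i:X_i\hookrightarrow\F^{n_1+n_2}$ is the inclusion. The map $\phi_i$ is a linear bijection, hence an $\cL$-isomorphism, and by the rank computation above it is a rank-preserving isomorphism $\cM_i\longrightarrow\cN|_{X_i}$ (using that $\cN|_{X_i}$ inherits its rank function from~$\cN$). By \cref{P-StroWeak}, $\phi_i$ is strong. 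The inclusion $j_i$ is strong by \cref{P-DelContr}(a) applied to~$\cN$ and the subspace~$X_i$. Since compositions of strong maps are strong, $\iota_i$ is a strong map $\cM_i\longrightarrow\cN$.

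Finally, since $\im\iota_i=X_i$, \cref{P-ImageStr} identifies the image of~$\iota_i$ with $\cN|_{\F^{n_i}}$, and the linear rank-preserving $\cL$-isomorphism witnessing $\cM_i\cong\cN|_{\F^{n_i}}$ is exactly the factor $\phi_i$ from above. I do not anticipate a serious obstacle: essentially everything is driven by the block-diagonal shape of~$G$, and the delicate part (strongness) is handled for free by reducing to the bijective case via \cref{P-StroWeak} and to the restriction case via \cref{P-DelContr}.
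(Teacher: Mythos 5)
Your proposal is correct, and the rank-preservation step is identical to the paper's (the same block computation $\rk\bigl(G(Y\mid 0)\T\bigr)=\rk(G_1Y\T)$). Where you genuinely diverge is the strongness argument. The paper proves strongness by brute force: it computes the pre-image $\iota_1^{-1}(\rowsp(Y))$ explicitly as $\rowsp(A_1)$ via a row reduction $U_1Y=\left(\begin{smallmatrix}A_1&0\\A_3&A_4\end{smallmatrix}\right)$, and then verifies directly, by comparing the ranks of the relevant block matrices, that $\rowsp(A_1)$ is a flat of $\cM_1$ whenever $\rowsp(Y)$ is a flat of $\cN$. You instead factor $\iota_i=j_i\circ\phi_i$ through the image $X_i$, observe that $\phi_i:\cM_i\longrightarrow\cN|_{X_i}$ is a rank-preserving $\cL$-isomorphism (which follows from the rank computation already done), invoke \cref{P-StroWeak} to get that $\phi_i$ is strong, invoke \cref{P-DelContr}(a) to get that the inclusion $j_i:\cN|_{X_i}\longrightarrow\cN$ is strong, and conclude by closure of strong maps under composition. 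This is sound: both cited propositions precede this one in the paper, there is no circularity, and the composition of strong maps is strong since $(j_i\circ\phi_i)^{-1}(F)=\phi_i^{-1}(j_i^{-1}(F))$. Your route buys a cleaner, computation-free treatment of strongness and makes the final isomorphism statement $\cM_i\cong\cN|_{\F^{n_i}}$ fall out of the factorization for free; the paper's route is self-contained at this point and makes the explicit form of the pre-images available, which is occasionally useful elsewhere, but you lose nothing of substance by avoiding it.
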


\begin{proof}
Let $Y\in\F^{y\times n_1}$ be a matrix of rank~$y$. Then $\rowsp(Y)\leq\F^{n_1}$ and $\iota_1(\rowsp(Y))=\rowsp(Y\,|\,0)$. Moreover,
\[
   \rho(\rowsp(Y\mid 0))=\rk\bigg(\!\begin{pmatrix}G_1&0\\0&G_2\end{pmatrix}\begin{pmatrix}Y\T\\0\end{pmatrix}\!\bigg)
    =\rk(G_1Y\T)=\rho_1(\rowsp(Y)).
\]
Thus~$\iota_1$ is rank-preserving.
Similarly,~$\iota_2$ is rank-preserving.
In order to show that~$\iota_i:\cM_i\longrightarrow\cN$ is a  strong map, let us first consider the pre-images of a subspace $\rowsp(Y)$,
where $Y\in\F^{y\times(n_1+n_2)}$.
They can be computed as follows.
There exist $U_i\in\GL_y(\F)$ such that
\[
   U_1Y=\begin{pmatrix}A_1&0\\ A_3&A_4\end{pmatrix},\quad
   U_2Y=\begin{pmatrix}0&B_2\\ B_3&B_4\end{pmatrix},
\]
where the first block column consists of~$n_1$ columns and the second one of~$n_2$ columns, and where~$A_4$ and~$B_3$ have full row rank.
Then $\iota_1^{-1}(\rowsp(Y))=\rowsp(A_1)$ and $\iota_2^{-1}(\rowsp(Y))=\rowsp(B_2)$.
Suppose now that $\rowsp(Y)\in\cF(\cM)$.
By symmetry it suffices to show that $\rowsp(A_1)\in\cF(\cM_1)$.
To this end let $v_1\in\F^{n_1}$ such that $ \rho_1(\rowsp(A_1)+\subspace{v_1})=\rho_1(\rowsp(A_1))$.
Setting $v=\iota_1(v_1)=(v_1,0)$, we have
\[
   \rho_1(\rowsp(A_1)+\subspace{v_1})=\rk\big(G_1(A_1\!\T\ v_1\!\T)\big)\ \text{ and }\
   \rho(\rowsp(Y)+\subspace{v})=\rk\begin{pmatrix}G_1A_1\!\T&G_1v_1\!\T&G_1A_3\!\T\\0&0&G_2A_4\!\T\end{pmatrix}.
\]
We conclude that $\rho(\rowsp(Y)+\subspace{v})=\rho(\rowsp(Y))$ and hence $v\in\rowsp(Y)$ since the latter is a flat.
But then $v_1\in\rowsp(A_1)$, and this shows that $\rowsp(A_1)$ is a flat.
This shows that $\iota_i$ are linear, injective, strong, and rank-preserving maps.
Clearly, $\cN|_{\F^{n_i}}$ is the image of~$\iota_i$, and thus~$\iota_i$ induces an isomorphism between
$\cM_i$ and $\cN|_{\F^{n_i}}$ in \qMatlrp{}.
\end{proof}

The next proposition shows for a special case that the \qM{}~$\cN$ of the last proposition depends on the representing matrices.
This stands in contrast to the classical case, where the block diagonal matrix of every choice of representing matrices is a representing matrix of the direct sum; see \cite[p.~126, Ex.~7]{Ox11}.

\begin{prop}\label{P-BlockDiag}
Let $\F_{q^m}$ be a field extension of $\F=\F_q$ with primitive element~$\omega$ and
let $\Omega=\{1,\ldots,q^m-2\}\setminus\{k(q^m-1)/(q-1)\mid k\in\N\}$ (hence $\omega^i\not\in\F$ for $i\in\Omega$).
For $i\in\Omega$ define
\[
   G^{(i)}=\begin{pmatrix}1&\omega&0&0\\ 0&0&1&\omega^i\end{pmatrix},
\]
and let $\cN^{(i)}=(\F^4,\rho^{(i)})$ be the \qM{} represented by~$G^{(i)}$.
Define the subspaces $T_1=\subspace{1000,\,0100}$ and $T_2=\subspace{0010,\,0001}$.
Then for all $i\in\Omega$
\begin{alphalist}
\item $\rho^{(i)}(T_1)=\rho^{(i)}(T_2)=1$ and $\rho^{(i)}(\F^4)=2$.
\item $\rho^{(i)}(V)=1$ for all $1$-spaces~$V$ and $\rho^{(i)}(V)=2$ for all $3$-spaces~$V$.
\item Let $\cL_2=\{V\in\cL(\F^4)\mid \dim V=2,\,T_1\neq V\neq T_2\}$. Then
         \[
             \rho^{(i)}(V)=2\text{ for all  }V\in\cL_2
              \Longleftrightarrow 1,\omega,\omega^i,\omega^{i+1}\text{ are linearly independent over } \F.
         \]
\item The flats of~$\cN^{(i)}$ are given by $\cF(\cN^{(i)})=\{0,\F^4\}\cup\cF_1^{(i)}\cup\cF_2^{(i)}$, where
\begin{equation}\label{e-FPrime}
\left.\begin{split}
   \cF_2^{(i)}&=\{V\in\cL(\F^4)\mid \dim V=2,\,\rho^{(i)}(V)=1\},\\
   \cF_1^{(i)}&=\{V\in\cL(\F^4)\mid \dim V=1, V\not\leq W\text{ for all }W\in \cF_2^{(i)}\}.
\end{split}\qquad\right\}
\end{equation}
Thus for $m>3$ there exist at least two non-isomorphic \qM{}s of the form $\cN^{(i)}$.
\end{alphalist}
\end{prop}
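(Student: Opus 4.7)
The plan is to establish (a)-(c) by direct computation of $\rho^{(i)}(\rowsp(Y)) = \rk(G^{(i)} Y\T)$, derive the flat description from those rank data, and then exhibit two values of $i \in \Omega$ whose flat lattices have different numbers of $2$-dimensional flats.

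Parts (a) and (b) reduce to short rank calculations. For (a), plugging in the natural generators of $T_1$, $T_2$, and $\F^4$ gives the claimed ranks at once. For (b), any nonzero $y = (y_1, y_2, y_3, y_4)$ yields $G^{(i)} y\T = (y_1 + \omega y_2,\; y_3 + \omega^i y_4)\T$, which is nonzero since $\omega \notin \F$ and $\omega^i \notin \F$ (the latter because $i \in \Omega$); so every $1$-space has rank $1$. For a $3$-space $V$, dimension counting forces $\dim(V \cap T_j) \geq 1$ for $j = 1, 2$, and taking nonzero $v \in V \cap T_1$, $w \in V \cap T_2$ one sees that $G^{(i)} v\T$ and $G^{(i)} w\T$ have disjoint coordinate supports, hence are $\F_{q^m}$-linearly independent, giving $\rho^{(i)}(V) \geq 2$; monotonicity from $\rho^{(i)}(\F^4) = 2$ closes the gap.

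For (c), I split on $\dim(V \cap T_j)$ for $j = 1,2$. If either intersection is $1$-dimensional, the disjoint-support trick from (b) again gives $\rho^{(i)}(V) = 2$ for free. In the remaining case $V \cap T_1 = V \cap T_2 = 0$, I parametrize $V = \rowsp\left(\begin{smallmatrix} 1 & 0 & c_1 & d_1 \\ 0 & 1 & c_2 & d_2 \end{smallmatrix}\right)$ and compute
\[
\det\bigl(G^{(i)} Y\T\bigr) = c_2 - \omega c_1 + \omega^i d_2 - \omega^{i+1} d_1,
\]
an $\F$-linear combination of $\{1, \omega, \omega^i, \omega^{i+1}\}$. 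The direction $\Leftarrow$ is then immediate: linear independence forces this expression to vanish only in the degenerate case $c_1 = c_2 = d_1 = d_2 = 0$, which corresponds to $V = T_1 \notin \cL_2$. For $\Rightarrow$, given a nontrivial dependence $a + b\omega + c\omega^i + d\omega^{i+1} = 0$ I set $(c_1, c_2, d_1, d_2) = (-b, a, -d, c)$; the resulting $V$ is clearly in $\cL_2$, and it lies in the case $V \cap T_1 = V \cap T_2 = 0$ because $\omega^i \notin \F$ forces $(a,b)$ and $(c,d)$ to be $\F$-linearly independent (otherwise $\omega^i = -(a+b\omega)/(c+d\omega) \in \F$), i.e., $ad - bc \neq 0$.

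With (a)-(c) in hand the flat description drops out: $0$ and $\F^4$ are trivially flats, no proper $3$-space is a flat (since adding any vector produces $\F^4$ of equal rank $2$), the proper $2$-dimensional flats are exactly the rank-$1$ $2$-spaces, and the $1$-dimensional flats are the $1$-spaces contained in no rank-$1$ $2$-space. Finally, for $m > 3$ I would take $i_1 = 2$ and $i_2 = 1$: since $\omega$ generates $\F_{q^m}/\F$, it has degree $m \geq 4$ over $\F$, so $1, \omega, \omega^2, \omega^3$ are $\F$-linearly independent and (c) gives $\cF_2^{(2)} = \{T_1, T_2\}$; on the other hand $(a,b,c,d) = (0,1,-1,0)$ is a nontrivial $\F$-dependence of $\{1, \omega, \omega, \omega^2\}$ with $ad - bc = 1 \neq 0$, so (c) produces an element of $\cL_2$ of rank $1$ and $|\cF_2^{(1)}| \geq 3$. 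Since any rank-preserving $\cL$-isomorphism in \qMatrp{} induces a bijection on flats preserving both rank and dimension, $\cN^{(1)} \not\cong \cN^{(2)}$. The main technical obstacle is the admissibility check in the $\Rightarrow$ direction of (c), where the hypothesis $i \in \Omega$ (hence $\omega^i \notin \F$) is essential to ensure $ad - bc \neq 0$.
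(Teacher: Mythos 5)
Your proposal is correct and follows essentially the same route as the paper: both reduce part (c) to the vanishing of the determinant $c_2-\omega c_1+\omega^i d_2-\omega^{i+1}d_1$, hence to the $\F$-linear (in)dependence of $1,\omega,\omega^i,\omega^{i+1}$, and both distinguish $\cN^{(1)}$ from $\cN^{(2)}$ for $m>3$ via the rank-one $2$-spaces. The only cosmetic differences are in part (b)/(c), where you argue directly with image vectors instead of submodularity and a general two-generator determinant; just note that when, say, $V\cap T_1\neq 0=V\cap T_2$ the two image vectors need not have literally disjoint supports -- rather one is supported only in the first coordinate while the other has nonzero second coordinate, which still yields rank $2$.
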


\begin{proof}
(a) Clearly $\rho^{(i)}(T_1)=\rho^{(i)}(T_2)=1$ and $\rho^{(i)}(\F^4)=\rk(G)=2$.
\\
(b) By assumption on~$i$ the elements $1,\omega^i$ are linearly independent over~$\F$.
Thus, $G^{(i)}x\neq0$ for any nonzero vector $x\in\F^4$ and hence $\rho^{(i)}(V)=1$ for all 1-spaces~$V$.
Suppose there exists a 3-space $V$ such that $\rho^{(i)}(V)=1$.
Clearly,~$V$ does not contain both $T_1$ and~$T_2$. Let $T_1\not\leq V$.
Then $\dim(V\cap T_1)=1$, and submodularity of~$\rho^{(i)}$ implies $2=\rho^{(i)}(\F^4)=\rho^{(i)}(V+T_1)\leq 1+1-\rho^{(i)}(V\cap T_1)=1$,
which is a contradiction.
\\
(c) Consider now an arbitrary 2-space $V=\subspace{(a_0,a_1,a_2,a_3),(b_0,b_1,b_2,b_3)}$.
Since $\rho^{(i)}(V)$ is the rank of the matrix
\begin{equation}\label{e-MatProd}
    \begin{pmatrix}1&\omega&0&0\\ 0&0&1&\omega^i\end{pmatrix}
    \begin{pmatrix}a_0&b_0\\a_1&b_1\\a_2&b_2\\a_3&b_3\end{pmatrix}
    =\begin{pmatrix}a_0+a_1\omega&b_0+b_1\omega\\ a_2+a_3\omega^i&b_2+b_3\omega^i\end{pmatrix},
\end{equation}
we conclude that $\rho^{(i)}(V)=1$ if and only if its determinant is zero, thus
\begin{equation}\label{e-det}
  \rho^{(i)}(V)=1\Longleftrightarrow(a_0b_2-a_2b_0)+(a_1b_2-a_2b_1)\omega+(a_0b_3-a_3b_0)\omega^i+(a_1b_3-a_3b_1)\omega^{i+1}=0.
\end{equation}
Now we can prove the stated equivalence.
``$\Leftarrow$'' Suppose $1,\omega,\omega^i,\omega^{i+1}$ are linearly independent.
Then $\rho^{(i)}(V)=1$ if and only if all coefficients in \eqref{e-det} are zero.
We consider the following cases.
(i) If $a_0\neq0=b_0$, then $b_2=b_3=0$.
Thus $b_1\neq0$ since $\dim V=2$, and subsequently $a_2=a_3=0$. But then $V=T_1$.
(ii) Suppose $a_0=b_0=0$. If $a_1=0=b_1$, then $V=T_2$. Thus assume without loss of generality that $a_1\neq0$. But then
$b_2=(b_1/a_1)a_2$ and $b_3=(b_1/a_1)a_3$ and thus $\dim V=1$, which is a contradiction.
(iii) If $a_0\neq0\neq b_0$, then without loss of generality $a_0=b_0$ and as in (ii) we obtain $\dim V=1$ or $V=T_1$.
\\
``$\Rightarrow$'' Suppose $1,\omega,\omega^i,\omega^{i+1}$ are linearly dependent, say
$f_0+f_1\omega+f_2\omega^i+f_3\omega^{i+1}=0$ with $f_0,\ldots,f_3\in\F$, not all zero.
Using \eqref{e-MatProd} one obtains $\rho(V)=1$ for $V=\subspace{(1,0,-f_1,-f_3),\,(0,1,f_0,f_2)}\in\cL_2$.
\\
(d) The statement about the flats is clear from the rank values, and the statement about non-isomorphic $q$-matroids~$\cN^{(i)}$
follows from~(c) because  $1,\omega,\omega^i,\omega^{i+1}$ are linearly dependent for $i=1$ and linearly
independent for $i=2$ if $m>3$.
\end{proof}

For later use we record the following fact.

\begin{lemma}\label{L-Fprime}
Let $m\geq4$ and the data be as in \cref{P-BlockDiag}.
Define $\cF'=\cup_{i\in\Omega}\cF(\cN^{(i)})$. Then
\[
  \cF'=\{0,\F^4,T_1,T_2\}\cup\{V\in\cL(\F^4)\mid \dim V\leq 2, V\cap T_1=0=V\cap T_2\}.
\]
\end{lemma}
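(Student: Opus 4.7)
The plan is to establish both inclusions between $\cF'$ and the claimed right-hand side, which I denote~$R$. The key input is the explicit description from \cref{P-BlockDiag} that $\cF(\cN^{(i)})=\{0,\F^4\}\cup\cF_1^{(i)}\cup\cF_2^{(i)}$, together with part~(b) of that proposition, which guarantees that every $1$-space has rank~$1$ and every $3$-space has rank~$2$ in every $\cN^{(i)}$.

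For the inclusion $\cF'\subseteq R$, I take $F\in\cF(\cN^{(i)})$ and argue by cases. The cases $F\in\{0,\F^4,T_1,T_2\}$ are immediate. If $F\in\cF_1^{(i)}$ is a $1$-space, then since $T_1,T_2\in\cF_2^{(i)}$, the defining property of~$\cF_1^{(i)}$ forces $F\not\leq T_1$ and $F\not\leq T_2$, hence $F\cap T_1=F\cap T_2=0$. If $F\in\cF_2^{(i)}\setminus\{T_1,T_2\}$, I suppose toward a contradiction that $F\cap T_j\neq 0$ for some $j\in\{1,2\}$. Since $\dim F=\dim T_j=2$ and $F\neq T_j$, this yields $\dim(F\cap T_j)=1$ and $\dim(F+T_j)=3$. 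Using part~(b), $\rho^{(i)}(F+T_j)=2$ and $\rho^{(i)}(F\cap T_j)=1$, so submodularity gives $3\leq \rho^{(i)}(F)+\rho^{(i)}(T_j)=2$, a contradiction. Hence $F\in R$.

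For $R\subseteq\cF'$, the members $0,\F^4,T_1,T_2$ lie in every $\cF(\cN^{(i)})$. For a $1$-space $V$ with $V\cap T_1=V\cap T_2=0$, I invoke part~(c) of \cref{P-BlockDiag} with any $i\in\Omega$ for which $1,\omega,\omega^i,\omega^{i+1}$ are linearly independent over~$\F$; the choice $i=2$ works whenever $m\geq 4$ (then $2\in\Omega$ and $1,\omega,\omega^2,\omega^3$ are linearly independent since $\omega$ generates~$\F_{q^m}$). For this~$i$ the only rank-$1$ $2$-spaces are $T_1$ and $T_2$, so $V$ is contained in neither, and therefore $V\in\cF_1^{(i)}\subseteq \cF'$.

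The main obstacle is the remaining case: producing, for a $2$-space~$V$ with $V\cap T_1=V\cap T_2=0$, an index $i\in\Omega$ satisfying $\rho^{(i)}(V)=1$. Transversality to~$T_2$ lets me choose a basis so that $V=\rowsp\bigl(\begin{smallmatrix}1&0&c_0&c_1\\ 0&1&d_0&d_1\end{smallmatrix}\bigr)$, and transversality to~$T_1$ then forces the right $2\times 2$ block $B=\bigl(\begin{smallmatrix}c_0&c_1\\ d_0&d_1\end{smallmatrix}\bigr)$ to be invertible. Substituting into \eqref{e-det} collapses $\rho^{(i)}(V)=1$ to the single equation $\omega^i=\alpha$ with $\alpha=(d_0-c_0\omega)/(c_1\omega-d_1)$. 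I then need to verify that $\alpha$ is well-defined and lies in $\F_{q^m}\setminus\F$. The denominator $c_1\omega-d_1$ is nonzero since $\omega\notin\F$ (and if $c_1=d_1=0$ then $B$ would be singular); the numerator is nonzero for the same reason; and if $\alpha=\lambda\in\F$, then matching coefficients in $d_0-c_0\omega=\lambda(c_1\omega-d_1)$ yields $c_0=-\lambda c_1$ and $d_0=-\lambda d_1$, again making~$B$ singular. Hence $\alpha=\omega^i$ for a unique $i\in\{1,\dots,q^m-2\}$ with $\omega^i\notin\F$, i.e.\ $i\in\Omega$, placing $V$ in $\cF_2^{(i)}\subseteq\cF'$.
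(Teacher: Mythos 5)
Your proof is correct and follows essentially the same route as the paper's: the same case split, the same submodularity contradiction for a rank-one $2$-space meeting $T_1$ or $T_2$ nontrivially, the choice $i=2$ for the $1$-spaces, and the same reduced-row-echelon computation solving $\omega^i=\alpha$ with the determinant condition guaranteeing $\alpha\in\F_{q^m}^*\setminus\F$ and hence $i\in\Omega$. No gaps.
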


\begin{proof}
Recall the sets $\cF_1^{(i)},\cF_2^{(i)}$ from \eqref{e-FPrime}.
Note that $T_1,T_2\in\cF_2^{(i)}$ for all $i\in\Omega$.
Denote the set on the right hand side of the stated identity by $\cF''$.
``$\subseteq$'' Let $V\in\cF'$.
If $\dim V=1$, then $V\in\cF_1^{(i)}$ for some~$i$ and thus $V\not\leq T_\ell$ for $\ell=1,2$. Hence $V\in\cF''$.
Let now $\dim V=2$ and thus $\rho^{(i)}(V)=1$ for some~$i$.
The statement is clear for $V\in\{T_1,T_2\}$, and thus let $V\not\in\{T_1,T_2\}$.
Suppose $V\cap T_1=\subspace{v}$ for some $v\neq0$.
Then $\dim(V+T_1)=3$ and thus $2=\rho^{(i)}(V+T_1)\leq\rho^{(i)}(V)+\rho^{(i)}(T_1)-\rho^{(i)}(\subspace{v})=1$, a contradiction.
Thus, $V\in\cF''$.
\\
`$\supseteq$''
It is clear that the spaces $0,\F^4,T_1,T_2$ are in~$\cF'$. We consider the 1-spaces and 2-spaces separately.
\\
i) Let $V\in\cL(\F^4)$ with $\dim V=2$ and $V\cap T_1=V\cap T_2=0$.
Choosing the matrix $M\in\F^{2\times 4}$ in reduced row echelon form such that $V=\rowsp(M)$ we conclude that
\begin{equation}\label{e-MMat}
    M=\begin{pmatrix}1&0&a&b\\0&1&c&d\end{pmatrix},\ \text{ where }\det\begin{pmatrix}a&b\\c&d\end{pmatrix}\neq0.
\end{equation}
Now \eqref{e-det} reads as
\[
  \rho^{(i)}(V)=1\Longleftrightarrow c-a\omega+d\omega^i-b\omega^{i+1}=0
  \Longleftrightarrow \omega^i=\frac{a\omega-c}{d-b\omega}.
\]
Note that the denominator is indeed nonzero thanks to the determinant condition in \eqref{e-MMat}.
Since the fraction is in $\F_{q^m}$ and $\omega$ is a primitive element, $(a\omega-c)/(d-b\omega)=\omega^i$ for some
$i\in\{0,\ldots,q^m-2\}$.
Furthermore, the determinant condition in  \eqref{e-MMat} implies that $(a\omega-c)/(d-b\omega)\not\in\F$ and thus $i\in\Omega$.
All of this shows that $\rho^{(i)}(V)=1$ for some $i\in\Omega$ and thus $V\in\cF_2^{(i)}\subseteq\cF'$.
\\
ii) Let $\dim V=1$ and $V\not\leq T_\ell$ for $\ell=1,2$.
Let $W$ be a 2-space containing~$V$.
Then~$W$ is distinct from~$T_1$ and~$T_2$, and \cref{P-BlockDiag}(c) for $i=2$ implies $\rho^{(2)}(W)=2$.
Hence $V\in\cF_1^{(2)}\subseteq\cF'$, as desired.
\end{proof}

We now turn to coproducts in the above defined categories of \qM{}s.
Let us recall the definition.

\begin{defi}\label{D-CoPr}
Let $\CC$ be a category and $M_1,\,M_2$ be objects in~$\CC$.
A coproduct of~$M_1$ and~$M_2$ in~$\CC$ is a triple $(M,\xi_1,\xi_2)$, where~$M$ is an object in~$\CC$ and
$\xi_i:M_i\longrightarrow M$ are morphisms, such that
for all objects~$N$ and all morphisms $\tau_i:M_i\longrightarrow N$ there exists a unique morphism
$\epsilon: M\longrightarrow N$ such that $\epsilon\circ\xi_i=\tau_i$ for $i=1,2$.
\end{defi}

It is well-known (and straightforward to verify) that if $(M,\xi_1,\xi_2)$ is a coproduct of~$M_1$ and $M_2$ and
$\phi:M\longrightarrow \hat{M}$
is an isomorphism (i.e., a bijective morphism whose inverse is also a morphism), then $(\hat{M},\phi\circ\xi_1,\phi\circ\xi_2)$ is also a coproduct  of~$M_1$ and $M_2$.
Furthermore, every coproduct of~$M_1$ and $M_2$ is of this form.

The rest of this section is devoted to the non-existence of coproducts in the first 5 categories of \cref{D-qMatCat}.
Our first result narrows down the ground space of a putative coproduct in an expected way.
Furthermore, it shows that the accompanying morphisms are injective and can be chosen as linear maps.
We introduce the following notation.
Let $\cT=\{\textsf{s, rp, w, l-s, l-rp, l-w}\}$ be the set of types of morphisms.
For any $\Delta\in\cT$  we denote by \qMatd{} the corresponding category, and the morphisms in this category are called type-$\Delta$ maps.
Note that the maps in \cref{P-DirProdMat} and \cref{P-DelContr}(a) are type-$\Delta$ for each~$\Delta\in\cT$.

\begin{theo}\label{T-NoCopr2}
Let $\Delta\in\cT$.
Let $\cM_i,\,i=1,2,$ be representable \qM{}s with ground spaces~$E_i$.
Suppose $\cM_1$ and $\cM_2$ have a coproduct $(\cM,\xi_1,\xi_2)$ in \qMatd.
Then they have a coproduct of the form $(\tilde{\cM},\iota_1,\iota_2)$ where $\tilde{\cM}$ has ground space $E_1\oplus E_2$ and
$\iota_i:E_i\longrightarrow E_1\oplus E_2$ are the natural embeddings as in~\eqref{e-iota12} (hence linear).
\end{theo}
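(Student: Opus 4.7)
The plan is to leverage the universal property of the hypothesized coproduct against the explicit representable $q$-matroid $\cN_G$ built from block-diagonal representations as in \cref{P-DirProdMat}. This will rigidify the ground space $E$ of $\cM$ enough to identify it (as a vector space) with $E_1\oplus E_2$, after which the coproduct structure can be transported along an isomorphism to land in the desired form.

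First I fix representing matrices $G_1,G_2$ of $\cM_1,\cM_2$, form the block-diagonal $G$, and obtain the $q$-matroid $\cN_G$ on $E_1\oplus E_2$. By \cref{P-DirProdMat} the natural embeddings $\iota_i:\cM_i\to\cN_G$ are linear, rank-preserving, and strong, hence of type $\Delta$ for every $\Delta\in\cT$. Universality of $(\cM,\xi_1,\xi_2)$ then produces a unique type-$\Delta$ map $\epsilon:\cM\to\cN_G$ with $\epsilon\circ\xi_i=\iota_i$ for $i=1,2$. Injectivity of $\iota_i$ forces $\xi_i$ to be injective; setting $V_i:=\xi_i(E_i)$, if $v\in V_1\cap V_2$ we write $v=\xi_1(x_1)=\xi_2(x_2)$ and obtain $\iota_1(x_1)=\epsilon(v)=\iota_2(x_2)$ in $E_1\oplus E_2$, forcing $x_1=x_2=0$. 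Hence $V_1\cap V_2=0$.

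To show $V_1+V_2=E$, I apply a second universal-property argument. The inclusion $j:\cM|_{V_1+V_2}\to\cM$ is linear, strong, and rank-preserving by \cref{P-DelContr}(a), hence type-$\Delta$, and by \cref{P-ImageStr} each $\xi_i$ factors as $\xi_i=j\circ\hat{\xi}_i$ through a type-$\Delta$ map $\hat{\xi}_i:\cM_i\to\cM|_{V_1+V_2}$. Universality then produces a unique $\delta:\cM\to\cM|_{V_1+V_2}$ with $\delta\circ\xi_i=\hat{\xi}_i$, so that $(j\circ\delta)\circ\xi_i=\xi_i=\id_\cM\circ\xi_i$; the uniqueness clause of \cref{D-CoPr} forces $j\circ\delta=\id_\cM$, making $j$ surjective. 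Thus $E=V_1\oplus V_2$ and $\dim E=n_1+n_2$.

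With the ground spaces now of equal dimension and the $\xi_i$ giving bijective type-$\Delta$ maps $E_i\to V_i$, I construct an $\cL$-isomorphism $L:E\to E_1\oplus E_2$ satisfying $L\circ\xi_i=\iota_i$. In the linear categories this is clean: $\xi_i^{-1}:V_i\to E_i$ is linear, and $L$ is the linear extension of $\iota_i\circ\xi_i^{-1}$ from the decomposition $V_1\oplus V_2$. Defining $\tilde{\cM}$ on $E_1\oplus E_2$ by $\tilde\rho(W):=\rho(L^{-1}(W))$ yields a $q$-matroid by \cref{R-ConstrIsom}, and $L:\cM\to\tilde{\cM}$ is a type-$\Delta$ isomorphism. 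The universal property of $(\tilde{\cM},\iota_1,\iota_2)$ is then verified by transporting along $L$: for any type-$\Delta$ maps $\sigma_i:\cM_i\to\cN$, the original coproduct supplies a unique $\epsilon_0:\cM\to\cN$ with $\epsilon_0\circ\xi_i=\sigma_i$, and $\tilde\epsilon:=\epsilon_0\circ L^{-1}$ then satisfies $\tilde\epsilon\circ\iota_i=\epsilon_0\circ L^{-1}\circ L\circ\xi_i=\sigma_i$, uniquely by a pre-composition argument.

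The main obstacle I foresee lies in the non-linear categories, where $\xi_i$ need not be linear and the naive $L$ obtained from $(\iota_1\circ\xi_1^{-1})\oplus(\iota_2\circ\xi_2^{-1})$ is not obviously an $\cL$-map: for $v=v_1+v_2$ with $v_i\in V_i$ the scalings $\xi_i^{-1}(\lambda v_i)=f_i(\lambda)\xi_i^{-1}(v_i)$ guaranteed by the $\cL$-isomorphism property must agree across $i=1,2$ for $L$ to send 1-spaces to 1-spaces. Repairing this compatibility, most likely by first passing to $\cL$-equivalent representatives of $\xi_i$ in the spirit of \cref{P-S1dim}(b) and using the semi-linear description of $\cL$-isomorphisms afforded by \cref{T-FTPG}, is the technically delicate part of the argument.
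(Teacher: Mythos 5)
Your first two reductions are sound, and your argument for $V_1+V_2=E$ is in fact slicker than the paper's: you apply the uniqueness clause of \cref{D-CoPr} with $N=\cM$ and $\tau_i=\xi_i$ to force $j\circ\delta=\id_{\cM}$, whereas the paper takes the longer route of proving that $(\cM|_X,\hat{\xi}_1,\hat{\xi}_2)$ is itself a coproduct (a fact it then reuses). Claim~1 and the final transport of the coproduct structure along an isomorphism match the paper and are correct, so for $\Delta\in\{\textsf{l-s},\textsf{l-rp},\textsf{l-w}\}$ your proof is complete.

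The genuine gap is exactly the one you flag in your last paragraph, and it is not merely ``technically delicate'': for $\Delta\in\{\textsf{s},\textsf{rp},\textsf{w}\}$ the map $L$ defined piecewise by $L|_{V_i}=\iota_i\circ\xi_i^{-1}$ need not be an $\cL$-map, and the repair you sketch does not close the hole. By \cref{T-FTPG} each $\xi_i^{-1}$ is $\cL$-equivalent to a semi-linear map with some automorphism $\sigma_i\in\Aut(\F)$, and these automorphisms are determined by the $\cL$-classes of the given $\xi_i$, which you are not free to change. If $\sigma_1\neq\sigma_2$, then for $v_i\in V_i\setminus 0$ the vector $L(\lambda(v_1+v_2))=\sigma_1(\lambda)L(v_1)+\sigma_2(\lambda)L(v_2)$ leaves the line $\subspace{L(v_1+v_2)}$, so $L$ fails to map $1$-spaces to $1$-spaces; nothing in your argument rules this out. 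The paper's resolution is to never build the identification by hand: it invokes the universal property a second time, with target the block-diagonal representable \qM{} $\cN$ on $E_1\oplus E_2$ from \cref{P-DirProdMat} and structure maps $\iota_1,\iota_2$ (this is where representability of $\cM_1,\cM_2$ is used again). The resulting morphism $\beta:\cM\longrightarrow\cN$ with $\beta\circ\xi_i=\iota_i$ is an $\cL$-map of the correct type \emph{by definition} of the category, its image contains a basis of $E_1\oplus E_2$, and \cref{T-SemiLinIso} then yields bijectivity. Transporting $\rho$ along $\beta$ as in \cref{R-ConstrIsom} produces $\tilde{\cM}$ and finishes the proof in all six categories; replacing your hand-built $L$ by this $\beta$ repairs your argument.
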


\begin{proof}
Suppose without loss of generality that $\cM_i$ is a \qM{} on the ground space $\F^{n_i}$.
Let $(\cM,\xi_1,\xi_2)$ be a coproduct of~$\cM_1,\cM_2$.
We may also assume that the ground space of~$\cM$ is $\F^n$ for some $n$.
Recall that $\xi_i$ may not be semi-linear maps, but the images $\xi_i(\F^{n_i})$ are subspaces of~$\F^n$.
We proceed in several steps.

\underline{Claim~1:} $\xi_i$ is injective for $i=1,2$ and $\xi_1(\F^{n_1})\cap\xi_2(\F^{n_2})=\{0\}$.
\\
Since each~$\cM_i$ is representable, we may apply \cref{P-DirProdMat} and obtain the existence of  a $q$-matroid~$\cN$ on $\F^{n_1+n_2}$ and linear maps
$\alpha_i:\cM_i\longrightarrow\cN$ where $\alpha_1(v_1)=(v_1,0)$ and $\alpha_2(v_2)=(0,v_2)$ for all $v_i\in\F^{n_i}$.
Thanks to \cref{P-DirProdMat} the maps~$\alpha_i$ are type-$\Delta$.
Hence the universal property of the coproduct implies the existence of a type-$\Delta$ map $\epsilon:\cM\longrightarrow\cN$ such that $\epsilon\circ\xi_i=\alpha_i,\,i=1,2$.
Now injectivity of~$\xi_i$ follows from injectivity of~$\alpha_i$.
Suppose $\xi_1(v_1)=\xi_2(v_2)$ for some $v_i\in\F^{n_i}$.
Then $\alpha_1(v_1)=\alpha_2(v_2)$, which means $(v_1,0)=(0,v_2)$.
Thus $v_1=0$ and $v_2=0$. This implies $\xi_1(v_1)=\xi_2(v_2)=0$, and the claim is proved.

\underline{Claim 2:}  $\F^n=\xi_1(\F^{n_1})\oplus\xi_2(\F^{n_2})$ and thus $n=n_1+n_2$.\\
Set $X_i=\xi_i(\F^{n_i})$, which is a subspace of~$\F^n$, and $X=X_1\oplus X_2$.
Consider the restriction $\cM|_X$.
The maps $\hat{\xi}_i:\cM_i\longrightarrow\cM|_X,\,v\longrightarrow\xi_i(v),$ are clearly type-$\Delta$ as well; see \cref{P-ImageStr}.
We want to show that $(\cM|_X,\hat{\xi}_1,\hat{\xi}_2)$ is a coproduct as well.
To do so, consider first  the diagram

\[
\begin{array}{l}
   \begin{xy}
   (0,40)*+{\cM_1}="a";
   (0,20)*+{\cM}="b";%
   (0,0)*+{\cM_2}="c";%
   (20,20)*+{\cM|_X}="d";%
   {\ar "a";"b"}?*!/_-2mm/{\xi_1};
    {\ar "a";"d"}?*!/_3mm/{\hat{\xi}_1};
    {\ar "c";"b"}?*!/_2mm/{\xi_2};
   {\ar "c";"d"}?*!/_-3mm/{\hat{\xi}_2};%
   {\ar "b";"d"};?*!/_2mm/{\hat{\epsilon}};
   \end{xy}
\end{array}
\]
Since~$\cM$ is a coproduct, there is a unique type-$\Delta$ map $\hat{\epsilon}$ satisfying $\hat{\epsilon}\circ\xi_i=\hat{\xi}_i$ for $i=1,2$.
Define $\tau:\cM|_X\longrightarrow \cM$ via  $x\longrightarrow x$.
From \cref{P-DelContr}(a) we know that~$\tau$ is type-$\Delta$.
Consider the map $\hat{\epsilon}\circ\tau:\cM|_X\longrightarrow\cM|_X$.
It satisfies $\hat{\epsilon}\circ\tau|_{X_i}=\id_{X_i}$ for $i=1,2$.
Hence \cref{T-SemiLinIso} implies that $\hat{\epsilon}\circ\tau:X\longrightarrow X$ is a bijective map
(and equivalent to a semi-linear isomorphism).
\\
Let now $\cN$ be any \qM{} and $\alpha_i:\cM_i\longrightarrow\cN$ be type-$\Delta$ maps.
Then there exists a type-$\Delta$ map $\epsilon:\cM\longrightarrow\cN$ resulting in the commutative diagram
\[
\begin{array}{l}
   \begin{xy}
   (-20,20)*+{\cM|_X}="e";%
   (0,40)*+{\cM_1}="a";
   (0,20)*+{\cM}="b";%
   (0,0)*+{\cM_2}="c";%
   (20,20)*+{\cN}="d";%
   {\ar "a";"b"}?*!/_-2mm/{\xi_1};
    {\ar "a";"d"}?*!/_2mm/{\alpha_1};
    {\ar "c";"b"}?*!/_2mm/{\xi_2};
   {\ar "c";"d"}?*!/_-2mm/{\alpha_2};%
   {\ar "b";"d"};?*!/_2mm/{\epsilon};
   {\ar "a";"e"}?*!/_-2mm/{\hat{\xi}_1};%
   {\ar "c";"e"}?*!/_+2mm/{\hat{\xi}_2};%
   {\ar "e";"b"};?*!/_2mm/{\tau};
   \end{xy}
\end{array}
\]
Hence the map $\gamma:=\epsilon\circ\tau:\cM|_X\longrightarrow\cN$ is type-$\Delta$ and satisfies $\gamma\circ\hat{\xi}_i=\alpha_i$.
It remains to show the uniqueness of $\gamma$.
Suppose there is also a type-$\Delta$ map $\delta:\cM|_X\longrightarrow\cN$ such that $\delta\circ\hat{\xi}_i=\alpha_i$ for $i=1,2$.
Set $\tilde{\gamma}=\gamma\circ\hat{\epsilon}\circ\tau$ and $\tilde{\delta}=\delta\circ\hat{\epsilon}\circ\tau$.
Now we have $\gamma\circ\hat{\epsilon}\circ\xi_i=\gamma\circ\hat{\xi}_i=\alpha_i$ and
$\delta\circ\hat{\epsilon}\circ\xi_i=\delta\circ\hat{\xi}_i=\alpha_i$.
Since~$\cM$ is a coproduct, we conclude that $\gamma\circ\hat{\epsilon}=\delta\circ\hat{\epsilon}$.
This implies $\tilde{\gamma}=\tilde{\delta}$ and since $\hat{\epsilon}\circ\tau$ is a bijection, this in turn yields $\delta=\gamma=\epsilon\circ\tau$.
All of this shows that $(\cM|_X,\hat{\xi}_1,\hat{\xi}_2)$ is a coproduct.
Hence~$\cM$ and $\cM|_X$ are isomorphic in \qMatd, and this means $X=\F^n$.

\underline{Claim 3:}
$\cM_1,\,\cM_2$ have a coproduct of the form $(\tilde{\cM},\iota_1,\iota_2)$, where~$\tilde{\cM}$ has ground space $\F^{n_1+n_2}$ and $\iota_i:\cM_i\longrightarrow\tilde{\cM}$ are as in~\eqref{e-iota12F}.
\\
We show first that there exists an $\cL$-isomorphism $\beta:X\longrightarrow\F^{n_1+n_2}$ such that $\beta\circ\hat{\xi}_i=\iota_i,\,i=1,2$.
In a second step this map will be turned into the desired type-$\Delta$ isomorphism between $\cM|_X$ and the new coproduct~$\tilde{\cM}$.
To show the existence of~$\beta$, use again the construction in \cref{P-DirProdMat}: consider any \qM{}~$\cN$ on $\F^{n_1+n_2}$ and the type-$\Delta$ maps
$\iota_i:\cM_i\longrightarrow\cN$ with $\iota_i$ as in \eqref{e-iota12F}.
Since $\cM|_X$ is a coproduct, there exists a type-$\Delta$ map $\beta:\cM|_X\longrightarrow\cN$ such that $\beta\circ\hat{\xi}_i=\iota_i,\,i=1,2$.
Hence $e_1,\ldots,e_n$ are in the image of~$\beta$ and thus \cref{T-SemiLinIso} implies that~$\beta$ is bijective.
All of this provides us with the desired $\cL$-isomorphism $\beta:X\longrightarrow\F^{n_1+n_2}$.
 Note that~$\beta$ is linear if $\Delta\in\{\textsf{l-s, l-rp, l-w}\}$.
Now we use \cref{R-ConstrIsom} to define a new \qM{} structure on $\F^{n_1+n_2}$.
Set $\tilde{\cM}=(\F^{n_1+n_2},\tilde{\rho})$
via $\tilde{\rho}(V):=\rho(\beta^{-1}(V))$ with~$\rho$ being the rank function of~$\cM|_X$.
This makes sense because the inverse of an $\cL$-isomorphism is again an $\cL$-map.
Thanks to \cref{R-ConstrIsom} the flats of~$\tilde{\cM}$ are given by
$\tilde{\cF}:=\{\beta(F)\mid F\in\cF(\cM|_X)\}$.
This turns~$\beta$ into an isomorphism in \qMatd{} from $\cM|_X$ to~$\tilde{\cM}$, and
the maps $\iota_i=\beta\circ\hat{\xi}_i:\cM_i\longrightarrow\tilde{\cM}$ are type-$\Delta$.
Thus $(\tilde{\cM},\beta\circ\hat{\xi}_1,\beta\circ\hat{\xi}_2)=(\tilde{\cM},\iota_1,\iota_2)$ is a coproduct, as desired.
\end{proof}

Now we are ready to show the non-existence of a coproduct in the following linear cases.

\begin{theo}\label{T-NoCoprq2}
Let $q\geq2$ and $\Delta\in\{\textsf{l-s, l-rp}\}$.
There exist representable \qM{}s  that do not have a coproduct in \qMatd.
\end{theo}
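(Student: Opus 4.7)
The plan is to take $\cM_1=\cM_2=\cU_1(\F^2)$, which is representable over $\F_{q^m}$ via $G=(1,\omega)$ with $\omega$ a primitive element of $\F_{q^m}$. Fix $m\geq 4$ and suppose for contradiction that $\cM_1,\cM_2$ admit a coproduct in \qMatd{} for $\Delta\in\{\textsf{l-s},\textsf{l-rp}\}$. By \cref{T-NoCopr2} we may take it to be $(\tilde{\cM},\iota_1,\iota_2)$ with ground space $\F^4$ and natural linear embeddings whose images are the subspaces $T_1,T_2$ of \cref{P-BlockDiag}. For each $i\in\Omega$, \cref{P-DirProdMat} applied to $G_1=(1,\omega)$ and $G_2=(1,\omega^i)$ provides the $q$-matroid $\cN^{(i)}$ together with linear type-$\Delta$ maps $\iota_j:\cM_j\longrightarrow\cN^{(i)}$. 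The universal property of the coproduct yields a unique type-$\Delta$ map $\epsilon_i:\tilde{\cM}\longrightarrow\cN^{(i)}$ satisfying $\epsilon_i\circ\iota_j=\iota_j$; since $\epsilon_i$ is linear and restricts to the identity on both $T_1$ and $T_2$, and $\F^4=T_1\oplus T_2$, we conclude $\epsilon_i=\id_{\F^4}$ for every $i\in\Omega$.

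In the case $\Delta=\textsf{l-rp}$, the identity being rank-preserving from $\tilde{\cM}$ to $\cN^{(i)}$ forces $\tilde{\rho}=\rho^{(i)}$ for every $i\in\Omega$, hence all rank functions $\rho^{(i)}$ coincide. But by \cref{P-BlockDiag}(c) the quadruple $1,\omega,\omega^i,\omega^{i+1}$ is $\F$-linearly dependent for $i=1$ (as it contains $\omega$ twice) and $\F$-linearly independent for $i=2$ (since the minimal polynomial of $\omega$ has degree $m\geq 4$), so $\rho^{(1)}\neq\rho^{(2)}$ on $\cL_2$; contradiction. This rules out a coproduct in \qMatlrp{}.

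In the case $\Delta=\textsf{l-s}$, the identity being strong from $\tilde{\cM}$ to every $\cN^{(i)}$ yields $\cF'\subseteq\cF(\tilde{\cM})$ with $\cF'$ as in \cref{L-Fprime}. Conversely, since each $\iota_j:\cU_1(\F^2)\longrightarrow\tilde{\cM}$ is linear and strong and the flats of $\cU_1(\F^2)$ are $\{0,\F^2\}$, every $F\in\cF(\tilde{\cM})$ satisfies $F\cap T_j\in\{0,T_j\}$ for $j=1,2$. A short dimension argument rules out 3-dimensional flats (as $\dim(F\cap T_j)\geq 1$ would force $F\supseteq T_1+T_2=\F^4$) and matches the remaining dimensions with the description in \cref{L-Fprime}, giving $\cF(\tilde{\cM})=\cF'$.

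The main obstacle is then to show that $\cF'$ itself violates axiom (F3). The plan is to fix a 1-flat $V\in\cF'$, say $V=\langle e_1+e_3\rangle$, and count the 2-spaces $U$ with $V\leq U\leq\F^4$: among the $q^2+q+1$ such 2-spaces, exactly $q+1$ meet $T_1$ non-trivially, exactly $q+1$ meet $T_2$ non-trivially, and exactly one (namely $\langle e_1,e_3\rangle$) meets both, so by inclusion-exclusion $V$ lies in exactly $q^2-q\geq 2$ 2-flats of $\cF'$. Since $\cF'$ contains no 3-flat, the only possible covers of $V$ in $\cF'$ are these 2-flats. A second counting argument shows that their union contains at most $q+(q^2-q)^2<q^4$ elements for every $q\geq 2$, so some $v\in\F^4\setminus V$ lies in no 2-flat containing $V$. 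For such $v$ no cover of $V$ in $\cF'$ contains $v$, contradicting (F3) and completing the proof.
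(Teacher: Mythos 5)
Your proposal is correct, and its skeleton coincides with the paper's: the same pair $\cM_1=\cM_2=\cU_1(\F^2)$, the same family $\cN^{(i)}$ from \cref{P-BlockDiag} with $m\geq 4$, the reduction via \cref{T-NoCopr2}, the forced conclusion $\epsilon_i=\id_{\F^4}$, and an identical treatment of the $\textsf{l-rp}$ case. The only genuine divergence is the endgame for $\Delta=\textsf{l-s}$. The paper uses only the inclusion $\cF'\subseteq\cF(\tilde{\cM})$: it takes $V=\subspace{(1,1,1,0)}$ and $v=(1,0,0,0)$, shows via strongness of $\iota_1,\iota_2$ that the unique cover of $V$ in $\cF(\tilde{\cM})$ containing $v$ would have to be all of $\F^4$, and rules that out by the gradedness (Jordan--Dedekind property) of the flat lattice, since $\cF'$ already supplies $2$-flats strictly between $V$ and $\F^4$. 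You instead upgrade the inclusion to the equality $\cF(\tilde{\cM})=\cF'$ --- using that strongness of $\iota_j$ forces $F\cap T_j\in\{0,T_j\}$ for every flat $F$, which excludes $3$-dimensional flats and matches the rest against \cref{L-Fprime} --- and then exhibit a direct violation of (F3) inside $\cF'$ by counting. Your numerical claims all check out: there are $q^2+q+1$ two-spaces through $V=\subspace{e_1+e_3}$, of which $q+1$ meet $T_1$ nontrivially, $q+1$ meet $T_2$ nontrivially, and exactly one meets both, leaving $q^2-q\geq 2$ covers of $V$ in $\cF'$; their union has at most $q+(q^2-q)^2<q^4$ elements for $q\geq 2$, so some $v\notin V$ lies in no cover of $V$. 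Both finishes are sound; yours buys an explicit description of the putative flat lattice at the cost of an extra counting step, while the paper's avoids determining $\cF(\tilde{\cM})$ completely by invoking semimodularity.
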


\begin{proof}
Let $\F=\F_q$ and $\cM_1=\cU_{1}(\F^2)=\cM_2$, that is~$\cM_1$ and $\cM_2$ are the uniform $q$-matroids on $\F^2$ with rank $1$.
Their collections of flats are
\begin{equation}\label{e-F1F2}
   \cF(\cM_1)=\cF(\cM_2)=\{\{0\},\F^2\}.
\end{equation}
Assume by contradiction that~$\cM_1$ and~$\cM_2$ have a coproduct.
From \cref{T-NoCopr2} we know that it is without loss of generality of the form
$(\cM,\iota_1,\iota_2)$, where~$\cM$ has ground space~$\F^4$ and $\iota_i$ are as in \eqref{e-iota12F}.
We will show that such~$\cM$ does not exist.
To do so, we construct various $q$-matroids~$\cN^{(j)}$ along with type-$\Delta$ maps $\tau_i:\cM_i\longrightarrow\cN^{(j)},\,i=1,2$.
Consider the construction in \cref{P-BlockDiag} for $m=4$.
Let $\omega\in\F_{q^4}$ be a primitive element and let~$\Omega$ and $G^{(j)},\,j\in\Omega,$ be as in that proposition.
For every $j\in\Omega$ let $\cN^{(j)}=(\F^4,\,\rho^{(j)})$ be the associated \qM{}, thus $\rho^{(j)}(\rowsp(Y))=\rk(G^{(j)}Y\T)$ for
$Y\in\F^{y\times 4}$.
Note that the uniform $q$-matroids $\cM_1=\cM_2$ are represented by every matrix $(1\ \omega^j)\in\F_{q^4}^{1\times2},j\in\Omega$.
Therefore \cref{P-DirProdMat} tells us that for every $j\in\Omega$ the maps $\iota_i:\cM_i\longrightarrow\cN^{(j)},\,i=1,2$,
are  type-$\Delta$ for either~$\Delta$ under consideration.
Since~$(\cM,\iota_1,\iota_2)$ is a coproduct in \qMatd, this implies
\begin{equation}\label{e-epsj}
  \text{for all $j\in\Omega$ there exists a type-$\Delta$ map
$\epsilon_{j}:\cM\longrightarrow\cN^{(j)}$ such that $\epsilon_{j}\circ\iota_i=\iota_i$ for $i=1,2$.}
\end{equation}
Hence $\epsilon_{j}(v_1,0)=(v_1,0)$ and $\epsilon_{j}(0,v_2)=(0,v_2)$ for all $v_1,v_2\in\F^2$.
Then linearity of~$\epsilon_{j}$ implies $\epsilon_{j}=\id_{\F^4}$ for all $j\in\Omega$ and
we arrive at the commutative diagrams
\begin{equation}\label{e-Diagrq2}
\begin{array}{l}
   \begin{xy}
   (0,40)*+{\cM_1}="a";
   (0,20)*+{\cM}="b";%
   (0,0)*+{\cM_2}="c";%
   (20,20)*+{\cN^{(j)}}="d";%
   {\ar "a";"b"}?*!/_-2mm/{\iota_1};
    {\ar "a";"d"}?*!/_2mm/{\iota_1};
    {\ar "c";"b"}?*!/_2mm/{\iota_2};
   {\ar "c";"d"}?*!/_-2mm/{\iota_2};%
   {\ar "b";"d"};?*!/_2mm/{\text{\small id}};
   \end{xy}
\end{array}
\end{equation}
where $\id:\cM\longrightarrow\cN^{(j)}$ is type-$\Delta$.
\\
a) For $\Delta=\textsf{l-rp}$ this implies that~$\cM$ is isomorphic to each~$\cN^{(j)}$, which contradicts \cref{P-BlockDiag}(d).
Hence~$\cM_1$ and~$\cM_2$ do not have a coproduct in \qMatlrp.
\\
b) Let $\Delta=\textsf{l-s}$.
Since $\id:\cM\longrightarrow\cN^{(j)}$ is a strong map for all $j\in\Omega$, we conclude that the elements of
\[
     \cF':=\bigcup_{j\in\Omega}\cF(\cN^{(j)}).
\]
have to be flats of~$\cM$.
From \cref{L-Fprime} we know that
\begin{equation}\label{e-Fprime}
  \cF'=\{0,\F^4,T_1,T_2\}\cup\{V\in\cL(\F^4)\mid \dim V\leq 2, V\cap T_1=0=V\cap T_2\},
\end{equation}
where $T_1=\subspace{1000,\,0100}$ and $T_2=\subspace{0010,\,0001}$.
Let $\cF:=\cF(\cM)$. Then $\cF$ satisfies (F1)--(F3) from \cref{T-AxFlats}.
This implies in particular that (F3) has to be true for the flats in~$\cF'$.
To investigate this further we define for $V\in\cF'$
\[
    \CovFP(V)=\{F\in \cF'\mid V< F\text{ and there is no $Z\in\cF'$ such that }V<Z<F\}.
\]
We call the elements of $\CovFP(V)$ covers of~$V$ in~$\cF'$.
Of course, the covers of $V$ in~$\cF'$ need not be covers of~$V$ in the lattice~$\cF$.
Using \eqref{e-Fprime} we can determine the covers in $\cF'$ explicitly.
For ease of notation set
\begin{align*}
   \cF_1'&=\{V\in\cF'\mid \dim V=1\}=\{V\in\cL(\F^4)\mid \dim V=1,\,V\not\subseteq T_1\cup T_2\},\\
   \cF_2'&=\{V\in\cF'\mid \dim V=2,\,T_1\neq V\neq T_2\}=\{V\in\cL(\F^4)\mid \dim V=2,\,V\cap T_1=0=V\cap T_2\}.
\end{align*}
Note that
\[
   \cF_2'=\Bigg\{\rowsp\begin{pmatrix}1&0&a&b\\0&1&c&d\end{pmatrix}\,\Bigg|\,a,b,c,d\in\F,\,
   \det\begin{pmatrix}a&b\\c&d\end{pmatrix}\neq0\Bigg\}.
\]
Then
\begin{equation}\label{e-CovFP}
   \CovFP(V)=
   \begin{cases}\{T_1,T_2\}\cup\cF_1'&\text{if }V=0,\\
         \{W\in\cF_2'\mid V\leq W\}&\text{if }V\in\cF_1',\\
         \{\F^4\}&\text{if }V\in\cF_2'\cup\{T_1,T_2\}.
    \end{cases}
\end{equation}
Note that for $V\in\cF_1'$ the set $\{W\in\cF_2'\mid V\leq W\}$ is indeed nonempty.
Choose now $V=\subspace{(1,1,1,0)}\in\cF'_1$, and $v=(1,0,0,0)$.
By property (F3) the space $V+\subspace{v}$ has to be in a unique cover, say~$\hat{F}$, of~$V$ in~$\cF$.
Since $v\in T_1$, it is clear that $\hat{F}\not\in\CovFP(V)$.
Note that $\iota_1^{-1}(\hat{F})$ contains $\iota_1^{-1}(v)=(1,0)$, and therefore $\iota_1^{-1}(\hat{F})=\F^2$
thanks to~\eqref{e-F1F2} since~$\iota_1$ is a strong map.
Hence $\hat{F}$ contains the subspace $\subspace{(1,1,1,0),\,(1,0,0,0),(0,1,0,0)}=\subspace{(0,0,1,0),\,(1,0,0,0),(0,1,0,0)}$.
Using now~$\iota_2$ we conclude that $\hat{F}=\F^4$.
But now the gradedness of the lattice~$\cF$ (see \cref{T-FlatsMatroid}(b)) together with $\cF'\subseteq\cF$ and \eqref{e-CovFP} shows that~$\F^4$ is not a cover of~$V$ in~$\cF$.
Hence we arrive at a contradiction and thus there is no coproduct of $\cM_1$ and $\cM_2$ in \qMatls
\end{proof}

With $\Delta=\textsf{l-w}$ being discussed in the next section, it remains to consider the nonlinear cases.

\begin{theo}\label{T-NoCoprq2A}
Let $q\geq 2$ and $\Delta\in\{\textsf{w, s, rp}\}$.
There exist representable \qM{}s  that do not have a coproduct in \qMatd.
\end{theo}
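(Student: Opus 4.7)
The plan is to follow the strategy of \cref{T-NoCoprq2} with the same representable pair $\cM_1=\cM_2=\cU_1(\F^2)$, but to replace the linearity argument by structural information on the possibly nonlinear morphisms $\epsilon_j$. By \cref{T-NoCopr2} I may assume a putative coproduct has the form $(\cM,\iota_1,\iota_2)$ on $\F^4$ with~$\iota_i$ as in~\eqref{e-iota12F}; for every $j\in\Omega$, \cref{P-DirProdMat} together with the universal property yields a unique type-$\Delta$ map $\epsilon_j:\cM\longrightarrow\cN^{(j)}$ satisfying $\epsilon_j\circ\iota_i=\iota_i$.

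The main technical step is to pin down the shape of each $\epsilon_j$ up to $\cL$-equivalence. Since $\epsilon_j(e_1),\ldots,\epsilon_j(e_4)$ coincide with $e_1,\ldots,e_4$ and are in particular linearly independent, \cref{T-SemiLinIso} makes $\epsilon_j$ bijective and $\cL$-equivalent to a semi-linear isomorphism~$\phi$. The requirement that $\phi$ induce the identity on the sublattices $\cL(T_1)$ and $\cL(T_2)$ is strong enough to trivialize its field automorphism and to force $\phi=c_1\cdot\diag(1,1,d_j,d_j)$ for some $c_1,d_j\in\F^*$, so that $\epsilon_j\sim_\cL\phi_{d_j}$ with $\phi_{d_j}:=\diag(1,1,d_j,d_j)$. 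The crucial observation is that $\phi_{d_j}$ is a $q$-matroid automorphism of $\cN^{(j)}$: right-multiplying $G^{(j)}$ by $\diag(1,1,d_j,d_j)$ amounts to scaling its second row by $d_j\in\F^*$, which does not alter the row space over $\F_{q^m}$. Hence $\phi_{d_j}$ preserves both $\cF(\cN^{(j)})$ and $\rho_{\cN^{(j)}}$.

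With this in hand, the cases $\Delta=\textsf{s}$ and $\Delta=\textsf{rp}$ reduce to the arguments already carried out in the linear setting. For $\Delta=\textsf{s}$, strongness of $\epsilon_j$ combined with \cref{P-EquivMaps}(b) gives $\phi_{d_j}^{-1}(\cF(\cN^{(j)}))\subseteq\cF(\cM)$, which by $\phi_{d_j}$-invariance simplifies to $\cF(\cN^{(j)})\subseteq\cF(\cM)$ for every $j\in\Omega$; thus $\cF'\subseteq\cF(\cM)$ and the gradedness argument of \cref{T-NoCoprq2}(b) with $V=\subspace{(1,1,1,0)}$ and $v=(1,0,0,0)$ applies verbatim. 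For $\Delta=\textsf{rp}$, the identity $\rho_{\cN^{(j)}}(\epsilon_j(V))=\rho_{\cN^{(j)}}(V)$ upgrades to $\rho_\cM=\rho_{\cN^{(j)}}$ for every $j$, so all of the $\cN^{(j)}$ would coincide, contradicting \cref{P-BlockDiag} when $m\geq 4$.

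The genuinely new case is $\Delta=\textsf{w}$, where the pull-back of flats is no longer available; this is the main obstacle. My plan is to test the universal property against $\cN=\cU_1(\F^4)$, a target chosen because every $\cL$-map with values in $\cU_1(\F^4)$ is automatically weak. Taking $\tau_1=0$ and $\tau_2=\iota_2$, I will exhibit two distinct weak $\cL$-maps $\cM\longrightarrow\cU_1(\F^4)$ satisfying $\epsilon\circ\iota_1=0$ and $\epsilon\circ\iota_2=\iota_2$: the linear projection $\pi_2:(v_1,v_2,v_3,v_4)\longmapsto(0,0,v_3,v_4)$, and the nonlinear map $\epsilon_0$ defined by $\epsilon_0(v)=v$ for $v\in T_2$ and $\epsilon_0(v)=0$ otherwise. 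The identity $\epsilon_0(V)=V\cap T_2$ confirms that $\epsilon_0$ is an $\cL$-map, and the values $\pi_2(e_1+e_3)=(0,0,1,0)\neq 0=\epsilon_0(e_1+e_3)$ show that the two maps differ, contradicting uniqueness.
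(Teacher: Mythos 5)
Your reduction for $\Delta=\textsf{s}$ and $\Delta=\textsf{rp}$ is correct: pinning $\epsilon_j$ down to a map $\cL$-equivalent to $\diag(1,1,d_j,d_j)$ via \cref{T-SemiLinIso} (the identity on $\cL(T_1)$ and $\cL(T_2)$ does kill the field automorphism and force this diagonal shape), and observing $G^{(j)}\diag(1,1,d_j,d_j)=\diag(1,d_j)G^{(j)}$ so that this map is an automorphism of $\cN^{(j)}$, legitimately transplants the linear arguments of \cref{T-NoCoprq2}. This is essentially the computation the paper carries out in part 3 of the sketch of \cref{T-NonCoprLclasses}. Note, however, that your route is very different from the paper's own proof of \cref{T-NoCoprq2A}: the paper handles all three types $\textsf{w,s,rp}$ at once by choosing the single target $\cN=\cU_3(\F^3)$ with nonlinear maps $\alpha_i(v)=\lambda_v e_i$ and showing that \emph{no} $\cL$-map $\epsilon:\F^4\to\F^3$ whatsoever satisfies $\epsilon\circ\iota_i=\alpha_i$ (a cardinality argument on images of $2$-spaces), so existence fails regardless of type and none of the semi-linear classification machinery is needed.

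There is a genuine gap in your $\Delta=\textsf{w}$ case. The claim that ``every $\cL$-map with values in $\cU_1(\F^4)$ is automatically weak'' is false as stated: a weak map $\epsilon:\cM\to\cU_1(\F^4)$ must satisfy $\min\{1,\dim\epsilon(V)\}\le\rho_\cM(V)$, which fails whenever $V$ is a nonzero subspace with $\rho_\cM(V)=0$ but $\epsilon(V)\neq0$. The morphisms $\iota_i$ being weak only gives the upper bound $\rho_\cM(\iota_i(V))\le\rho_i(V)$, so nothing you have stated rules out loops in $\cM$; if, say, $\subspace{(1,0,1,0)}$ were a loop of $\cM$, your map $\pi_2$ would not be weak and the uniqueness contradiction would evaporate (and a loop inside $T_2$ would disqualify both $\pi_2$ and $\epsilon_0$). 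The gap is fillable from material you already developed: for any fixed $j$ the map $\epsilon_j\sim_\cL\diag(1,1,d_j,d_j)$ is weak and rank-invariant for $\cN^{(j)}$, so $\rho_\cM(V)\ge\rho^{(j)}(\epsilon_j(V))=\rho^{(j)}(V)\ge 1$ for every $V\neq 0$ by \cref{P-BlockDiag}(b); hence $\cM$ is loop-free and both $\pi_2$ and $\epsilon_0$ are indeed weak. You should insert this step explicitly (and fix $m\ge 4$ so that \cref{P-BlockDiag} applies); with it, your uniqueness-violation argument for $\textsf{w}$ is sound, since both maps are $\cL$-maps satisfying $\epsilon\circ\iota_1=0$ and $\epsilon\circ\iota_2=\iota_2$ yet differ at $e_1+e_3$.
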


\begin{proof}
Let $\F=\F_q$. As in the proof of \cref{T-NoCoprq2} let $\cM_1=\cM_2=\cU_{1}(\F^2)$.
Consider the uniform \qM{} $\cN=\cU_{3}(\F^3)$.
Denote the rank functions of these $q$-matroids by $\rho_{\cM_1},\rho_{\cM_2}$, and $\rho_\cN$.
For any nonzero vector $v\in\F^2$ let $\lambda_v\in\F^*$ be its leftmost nonzero entry.
For $i=1,2$ define $\alpha_i:\F^2\longrightarrow\F^3$ via $\alpha_i(0)=0$ and
\[
   \alpha_1(v)=\lambda_v e_1 \text{ and }\alpha_2(v)=\lambda_ve_2 \ \text{ for all }v\in\F^2\setminus0,
\]
where $e_1,e_2$ are the first two standard basis vectors in~$\F^3$.
Then $\alpha_1,\alpha_2$ are $\cL$-maps.
In fact $\alpha_i(V)=\subspace{e_i}$ if $V\neq0$ and $\alpha_i(0)=0$.
We claim that $\alpha_i:\cM_i\longrightarrow\cN$ is type-$\Delta$ for each $\Delta\in\{\textsf{w, s, rp}\}$.
Indeed, every nonzero subspace $V\in\cL(\F^2)$ satisfies $\rho_{\cM_i}(V)=1=\rho_{\cN}(\alpha_i(V))$ and hence~$\alpha_i$ is rank-preserving (and thus weak).
Furthermore, note that every subspace $V\leq\F^3$ is a flat of $\cN$ and satisfies
$\alpha_i^{-1}(V)\in\{0,\,\F^2\}=\cF(\cM_i)$ (with $\alpha_i^{-1}(V)=0$ iff $e_i\not\in V$).
Thus $\alpha_i$ is strong.
\\
Now we are ready to show that $\cM_1$ and~$\cM_2$ do not have a coproduct in \qMatd.
Recall from \cref{T-NoCopr2} that if they do have a coproduct then they have one of the form $(\cM,\iota_1,\iota_2)$ with ground space~$\F^4$ and~$\iota_i$ as in~\eqref{e-iota12F}.
We will establish the non-existence of such a coproduct by showing that there is no $\cL$-map $\epsilon:\F^4\longrightarrow\F^3$ such that $\epsilon\circ\iota_i=\alpha_i$ for $i=1,2$.
\\
Assume that there does exist an $\cL$-map $\epsilon:\F^4\longrightarrow\F^3$ such that $\epsilon\circ\iota_i=\alpha_i$ for $i=1,2$.
Consider the subspaces $V=\subspace{(1,0,0,0),(0,0,1,0)}$ and  $W=\subspace{(1,0,0,0),(0,0,0,1)}$ of~$\F^4$.
Then $\epsilon(V)$ and $\epsilon(W)$ are subspaces of $\F^3$ of cardinality at most $q^2$ and contain $\subspace{e_1,e_2}$.
Hence $\epsilon(W)=\epsilon(V)=\subspace{e_1,e_2}$, and $\epsilon|_V$ and $\epsilon|_W$ are injective.
Since $e_1+e_2\in\epsilon(V)=\epsilon(W)$, there exist vectors $v\in V$ and $w\in W$ such that $\epsilon(v)=\epsilon(w)=e_1+e_2$.
These vectors must be of the form $v=\lambda(1,0,\mu,0)$ and $w=\lambda'(1,0,0,\mu')$ for some $\lambda,\lambda',\mu,\mu'\in\F^*$.
Set $U=\subspace{(1,0,\mu,0),(1,0,0,\mu')}$. Then $\epsilon(0,0,\mu,-\mu')=\alpha_2(\mu,-\mu')=\mu e_2\in\epsilon(U)$, and since
$\epsilon(U)$ is a subspace of cardinality at most $q^2$ which also contains $e_1+e_2$, we conclude that
$\epsilon(U)=\subspace{e_1,e_2}$.
Thus $\epsilon|_U$ is injective.
But this contradicts $\epsilon(v)=\epsilon(w)=e_1+e_2$.
All of this shows that there is no $\cL$-map $\epsilon$ with the desired properties and thus $\cM_1$ and $\cM_2$ do not have a coproduct in \qMatd.
\end{proof}

\section{A Coproduct in \mbox{\qMatlw}}\label{S-lw}

In this section we establish the existence of a coproduct in the category \qMatlw.
In fact, we will show that the direct sum, introduced in \cite[Sec.~7]{CeJu21} is such a coproduct.
For the sake of self-containment we will first give a concise description of the direct sum and provide the necessary proofs.
The description, different from the original one in \cite{CeJu21}, will make the presentation consistent with this paper and be helpful for proving the existence of a coproduct.
Alternative proofs of the statements below can be found at \cite[Prop.~22, Prop.~26, Def.~40]{CeJu21}.

We need to recall the notions of independent spaces and circuits of a \qM. Let $\cM=(E,\rho)$ be a \qM{}.
Then $V\in\cL(E)$ is \emph{independent} if $\rho(V)=\dim V$ and \emph{dependent} otherwise.
A dependent space all of whose proper subspaces are independent is called a \emph{circuit}.
We refer to \cite[Def.~7 and~14]{BCJ22} for the properties of the collections of independent spaces and the collection of circuits of a \qM{}.
We will only need the following two facts:
(a) every subspace of an independent space is independent;
(b) for every $V\in\cL(E)$ there exists an independent space $I\leq V$ such that $\rho(I)=\rho(V)$.

We start with the following result taken from~\cite{GLJ22Ind}.

\begin{theo}[\mbox{\cite[Thm.~3.9 and its proof]{GLJ22Ind}}]\label{T-Submodular}
Let $\tau:\cL(E)\longrightarrow\N_0$ be a map satisfying (R2), (R3) from \cref{D-qMatroid}.
Define $r_\tau:\cL(E)\longrightarrow \N_0,\ V\longmapsto \min\{\tau(X)+\dim V-\dim X\mid X\in\cL(V)\}$.
Then $r_\tau$ satisfies (R1)--(R3) and thus defines a \qM{} $\cM=(E,r_\tau)$.
Furthermore, a subspace~$V$ is independent in~$\cM$ if and only if $\tau(W)\geq\dim W$ for all $W\in V$.
\end{theo}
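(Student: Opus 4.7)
The plan is to establish (R1)--(R3) for $r_\tau$ directly from the corresponding (or weaker) properties of~$\tau$, then read off the characterization of independence from the definition. Throughout, I will use the reformulation
\[
   r_\tau(V) = \dim V + \min\{\tau(X)-\dim X \mid X\in\cL(V)\},
\]
which is just a rewriting of the defining minimum. The choice $X=0$ in this min gives $\tau(0)-0$, and with the implicit normalization $\tau(0)=0$ this shows $r_\tau(V)\leq \dim V$. Non-negativity is immediate since $\tau(X)\geq 0$ and $\dim V\geq\dim X$ for $X\leq V$. Thus (R1) holds.

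For monotonicity (R2) let $V\leq W$ and pick $Y\in\cL(W)$ achieving the minimum for $r_\tau(W)$. I will test the minimum for $r_\tau(V)$ with $X:=Y\cap V\in\cL(V)$. The dimension identity
\[
    \dim V-\dim(Y\cap V) = \dim(V+Y)-\dim Y \leq \dim W-\dim Y
\]
(using $V+Y\leq W$) combined with $\tau(X)\leq\tau(Y)$ from (R2) applied to~$\tau$ yields $\tau(X)+\dim V-\dim X\leq \tau(Y)+\dim W-\dim Y=r_\tau(W)$, giving $r_\tau(V)\leq r_\tau(W)$.

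For submodularity (R3) let $X\in\cL(V)$ and $Y\in\cL(W)$ achieve the minima defining $r_\tau(V)$ and $r_\tau(W)$, respectively. Then $X+Y\leq V+W$ and $X\cap Y\leq V\cap W$, so
\[
    r_\tau(V+W)+r_\tau(V\cap W)\leq \tau(X+Y)+\tau(X\cap Y)+\dim(V+W)+\dim(V\cap W)-\dim(X+Y)-\dim(X\cap Y).
\]
Submodularity of~$\tau$ applied to the pair $(X,Y)$ gives $\tau(X+Y)+\tau(X\cap Y)\leq \tau(X)+\tau(Y)$, and the two standard dimension identities $\dim(X+Y)+\dim(X\cap Y)=\dim X+\dim Y$ and $\dim(V+W)+\dim(V\cap W)=\dim V+\dim W$ collapse the remaining terms to $\dim V+\dim W-\dim X-\dim Y$. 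Adding everything gives $r_\tau(V+W)+r_\tau(V\cap W)\leq r_\tau(V)+r_\tau(W)$.

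Finally, the independence characterization is essentially free from the reformulation above: $V$ is independent in~$\cM$ iff $r_\tau(V)=\dim V$ iff $\min\{\tau(X)-\dim X\mid X\in\cL(V)\}=0$. Since $X=0$ already attains the value~$0$ (via $\tau(0)=0$), this minimum is always $\leq 0$ and equals $0$ precisely when $\tau(X)\geq\dim X$ for every $X\in\cL(V)$, which is the claimed criterion. The main technical step is the submodularity calculation; the subtle point to handle cleanly is the bookkeeping that combines the two minimizers into a test pair for $V+W$ and $V\cap W$, and verifying that the dimension correction terms telescope exactly.
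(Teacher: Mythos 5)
Your argument is correct, and it is the standard one: the paper itself does not prove \cref{T-Submodular} but imports it from \cite{GLJ21I}, and your verification of (R2) via the test space $Y\cap V$ together with $\dim V-\dim(Y\cap V)=\dim(V+Y)-\dim Y\leq\dim W-\dim Y$, and of (R3) by feeding the pair $(X+Y,\,X\cap Y)$ of the two minimizers into the definition and letting the two dimension identities telescope, is exactly how this construction is established. The one point worth making explicit is the normalization you call ``implicit'': the condition $\tau(0)=0$ is genuinely needed and is \emph{not} a consequence of (R2)--(R3) alone. For instance the constant map $\tau\equiv c$ with $c>0$ is monotone and submodular, yet $r_\tau(0)=c>0=\dim 0$, so (R1) fails and the independence criterion breaks (the minimum of $\tau(X)-\dim X$ over $X\in\cL(V)$ need no longer be $\leq 0$). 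So as literally transcribed in \cref{T-Submodular} the hypothesis is incomplete; the cited source assumes $\tau(0)=0$, and in the only application in this paper, $\tau=\rho_1'+\rho_2'$ in \cref{T-DirSum1}, one has $\tau(0)=0$ automatically, so nothing downstream is affected. With that hypothesis recorded, your proof is complete: non-negativity from $\tau\geq 0$ and $\dim X\leq\dim V$, the upper bound from $X=0$, and the independence characterization from reading off when the shifted minimum $\min\{\tau(X)-\dim X\mid X\in\cL(V)\}$, which is always $\leq 0$, equals $0$.
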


\begin{theo}[\mbox{\cite[Sec.~7]{CeJu21}}]\label{T-DirSum1}
Let $\cM_i=(E_i,\rho_i),\,i=1,2,$ be \qM{}s and set $E=E_1\oplus E_2$.
Let $\pi_i:E\longrightarrow E_i$ be the corresponding projection onto $E_i$.
Define $\rho'_i:\cL(E)\longrightarrow \N_0,\ V\longmapsto \rho_i(\pi_i(V))$ for $i=1,2$.
Then $\cM'_i=(E,\rho'_i)$ is a \qM{}.
Furthermore, let
\begin{equation}\label{e-rho}
  \rho:\cL(E)\longrightarrow \N_0,\ V\longmapsto \min\{\rho'_1(X)+\rho'_2(X)+\dim V-\dim X\mid X\in\cL(V)\}.
\end{equation}
Then $\cM=(E,\rho)$ is a \qM{}.
It is called the \emph{direct sum of $\cM_1$ and~$\cM_2$} and denoted by $\cM_1\oplus\cM_2$.
The collection of circuits of $\cM_1\oplus\cM_2$ is given by
\[
   \cC(\cM_1\oplus\cM_2)=\bigg\{C\in\cL(E)\,\bigg|\,
     \begin{array}{l} C\text{ is inclusion-minimal subject to the}\\ \text{condition }\rho'_1(C)+\rho'_2(C)\leq \dim C-1\end{array}\bigg\}.
\]
\end{theo}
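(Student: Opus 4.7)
The plan is to break the theorem into three parts and handle each in turn: (i) $\cM'_i=(E,\rho'_i)$ is a \qM, (ii) $\cM=(E,\rho)$ is a \qM, and (iii) the stated circuit description. Essentially all the technical content will be packaged into \cref{T-Submodular}.

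First I would verify (R1)--(R3) directly for each $\rho'_i$. (R1) is immediate from $\rho'_i(V)=\rho_i(\pi_i(V))\leq\dim\pi_i(V)\leq\dim V$. For (R2), monotonicity of $\pi_i$ and of $\rho_i$ combine. The only step with any content is (R3): using the identity $\pi_i(V+W)=\pi_i(V)+\pi_i(W)$, the containment $\pi_i(V\cap W)\leq\pi_i(V)\cap\pi_i(W)$ (which can be strict), and monotonicity plus submodularity of $\rho_i$, one chains
\[
 \rho'_i(V+W)+\rho'_i(V\cap W)\leq \rho_i(\pi_i(V)+\pi_i(W))+\rho_i(\pi_i(V)\cap\pi_i(W))\leq \rho'_i(V)+\rho'_i(W).
\]

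Second, I would produce $\cM=(E,\rho)$ by applying \cref{T-Submodular} with $\tau:=\rho'_1+\rho'_2$. Since the sum of two functions satisfying (R2) and (R3) satisfies them again, $\tau$ qualifies. The formula~\eqref{e-rho} is exactly the formula for $r_\tau$ in \cref{T-Submodular}, so $\rho=r_\tau$ automatically satisfies (R1)--(R3) and $(E,\rho)$ is a \qM.

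Third, for the circuit description I would use the independence criterion in \cref{T-Submodular}: $V$ is independent in $\cM$ iff $\tau(W)\geq\dim W$ for every $W\leq V$, i.e.\ iff $\rho'_1(W)+\rho'_2(W)\geq\dim W$ for all $W\leq V$. Consequently $V$ is dependent iff some $W\leq V$ satisfies $\rho'_1(W)+\rho'_2(W)\leq\dim W-1$. Now let $C$ be a circuit. It is dependent, so such a witness $W\leq C$ exists; but every proper subspace of $C$ is independent, so no proper $W\lneq C$ can be a witness. Hence the witness must be $C$ itself, giving $\rho'_1(C)+\rho'_2(C)\leq\dim C-1$, and by the same reasoning $C$ is inclusion-minimal with this property. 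Conversely, if $C$ is inclusion-minimal subject to $\rho'_1(C)+\rho'_2(C)\leq\dim C-1$, then $C$ is dependent by choosing $W=C$ in the criterion, and every proper subspace $W\lneq C$ is independent because each $W''\leq W$ lies strictly inside $C$ and hence, by minimality of $C$, satisfies $\rho'_1(W'')+\rho'_2(W'')\geq\dim W''$.

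The only places requiring care are the submodularity check for $\rho'_i$ (where one must note that $\pi_i(V\cap W)\leq\pi_i(V)\cap\pi_i(W)$ and absorb the slack via monotonicity of $\rho_i$) and the direction in the circuit argument that rules out proper subspaces as witnesses. Beyond that the proof is a straightforward application of \cref{T-Submodular}, which is where all the genuine work sits.
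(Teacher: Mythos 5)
Your proposal is correct and follows essentially the same route as the paper: a direct verification of (R1)--(R3) for $\rho'_i$ (with the same submodularity chain exploiting $\pi_i(V\cap W)\leq\pi_i(V)\cap\pi_i(W)$), followed by an application of \cref{T-Submodular} to $\tau=\rho'_1+\rho'_2$. The only difference is that you spell out how the circuit characterization follows from the independence criterion in \cref{T-Submodular}, a step the paper leaves implicit; your argument there is correct.
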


\begin{proof}
1) We show that $\rho'_1$ is indeed a rank function. Using that $\rho_1$ is a rank function,
we have $0\leq\rho_1(\pi_1(V))\leq\dim\pi_1(V)\leq \dim V$, and this shows (R1). (R2) is trivial.
For (R3) let $V,W\in E$. Note first that $\pi_1(V\cap W)\subseteq\pi_1(V)\cap\pi_1(W)$. Thus
\begin{align*}
    \rho'_1(V+W)&=\rho_1(\pi_1(V+W))=\rho_1(\pi_1(V)+\pi_1(W))\\
           &\leq\rho_1(\pi_1(V))+\rho_1(\pi_1(W))-\rho_1(\pi_1(V)\cap\pi_1(W))\\
           &\leq\rho_1(\pi_1(V))+\rho_1(\pi_1(W))-\rho_1(\pi_1(V\cap W))=\rho'_1(V)+\rho'_1(W)-\rho'_1(V\cap W).
\end{align*}
2) For the fact that $\rho$ is a rank function consider first the function $\tau(V)=\rho'_1(V)+\rho'_2(V)$.
It is clearly non-negative and one straightforwardly
shows that it satisfies (R2), (R3) from \cref{D-qMatroid}; see also \cite[Thm.~28]{CeJu21}.
Hence \cref{T-Submodular} implies that~$\rho$ is a rank function and that the collection of circuits is as stated.
\end{proof}

The \qM{}s~$\cM_1$ and $\cM_2$ are naturally embedded in the direct sum.

\begin{theo}[\mbox{\cite[Sec.~7]{CeJu21}}]\label{T-DirSum2}
Let the data be as in \cref{T-DirSum1} and in particular $E=E_1\oplus E_2$.
Let~$\iota_i$ be as in \eqref{e-iota12}. Let $V\in\cL(E_i)$. Then
\[
   \rho'_i(\iota_i(V))=\rho_i(V)=\rho(\iota_i(V))\ \text{ and }\ \rho'_j(\iota_i(V))=0\text{ for }j\neq i.
\]
As a consequence, $\iota_i$ induces linear rank-preserving isomorphisms between~$\cM_i$ and
$\cM|_{\iota_i(E_i)}$ and $(\cM'_i)|_{\iota_i(E_i)}$.
\end{theo}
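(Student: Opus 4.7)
The plan is to compute the three rank quantities directly from the definitions, exploiting the elementary identities $\pi_i\circ\iota_i=\id_{E_i}$ and $\pi_j\circ\iota_i=0$ for $j\neq i$. For the identity involving $\rho'_i$, observe that $\pi_i(\iota_i(V))=V$, hence $\rho'_i(\iota_i(V))=\rho_i(\pi_i(\iota_i(V)))=\rho_i(V)$. For the identity with $j\neq i$, note that $\pi_j(\iota_i(V))=0$, so $\rho'_j(\iota_i(V))=\rho_j(0)=0$.

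The main content is the identity $\rho(\iota_i(V))=\rho_i(V)$. The key observation is that, by linearity and injectivity of~$\iota_i$, every $X\in\cL(\iota_i(V))$ has the form $X=\iota_i(U)$ for a unique $U\in\cL(V)$ and satisfies $\dim X=\dim U$. Substituting into the minimum formula~\eqref{e-rho} and using the two identities just obtained, I get
\[
   \rho(\iota_i(V))=\min\{\rho_i(U)+\dim V-\dim U\mid U\in\cL(V)\}.
\]
Choosing $U=V$ yields the upper bound $\rho(\iota_i(V))\leq\rho_i(V)$. For the reverse inequality, I would invoke the standard unit-increase property of a rank function: for every $U\leq V$ in a \qM{} one has $\rho_i(V)-\rho_i(U)\leq\dim V-\dim U$. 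This follows by induction on $\dim V-\dim U$ using (R1) and~(R3), since extending $U$ by any single vector $w$ gives $\rho_i(U+\subspace{w})\leq\rho_i(U)+\rho_i(\subspace{w})-\rho_i(U\cap\subspace{w})\leq\rho_i(U)+1$. Rearranged, this says $\rho_i(U)+\dim V-\dim U\geq\rho_i(V)$ for every $U\in\cL(V)$, so the minimum equals $\rho_i(V)$.

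The concluding statement then follows at once. The corestriction $\iota_i:E_i\longrightarrow\iota_i(E_i)$ is a linear bijection, hence in particular a linear $\cL$-isomorphism, and the three equalities established above show that it is rank-preserving both as a map $\cM_i\longrightarrow\cM|_{\iota_i(E_i)}$ and as a map $\cM_i\longrightarrow(\cM'_i)|_{\iota_i(E_i)}$. Reading these same identities in reverse, the inverse $\iota_i^{-1}:\iota_i(E_i)\longrightarrow E_i$ is also linear and rank-preserving, so $\iota_i$ is an isomorphism in \qMatlrp{} in both cases. I expect no serious obstacle; the only step that is not pure definition-chasing is the lower bound in the minimum formula, and it reduces to the well-known consequence of (R1) and~(R3) sketched above.
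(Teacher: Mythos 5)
Your proof is correct and follows essentially the same route as the paper: both arguments show that the minimum in \eqref{e-rho} is attained at the full space, using $\rho'_2(X)=0$ for $X\leq\iota_1(E_1)$ together with the fact that the rank can drop by at most the codimension. The only cosmetic difference is that you establish that last fact by induction on one-dimensional extensions inside $E_i$, whereas the paper gets it in one line by writing $\iota_1(V)=X\oplus Y$ and applying submodularity and (R1) to $Y$.
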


\begin{proof}
$\rho'_i(\iota_i(V))=\rho_i(V)$ follows from $\pi_i(\iota_i(V))=V$ and $\rho'_j(\iota_i(V))=0$ is clear because $\pi_j(\iota_i(V))=0$.
We show next the identity $\rho'_i(\iota_i(V))=\rho(\iota_i(V))$ for $i=1$.
Choose $X\leq \iota_1(V)$ and write $\iota_1(V)=X\oplus Y$. Using that~$\rho'_1$ is a rank function, we obtain
$\rho'_1(\iota_1(V))\leq \rho'_1(X)+\rho'_1(Y)\leq\rho'_1(X)+\dim Y=\rho'_1(X)+\dim\iota_1(V)-\dim X=\rho'_1(X)+\rho'_2(X)+\dim\iota_1(V)-\dim X$,
where the last step follows from the fact that $X\in\iota_1(E_1)$, hence $\rho'_2(X)=0$.
All of this shows that the minimum in~\eqref{e-rho} is
attained by $\rho'_1(\iota_1(V))$, and therefore $\rho'_1(\iota_1(V))=\rho(\iota_1(V))$.
The consequence is clear.
\end{proof}

Thanks to the above we may and will from now on identify subspaces~$V$ in~$E_i$ with their image~$\iota_i(V)$.
Note that by the above,~$E_i$ is a loop space of~$\cM'_j$ for $j\neq i$ (by definition a loop space is a space with rank~$0$).
The process from~$\cM_1$ to $\cM'_1$ is called \emph{adding a loop space} in \cite{CeJu21}.
It is a special instance of the direct sum because $\cM'_i\cong\cM_i\oplus\cU_0(E_j)$ for $i\neq j$.

We wish to mention that the direct sum also satisfies $(\cM_1\oplus\cM_2)/E_i\cong\cM_j$ for $j\neq i$.
This has been shown in  \cite[Cor.~48]{CeJu21} with the aid of the identity
$\rho(V_1\oplus V_2)=\rho(V_1)+\rho(V_2)$ for all subspaces $V_i\leq E_i$; see \cite[Thm.~47]{CeJu21}.

We now turn to our main result stating that $\cM_1\oplus\cM_2$ is a coproduct in \qMatlw.
We need the following lemma.

\begin{lemma}\label{T-NotWeak}
Let $\cM_i=(E_i,\rho_i),i=1,2$, be \qM{}s and $\phi:\cM_1\longrightarrow\cM_2$ be a linear map.
Suppose~$\phi$ is not a weak map.
Then there exists a circuit $C$ of~$\cM_1$ such that
\begin{equation}\label{e-Circuit}
  \rho_2(\phi(C))>\rho_1(C),\quad \dim C=\dim\phi(C),\quad \phi(C)\text{ is an independent space in~$\cM_2$.}
\end{equation}
\end{lemma}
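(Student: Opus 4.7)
The idea is to take a counterexample $V$ to the weak-map inequality of minimal dimension and show that $V$ itself must already be a circuit of $\cM_1$ with all three required properties, so that one can simply take $C:=V$. Concretely, set $\cS:=\{V\in\cL(E_1)\mid \rho_2(\phi(V))>\rho_1(V)\}$; by hypothesis $\cS\neq\emptyset$, so one may pick $V\in\cS$ of minimal dimension.

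The first step is to show $\phi|_V$ is injective. If there were a nonzero $v\in V\cap\ker\phi$, any complement $W$ of $\subspace{v}$ in $V$ would satisfy $\phi(W)=\phi(V)$ by linearity, hence
\[
  \rho_2(\phi(W))=\rho_2(\phi(V))>\rho_1(V)\geq \rho_1(W),
\]
putting $W\in\cS$ with $\dim W<\dim V$, against minimality. In particular $\dim\phi(V)=\dim V$.

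The second step is to show $\phi(V)$ is independent in $\cM_2$. If not, fact~(b) recalled just before the lemma yields an independent subspace $I\leq\phi(V)$ with $\rho_2(I)=\rho_2(\phi(V))$ and $\dim I<\dim\phi(V)$. Since $\phi|_V$ is a linear isomorphism onto $\phi(V)$, the space $V':=(\phi|_V)^{-1}(I)$ is a subspace of $V$ with $\phi(V')=I$ and $\dim V'<\dim V$, and
\[
   \rho_2(\phi(V'))=\rho_2(I)=\rho_2(\phi(V))>\rho_1(V)\geq\rho_1(V'),
\]
again contradicting minimality.

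The final step is to show $V$ itself is a circuit. For any proper subspace $W<V$, the image $\phi(W)\leq\phi(V)$ is independent in $\cM_2$ by fact~(a), and $\phi|_W$ is injective, so $\rho_2(\phi(W))=\dim W$. Since $W\notin\cS$, the inequalities $\dim W=\rho_2(\phi(W))\leq\rho_1(W)\leq\dim W$ force $\rho_1(W)=\dim W$, i.e., $W$ is independent in $\cM_1$. Because $\rho_1(V)<\rho_2(\phi(V))=\dim V$, $V$ is dependent, and thus a circuit; in particular $\rho_1(V)=\dim V-1$. Setting $C:=V$ then yields $\rho_2(\phi(C))=\dim C>\rho_1(C)$, $\dim C=\dim\phi(C)$, and $\phi(C)$ independent in $\cM_2$, as required. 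The one delicate point is the second reduction: it relies crucially on both the linearity of $\phi$ (so that the preimage of $I$ inside $V$ is a genuine subspace of the correct dimension) and on the rank-witnessing property of independent subspaces in $\cM_2$. Beyond that, the argument is essentially bookkeeping.
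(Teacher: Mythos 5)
Your proof is correct and follows the same strategy as the paper's: take a dimension-minimal counterexample $V$ to the weak-map inequality, show $\phi|_V$ is injective and $\phi(V)$ is independent in $\cM_2$, and conclude that $V$ is the desired circuit. Your handling of the independence step is in fact slightly more streamlined than the paper's (which first establishes the rank identities $\rho_1(W)=\rho_1(V)=\rho_2(\phi(W))=\rho_2(\phi(V))-1$ for all hyperplanes $W$ of $V$ and routes the contradiction through those), but the underlying argument is the same.
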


\begin{proof}
Since~$\phi$ is not weak there exists an inclusion-minimal subspace $V\in\cL(E_1)$ such that $\rho_2(\phi(V))>\rho_1(V)$.
Clearly $V\neq0$.
We will show that~$V$ is the desired circuit and proceed in several steps.
\\
1) We first establish the following identities
\begin{equation}\label{e-VW}
  \rho_1(W)=\rho_1(V)=\rho_2(\phi(W))=\rho_2(\phi(V))-1\text{ for all $W\leq V$ with $\dim W=\dim V-1$}.
\end{equation}
Let $W\leq V$ with $\dim W=\dim V-1$. Write $V=W\oplus X$, thus~$X$ is a 1-space.
Then $\phi(V)=\phi(W)+\phi(X)$ and therefore $\dim\phi(V)\leq\dim\phi(W)+1$.
Furthermore, by minimality of~$V$ we have $\rho_2(\phi(W))\leq\rho_1(W)$.
Using the properties of rank functions, we obtain
\begin{equation}\label{e-rho12}
   \rho_2(\phi(W))\leq\rho_1(W)\leq\rho_1(V)<\rho_2(\phi(V)),
\end{equation}
and thus $\phi(W)\lneq\phi(V)$, which means $\dim\phi(V)=\dim\phi(W)+1$, and in fact
$\phi(V)=\phi(W)\oplus\phi(X)$. This implies $\rho_2(\phi(V))\leq\rho_2(\phi(W))+1$.
Together with~\eqref{e-rho12} this yields $\rho_2(\phi(V))=\rho_2(\phi(W))+1$ as well as
$\rho_2(\phi(W))=\rho_1(W)=\rho_1(V)$.
This establishes~\eqref{e-VW}.
\\
2) We show that $\phi|_V$ is injective. Assume to the contrary that there exists $v\in V\setminus 0$ such that $\phi(v)=0$.
Then $V=W\oplus\subspace{v}$ for some subspace~$W$ of~$V$ and thus $\phi(V)=\phi(W)$.
But this contradicts~\eqref{e-VW}. Hence~$\phi|_V$ is injective and $\dim V=\dim\phi(V)$.
\\
3) We show that $\phi(V)$ is independent in~$\cM_2$. Assume to the contrary that $\rho_2(\phi(V))<\dim\phi(V)$.
Then there exists an independent subspace $I\lneq\phi(V)$ such that $\rho_2(\phi(V))=\rho_2(I)=\dim I$.
Since $\phi|_V$ is injective, there exists a subspace $J\lneq V$ such that $\phi(J)=I$ and $\dim J=\dim I$.
Now we have $\dim J=\dim I=\rho_2(I)\leq\rho_1(J)\leq\dim J$, where the first inequality follows from the
minimality of~$V$ subject to $\rho_2(\phi(V))>\rho_1(V)$.
Thus we have equality throughout, which shows that~$J$ is independent in~$\cM_1$.
Furthermore, since $J\lneq V$ there exists a subspace $W\lneq V$ with $\dim W=\dim V-1$ such that $J\leq W\lneq V$.
Applying~$\phi$ we arrive at $\rho_2(I)\leq\rho_2(\phi(W))\leq \rho_2(\phi(V))=\rho_2(I)$, and we have equality throughout.
But this contradicts~\eqref{e-VW} and therefore $\phi(V)$ is independent in~$\cM_2$.
\\
4) It remains to show that~$V$ is a circuit. \eqref{e-VW} together with~2) and~3) implies that
for every hyperplane~$W$ of~$V$ we have
$\rho_1(W)=\rho_1(V)=\rho_2(\phi(V))-1=\dim\phi(V)-1=\dim V-1=\dim W$.
This shows that~$V$ is dependent and~$W$ is independent in~$\cM_1$.
Since~$W$ was an arbitrary hyperplane of~$V$, we conclude that~$V$ is a circuit.
\end{proof}

\begin{theo}\label{T-DirSumCoPr}
Let $\cM_i=(E_i,\rho_i),\,i=1,2,$ be \qM{}s and $\cM=\cM_1\oplus\cM_2=(E,\rho)$ be the direct sum as defined in \cref{T-DirSum1}.
Let $\iota_i$ be as in \eqref{e-iota12}.
Then $(\cM,\iota_1,\iota_2)$ is a coproduct of~$\cM_1$ and~$\cM_2$ in the category \qMatlw.
\end{theo}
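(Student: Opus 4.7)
The plan is to verify the universal property of a coproduct for $(\cM, \iota_1, \iota_2)$ directly. By \cref{T-DirSum2} the embeddings $\iota_i$ are linear and rank-preserving, hence linear weak, so they qualify as morphisms in \qMatlw{}.

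Fix a $q$-matroid $\cN = (N, \rho_\cN)$ together with linear weak maps $\tau_i : \cM_i \longrightarrow \cN$ for $i=1,2$. Since $E = E_1 \oplus E_2$ as $\F$-vector spaces, the universal property of the vector-space direct sum supplies a unique linear map $\epsilon : E \longrightarrow N$ satisfying $\epsilon \circ \iota_i = \tau_i$, explicitly $\epsilon(v_1 + v_2) = \tau_1(v_1) + \tau_2(v_2)$ for $v_i \in E_i$. Uniqueness of $\epsilon$ among all linear maps factoring $\tau_1, \tau_2$ through $\iota_1, \iota_2$ is automatic, so the only remaining task is to verify that this $\epsilon$ is a weak map from $\cM$ to $\cN$.

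I would argue by contradiction via \cref{T-NotWeak}. If $\epsilon$ were not weak, there would exist a circuit $C$ of $\cM$ with $\rho_\cN(\epsilon(C)) > \rho(C)$, $\dim \epsilon(C) = \dim C$, and $\epsilon(C)$ independent in $\cN$. Since every proper subspace of a circuit is independent in $\cM$ while $C$ itself is dependent, one has $\rho(C) = \dim C - 1$, whence $\rho_\cN(\epsilon(C)) = \dim C$. The circuit characterisation in \cref{T-DirSum1} combined with the definition~\eqref{e-rho} of $\rho$ forces the equality $\rho'_1(C) + \rho'_2(C) = \dim C - 1$ (the upper bound comes from circuit minimality; the lower bound from $\rho(C) \leq \rho'_1(C)+\rho'_2(C)$, obtained by choosing $X=C$ in~\eqref{e-rho}). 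Setting $C_i := \pi_i(C)$ so that $\rho'_i(C) = \rho_i(C_i)$, and using $\epsilon(v) = \tau_1(\pi_1(v)) + \tau_2(\pi_2(v))$, we obtain $\epsilon(C) \subseteq \tau_1(C_1) + \tau_2(C_2)$. Submodularity of $\rho_\cN$ and weakness of $\tau_1, \tau_2$ then yield
\begin{align*}
\dim C \;=\; \rho_\cN(\epsilon(C)) &\leq \rho_\cN(\tau_1(C_1)) + \rho_\cN(\tau_2(C_2)) \\
&\leq \rho_1(C_1) + \rho_2(C_2) \;=\; \rho'_1(C) + \rho'_2(C) \;=\; \dim C - 1,
\end{align*}
a contradiction. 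Hence $\epsilon$ is weak.

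The main obstacle I expect is simply the weakness check for $\epsilon$; once \cref{T-NotWeak} is in hand, the problem is reduced from all subspaces to circuits of $\cM$, and then the explicit circuit description in \cref{T-DirSum1} delivers precisely the numerical identity $\rho'_1(C) + \rho'_2(C) = \dim C - 1$ that clashes with the subadditivity bound coming from the weakness of $\tau_1$ and $\tau_2$. Everything else is formal: existence and uniqueness of $\epsilon$ as a linear map are forced by the vector-space direct sum, and membership of $\iota_i$ in \qMatlw{} is recorded in \cref{T-DirSum2}.
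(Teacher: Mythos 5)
Your proposal is correct and follows essentially the same route as the paper: existence and uniqueness of the linear $\epsilon$ are forced by the vector-space direct sum, and weakness is reduced to circuits via \cref{T-NotWeak}, with the bound $\rho'_1(C)+\rho'_2(C)\leq\dim C-1$ from the circuit description in \cref{T-DirSum1} clashing with the subadditivity estimate coming from the weakness of the $\tau_i$. The only cosmetic difference is that you phrase the final step as a contradiction for one offending circuit (and derive the exact value $\rho(C)=\dim C-1$, which is more than needed), whereas the paper shows directly that no circuit can satisfy \eqref{e-Circuit}; the content is identical.
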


\begin{proof}
First of all, thanks to \cref{T-DirSum2} the maps $\iota_i:\cM_i\longrightarrow\cM$ are rank-preserving, thus weak.
Let $\cN=(\tilde{E},\tilde{\rho})$ be a \qM{} and $\alpha_i:\cM_i\longrightarrow\cN$ be  linear weak maps.
We have to show the existence of a unique linear weak map $\epsilon:\cM\longrightarrow\cN$ such that $\epsilon\circ\iota_i=\alpha_i$ for $i=1,2$.
Since $\cM$ has ground space $E_1\oplus E_2$ it is clear that the only linear map satisfying $\epsilon\circ\iota_i=\alpha_i$ is given by
$\epsilon(v_1+v_2)=\alpha_1(v_1)+\alpha_2(v_2)$ for all $v_i\in E_i$; recall that we identify $E_i$ with $\iota_i(E_i)$.
Thus it remains to show that this map~$\epsilon$ is weak.
We will use \cref{T-NotWeak}.
Choose any circuit~$C$ in~$\cM$.
Denote by $\rho'_i$ and~$\pi_i$ the maps as in \cref{T-DirSum1} and set $X_i=\pi_i(C)$.
Then $C\leq X_1\oplus X_2$ and, since  $X_i\leq E_i$, we obtain
$\epsilon(C)\leq\epsilon(X_1\oplus X_2)=\epsilon(X_1)+\epsilon(X_2)=\alpha_1(X_1)+\alpha_2(X_2)$.
Applying the rank function~$\tilde{\rho}$ and using the weakness of the maps~$\alpha_i$,  we compute
\begin{align*}
    \tilde{\rho}(\epsilon(C))&\leq\tilde{\rho}(\alpha_1(X_1)+\alpha_2(X_2))\leq \tilde{\rho}(\alpha_1(X_1))+\tilde{\rho}(\alpha_2(X_2))\\
     &\leq\rho_1(X_1)+\rho_2(X_2)=\rho'_1(C)+\rho'_2(C)\leq\dim C-1,
\end{align*}
where the last step follows from  \cref{T-DirSum1} because~$C$ is a circuit.
This shows that $\epsilon(C)$ is not an independent space of~$\cN$ with the same dimension as~$C$.
Thus, no circuit in~$\cM$ satisfies~\eqref{e-Circuit}, and this shows that~$\epsilon$ is weak.
\end{proof}

\begin{exa}\label{E-UniformWeak}
Let $\F=\F_q$ and consider the \qM{}s $\cM_1=\cM_2=\cU_1(\F^2)$,
i.e., $\rho_1(V)=\rho_2(V)=\min\{1,\dim V\}$ for all $V\in\cL(\F^2)$.
The direct sum $\cM_1\oplus\cM_2$ has been determined in \cite[Ex.~48]{CeJu21} using the definition of
its rank function in~\eqref{e-rho}.
In this example we will derive the same result by making use of the fact that $\cM_1\oplus\cM_2$ is a coproduct in \qMatlw.
Let~$\omega$ be a primitive element of $\F_{q^4}$.
Then $G=( 1\ \omega^2)\in\F_{q^4}^{1\times 2}$ represents~$\cM_1=\cM_2$.
Consider $G^{(2)}$ as in \cref{P-BlockDiag} and let $\cN^{(2)}=(\F^4,\rho^{(2)})$ be the \qM{} generated by~$G^{(2)}$.
Thanks to \cref{P-BlockDiag} we have
\[
    \rho^{(2)}(V)=\min\{2,\dim V\}\text{ for }V\in\cL(\F^4)\setminus\{T_1,T_2\}\text{ and }\rho^{(2)}(T_1)=\rho^{(2)}(T_2)=1,
\]
where~$T_1,\,T_2$ are as in \cref{P-BlockDiag}.
Furthermore, \cref{P-DirProdMat} provides us with the linear rank-preserving, hence weak, maps $\iota_i:\cM_i\longrightarrow\cN^{(2)}$ for
$i=1,2$.
As a consequence, the map $\epsilon: \cM_1\oplus\cM_2\longrightarrow\cN^{(2)}$ from the previous proof is the
identity map on~$\F^4$. It thus induces a weak map from $\cM_1\oplus\cM_2$ to $\cN^{(2)}$, and this means
that the rank function $\rho$ of $\cM_1\oplus\cM_2$ satisfies
\[
   \rho(V)\geq\min\{2,\dim V\}\text{ for }V\in\cL(\F^4)\setminus\{T_1,T_2\}\text{ and }\rho(T_1)\geq1,\,\rho(T_2)\geq1.
\]
Using that $\iota_i$ are also rank-preserving maps from $\cM_i$ to $\cM_1\oplus\cM_2$, see \cref{T-DirSum2}, we obtain that
$\rho(T_1)=\rho(T_1)=1$ and $\rho(\F^4)=\rho(\iota_1(\F^2)\oplus\iota_2(\F^2))\leq \rho(\iota_1(\F^2))+\rho(\iota_2(\F^2))=
\rho_1(\F^2)+\rho_2(\F^2)=2$.
This implies that $\rho=\rho^{(2)}$ and thus $\cN^{(2)}=\cM_1\oplus\cM_2$.
\end{exa}

We close this section with the following characterization of the direct sum.

\begin{rem}\label{R-DirSumMax}
The fact that $\cM_1\oplus\cM_2$ is a coproduct in \qMatlw{} can be translated into the following
characterization of the direct sum.
For any finite-dimensional $\F$-vector space~$E$ define the set $\cS_E=\{\rho \mid\rho \text{ is a rank function on } E\}$ and the
partial order $\rho\leq\rho':\Longleftrightarrow\rho(V)\leq \rho'(V)$ for all $V\in\cL(E)$.
Let now $\cM_i=(E_i,\rho_i),\,i=1,2,$ be \qM{}s and set $E=E_1\oplus E_2$.
Then the set $\hat{\cS}=\{\rho\in\cS_E\mid \rho|_{E_i}\leq\rho_i\}$ has a unique maximum, say~$\hat{\rho}$,
and $\cM_1\oplus\cM_2=(E,\hat{\rho})$.
\end{rem}

\section{$q$-Matroids with $\cL$-Classes}\label{S-Lclass}
In this short section we discuss a different approach to maps between $q$-matroids.
Since the $q$-matroid structure is based on subspaces and different $\cL$-maps may induce the same map on subspaces (i.e., are
$\cL$-equivalent), one may define maps between $q$-matroids as maps between subspace lattices induced by $\cL$-maps.
Precisely, for an $\cL$-map $\phi:E_1\longrightarrow E_2$ define the \emph{$\cL$-class} as
$[\phi]=\{\psi:E_1\longrightarrow E_2\mid \psi\sim_\cL\phi\}$.
Then
\begin{equation}\label{e-phiclass}
      [\phi]:\cL(E_1)\longrightarrow\cL(E_2),\,V\longmapsto \phi(V),
\end{equation}
is well-defined (and equals $\phi_\cL$).
Since the type of a map (see \cref{D-StrMap}) is invariant under $\cL$-equivalence, this gives rise to strong, weak, and rank-preserving $\cL$-classes between $q$-matroids.
Furthermore, we call an $\cL$-class \emph{linear}, if it contains a linear $\cL$-map (not all maps in a linear $\cL$-class are linear since by \cref{P-S1dim} we may tweak a linear map into an $\cL$-equivalent nonlinear $\cL$-map).
Setting $[\phi_1]\circ[\phi_2]:=[\phi_1\circ\phi_2]$, which is indeed well-defined,
we obtain categories
\[
       \mbox{\qMat}^{[\Delta]}\text{ for }\Delta\in\{\textsf{s, rp, w, l-s, l-rp, l-w}\},
\]
in which the morphisms are $\cL$-classes of the specified type.
Being isomorphic or linearly isomorphic in the sense of \cref{D-IsomMatr} does not change when moving to $\cL$-classes.

It turns out that $\cL$-classes are not better behaved than $\cL$-maps.
The following result suggests that in fact $\cL$-maps, rather than $\cL$-classes, are the appropriate notion of maps between $q$-matroids.

\begin{theo}\label{T-NonCoprLclasses}
None of the categories  \qMat$^{[\Delta]},\,\Delta\in\{\textsf{s, rp, w, l-s, l-rp, l-w}\},$ has coproduct except for the case
$(q,[\Delta])=(2,[\textsf{l-w}])$.
\end{theo}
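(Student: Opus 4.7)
The plan is to separate the cases $q=2$ and $q\geq 3$. When $q=2$, \cref{P-EquivMaps}(a) with $\F^*=\{1\}$ forces any two $\cL$-equivalent maps to agree pointwise, so each $\cL$-class is a singleton and the categories \qMat$^{[\Delta]}$ and \qMat$^{\Delta}$ coincide. The six statements for $q=2$ then follow at once from \cref{T-NoCoprq2,T-NoCoprq2A,T-DirSumCoPr}. The remaining work is for $q\geq 3$, and I would begin by establishing the $\cL$-class analog of \cref{T-NoCopr2}: any coproduct of representable $\cM_1,\cM_2$ in \qMat$^{[\Delta]}$ may be taken of the form $(\tilde\cM,[\iota_1],[\iota_2])$ on the ground space $E_1\oplus E_2$. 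This transfers from \cref{T-NoCopr2} after choosing linear representatives throughout and noting that $\epsilon\circ\xi_i\sim_{\cL}\alpha_i$ still gives $\epsilon(\xi_i(v_i))\in\langle\alpha_i(v_i)\rangle\setminus\{0\}$ for $v_i\neq 0$, which is exactly what Claims~1--3 of that theorem need.

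For the linear cases $[\Delta]\in\{[\textsf{l-rp}],[\textsf{l-s}]\}$ I would rerun the proof of \cref{T-NoCoprq2} with $\cM_1=\cM_2=\cU_1(\F^2)$ and the matroids $\cN^{(j)}$ from \cref{P-BlockDiag}. A linear representative $\epsilon_j$ of the $\cL$-class produced by the universal property satisfies $\epsilon_j\circ\iota_i\sim_{\cL}\iota_i$, and \cref{P-EquivMaps}(e) forces it to be the block-diagonal map $(v_1,v_2)\mapsto(\lambda_1^jv_1,\lambda_2^jv_2)$ for some scalars $\lambda_i^j\in\F^*$. The crucial observation is that scaling the rows of $G^{(j)}$ by $\lambda_1^j,\lambda_2^j$ does not change the represented $q$-matroid, so the pullback of $\cN^{(j)}$ along $\epsilon_j$ is again $\cN^{(j)}$; hence $\id_{\F^4}\colon\cM\to\cN^{(j)}$ is a type-$\Delta$ map and the proof of \cref{T-NoCoprq2} applies verbatim. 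For the non-linear cases $[\Delta]\in\{[\textsf{s}],[\textsf{rp}],[\textsf{w}]\}$ I would reuse the construction of \cref{T-NoCoprq2A} with $\alpha_i\colon\cU_1(\F^2)\to\cU_3(\F^3)$; the weaker hypothesis $\epsilon\circ\iota_i\sim_{\cL}\alpha_i$ still yields $\epsilon(\iota_i(v_i))\in\langle e_i\rangle\setminus\{0\}$, which is exactly what the vector-chasing on $V$, $W$, $U$ in that proof needs to reach the contradiction $v=w=0\neq e_1+e_2$.

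The remaining case $[\Delta]=[\textsf{l-w}]$ with $q\geq 3$ is where $\cL$-classes genuinely change the picture and is where I expect the main obstacle. Again take $\cM_1=\cM_2=\cU_1(\F^2)$ and reduce a putative coproduct to $(\cM,[\iota_1],[\iota_2])$ on $\F^4$; applying the universal property to $\cN=\cM_1\oplus\cM_2$ with $\alpha_i=\iota_i$ produces $[\epsilon]$ whose linear representative is block-diagonal, $\epsilon(v_1,v_2)=(\lambda_1v_1,\lambda_2v_2)$. For any $c\in\F^*\setminus\{1\}$ the diagonal map $D=\diag(1,1,c,c)$ is a rank-preserving automorphism of $\cM_1\oplus\cM_2$, since $\pi_i\circ D$ agrees with $\pi_i$ up to a nonzero scalar on each factor and the formula \eqref{e-rho} is manifestly invariant under such rescalings; hence $\epsilon_c:=D\circ\epsilon$ is another linear weak map $\cM\to\cN$ satisfying $[\epsilon_c\circ\iota_i]=[\iota_i]$. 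But \cref{P-EquivMaps}(e) gives $\epsilon_c\sim_{\cL}\epsilon$ only if $c=1$, so the factorization $[\epsilon]$ is not unique, contradicting the coproduct axiom.
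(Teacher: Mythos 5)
Your proposal is correct and takes essentially the same route as the paper's sketch: the same reduction of a putative coproduct to the form $(\tilde{\cM},[\iota_1],[\iota_2])$, the same test objects $\cU_1(\F^2)$, $\cN^{(j)}$, $\cU_3(\F^3)$ and $\cM_1\oplus\cM_2$, the same use of \cref{P-EquivMaps}(e) to pin a linear representative of $[\epsilon_j]$ down to $\diag(\lambda_1,\lambda_1,\lambda_2,\lambda_2)$ combined with the observation that this matrix is an automorphism of $\cN^{(j)}$ (the paper's identity $G^{(j)}\diag(1,1,a_j,a_j)=\diag(1,a_j)G^{(j)}$), and the same failure of uniqueness under rescaling one summand for $[\textsf{l-w}]$ with $q\geq3$. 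Your upfront disposal of $q=2$ by noting that every $\cL$-class is then a singleton, so that \qMat$^{[\Delta]}$ and \qMat$^{\Delta}$ coincide, is a slightly tidier packaging of what the paper obtains case by case, but the substance is identical.
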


The proof is for the most part straightforward, but tedious. We provide a sketch.

\textit{Sketch of Proof.}
In essence one follows the proofs of Sections \ref{S-NoCopr} and \ref{S-lw} and replaces $\cL$-maps by their $\cL$-classes.
Consequently, equality of maps (which is pointwise) needs to be replaced by equality of $\cL$-classes on all 1-spaces
(see \cref{P-S1dim}(a)).
With this in mind, one verifies the following.
\\[.5ex]
1) The proof of \cref{T-NoCopr2} reducing coproducts to the form $(\cM,[\iota_1],[\iota_2])$ carries through without additional changes.
\\[.5ex]
2) $\Delta\in\{\textsf{s, rp, w}\}$. The proof of \cref{T-NoCoprq2A} also generalizes without additional changes.
\\[.5ex]
3) $\Delta\in\{\textsf{l-s, l-rp}\}$. The proof of \cref{T-NoCoprq2} needs a bit more care.
With the strategy described above we arrive at statement~\eqref{e-epsj}, which now reads as
\[
   \text{for all $j\in\Omega$ there exists a type-$[\Delta]$ map $[\epsilon_j]$ such that $[\epsilon_{j}\circ\iota_i]=[\iota_i]$ for $i=1,2$.}
\]
Thus $[\epsilon_{j}](\subspace{(v_1,0)})=\subspace{(v_1,0)}$ and $[\epsilon_{j}](\subspace{(0,v_2)})=\subspace{(0,v_2)}$
for all $v_1,v_2\in\F^2$ and all $j\in\Omega$.
Since the $\cL$-class $[\epsilon_{j}]$ is linear, \cref{P-EquivMaps}(e) implies that, without loss of generality, there exist $a_j\in\F^*$ such that
\[
   \epsilon_{j}(x_1,x_2,x_3,x_4)=(x_1,x_2,x_3,x_4)\diag(1,1,a_j,a_j)
    \text{ for all }(x_1,x_2,x_3,x_4)\in\F^4,
\]
where $\diag(1,1,a_j,a_j)$ is the $4\times4$-diagonal matrix with the specified diagonal entries.
In particular,~$\epsilon_j$ need not be the identity and in Diagram~\eqref{e-Diagrq2} we have potentially different maps
$[\epsilon_{j}]:\cM\longrightarrow\cN^{(j)}$ for each $j\in\Omega$.
Note that each~$\epsilon_j$ is an $\cL$-isomorphism.
Now we consider the two types of \cref{T-NoCoprq2}.
\\
a) Let $\Delta=\textsf{l-rp}$.
Then~$\cM$ is linearly isomorphic to each $\cN^{(j)}$, contradicting \cref{P-BlockDiag}(d).
\\
b) Let $\Delta=\textsf{l-s}$.
Consider the $q$-matroids $\cN^{(j)}=(\F^4,\rho^{(j)})$.
We claim that
$\rho^{(j)}(V)=\rho^{(j)}([\epsilon_j](V))$.
Indeed, let $V=\rowsp(Y)$ for some $Y\in\F^{y\times 4}$.
Then $\epsilon^{(j)}(V)=\rowsp(Y\diag(1,1,a_j,a_j))$.
The definition of $G^{(j)}$ implies that $G^{(j)}\diag(1,1,a_j,a_j)=\diag(1,a_j)G^{(j)}$, and thus
\[
   \rho^{(j)}(\epsilon_j(V))=\rk\big(G^{(j)}\diag(1,1,a_j,a_j)Y\T\big)
         =\rk\big(\diag(1,a_j)G^{(j)}Y\T\big)=\rk\big(G^{(j)}Y\T\big)=\rho^{(j)}(V).
\]
With the aid of \eqref{e-FPrime} we conclude that $[\epsilon_j](\cF(\cN^{(j)}))=\cF(\cN^{(j)})$ for all $j\in\Omega$.
Now we are ready to return to the proof of \cref{T-NoCoprq2} and specifically to the set
$ \cF'=\bigcup_{j\in\Omega}\cF(\cN^{(j)})$.
We claim that $\cF'\subseteq\cF(\cM)$.
Indeed, let $F\in\cF'$.
Then $F\in\cF(\cN^{(j)})$ for some $j\in\Omega$ and thus $[\epsilon_j](F)\in\cF(\cN^{(j)})$.
Since $[\epsilon_j]$ is strong, we conclude that $F=[\epsilon_j^{-1}]([\epsilon_j](F))$ is a flat in~$\cM$.
Now the rest of the proof of \cref{T-NoCoprq2}, starting at~\eqref{e-Fprime}, generalizes to $\cL$-classes as described above, and
leads to a contradiction.
\\[.5ex]
4) $(q,\Delta)=(2,\textsf{l-w})$.
Note that for $q=2$ linear maps are $\cL$-equivalent if and only if they are equal (see \cref{P-EquivMaps}(e)).
Therefore the result from \cref{T-DirSumCoPr} remains valid for \qMat$^{[\textsf{l-w}]}$ if $q=2$.
\\[.5ex]
5) $q>2$ and $\Delta=\textsf{l-w}$.
Following the proof of  \cref{T-DirSumCoPr}, we see that the existence of a coproduct implies the existence of a unique $\cL$-class
$[\epsilon]$ satisfying $[\epsilon]\circ[\iota_i]=[\alpha_i]$ for $i=1,2$.
However, choosing the linear weak map~$\epsilon$ as in that proof, we can now take any linear map
$\epsilon'$ such that  $\epsilon'|_{E_1}=\lambda_1\epsilon|_{E_1}$ and $\epsilon'|_{E_2}=\lambda_2\epsilon|_{E_2}$ for some
$\lambda_1,\lambda_2\in\F^*$ and obtain a linear weak map satisfying $[\epsilon']\circ[\iota_i]=[\alpha_i]$ for $i=1,2$.
Choosing $\lambda_1\neq\lambda_2$ we obtain $[\epsilon']\neq[\epsilon]$, which proves that the uniqueness of the map~$[\epsilon]$
fails. Therefore $\cM_1\oplus\cM_2$ is not a coproduct of~$\cM_1$
and~$\cM_2$ in \qMat$^{[\textsf{l-w}]}$.
A  similar reasoning shows that the two \qM{}s do not have any coproduct in \qMat$^{[\textsf{l-w}]}$.
\mbox{}\hfill$\square$

\section*{Open Problems}
\begin{arabiclist}
\item In \cite{GLJ22Rep} it was shown that the direct sum of two representable \qM{}s $\cM_1$ and~$\cM_2$ is not necessarily representable.
However, if $\cM=\cM_1\oplus\cM_2$ is representable, then it must represented via a block diagonal matrix $G := \text{diag}(G_1, G_2)$, where
$G_i$ represents~$\cM_i$. Establishing representability of \qM{}s is a notoriously difficult problem.
It is therefore interesting to ask if the property of the direct sum established in \cref{R-DirSumMax} can help understand when a direct sum is representable?
\item Of course, many other categorical questions can be posed, such as the existence of products, equalizers, pullbacks and pushouts.
        The article \cite{HePa18}, which discusses properties of the category of matroids with strong maps, appears to be a very useful guideline.
        Considering the results of this paper, one can expect further significant differences to the case of classical matroids.
\end{arabiclist}

\section*{Acknowledgement}
Heide Gluesing-Luerssen was partially supported by the grant \#422479 from the Simons Foundation.
We wish to thank the referees for their careful reading and constructive comments.

\bibliographystyle{abbrv}

\begin{thebibliography}{10}

\bibitem{Art57}
E.~Artin.
\newblock {\em Geometric Algebra}.
\newblock Interscience Publishers, Inc., New York, 1957.

\bibitem{BCIJS23}
E.~Byrne, M.~Ceria, S.~Ionica, R.~Jurrius, and E.~Sa{\c{c}}ikara.
\newblock Constructions of new matroids and designs over {$\text{GF}(q)$}.
\newblock {\em Des. Codes Cryptogr.}, 91:451--473, 2023.

\bibitem{BCJ22}
E.~Byrne, M.~Ceria, and R.~Jurrius.
\newblock Constructions of new $q$-cryptomorphisms.
\newblock {\em J. Comb. Theory. Ser. B}, 153:149--194, 2022.

\bibitem{CeJu21}
M.~Ceria and R.~Jurrius.
\newblock The direct sum of \mbox{$q$}-matroids.
\newblock Preprint 2021. arXiv: 2109.13637v4.

\bibitem{CrRo70}
H.~Crapo and G.-C. Rota.
\newblock {\em On the Foundations of Combinatorial Theory: {C}ombinatorial Geo-
  metries}.
\newblock MIT Press, 1970.

\bibitem{DaPr02}
B.~A. Davey and H.~A. Priestley.
\newblock {\em Introduction to Lattices and Order}.
\newblock Cambridge University Press, Cambridge, 2nd edition, 2002.

\bibitem{GLJ22Ind}
H.~Gluesing-Luerssen and B.~Jany.
\newblock Independent spaces of $q$-polymatroids.
\newblock {\em Algebr. Comb.}, 5:727--744, 2022.

\bibitem{GLJ22Gen}
H.~Gluesing-Luerssen and B.~Jany.
\newblock $q$-{P}olymatroids and their relation to rank-metric codes.
\newblock {\em J. Algebr. Comb.}, 56:725--753, 2022.

\bibitem{GLJ22Rep}
H.~Gluesing-Luerssen and B.~Jany.
\newblock Representability of the direct sum of $q$-matroids.
\newblock arXiv: 2211.11626, 2022.

\bibitem{GLJ23DSCyc}
H.~Gluesing-Luerssen and B.~Jany.
\newblock Decompositions of $q$-matroids using cyclic flats.
\newblock arXiv: 2302.02260, 2023.

\bibitem{GJLR19}
E.~Gorla, R.~Jurrius, H.~L{\'o}pez, and A.~Ravagnani.
\newblock Rank-metric codes and $q$-polymatroids.
\newblock {\em J. Algebraic Combin.}, 52:1--19, 2020.

\bibitem{Har08}
R.~Hartshorne.
\newblock Publication history of von {S}taudt's {G}eometrie der {L}age.
\newblock {\em Arch. Hist. Exact Sci.}, 62:297--299, 2008.

\bibitem{HePa18}
C.~Heunen and V.~Patta.
\newblock The category of matroids.
\newblock {\em Appl. Categ. Struct.}, 26:205--237, 2018.

\bibitem{Ja22}
B.~Jany.
\newblock The projectivization matroid of a $q$-matroid.
\newblock arXiv: 2204.01232, 2022.

\bibitem{JuPe18}
R.~Jurrius and R.~Pellikaan.
\newblock Defining the $q$-analogue of a matroid.
\newblock {\em Electron. J. Combin.}, 25:P3.2, 2018.

\bibitem{Ox11}
J.~Oxley.
\newblock {\em Matroid Theory}.
\newblock Oxford Graduate Text in Mathematics. Oxford University Press, 2nd
  edition, 2011.

\bibitem{Put}
A.~Putman.
\newblock The fundamental theorem of projective geometry.
\newblock Available at www3.nd.edu/ $\tilde{\;}$andyp/notes/FunThmProjGeom.pdf.

\bibitem{Rom08}
S.~Roman.
\newblock {\em Lattices and Ordered Sets}.
\newblock Springer, 2008.

\bibitem{Shi19}
K.~Shiromoto.
\newblock Codes with the rank metric and matroids.
\newblock {\em Des. Codes Cryptogr.}, 87:1765--1776, 2019.

\bibitem{Wh86}
N.~White.
\newblock {\em Theory of Matroids}.
\newblock Cambridge University Press, Cambridge, 1986.

\end{thebibliography}

\end{document}